\newcommand{\cmg}{{m_g}}
\newcommand{\cmf}{{m_f}}
\newcommand{\inte}[1]{{\overset{\circ}{#1}}}
\newcommand{\var}{\vartheta}
\newcommand{\vp}{\varphi}
\newcommand{\Equivalent}{\Longleftrightarrow}
\def\eps{\varepsilon}
\newcommand{\Z}{\mathbb{Z}}
\newcommand{\BB}{\mathbb B}
\newcommand{\N}{\mathbb{N}}
\newcommand{\R}{\mathbb{R}}
\newcommand{\XX}{\mathbb{X}}
\newcommand{\mt}{\theta}
\newcommand{\yxuk}{y_k^{x,\bu}}
\newcommand{\yxuN}{y_N^{x,\bu}}
\newcommand{\U}{\mathbb{U}}
\newcommand{\bx}{{\bf x}}
\newcommand{\by}{{\bf y}}
\newcommand{\bz}{{\bf z}}
\newcommand{\br}{{\bf r}}
\newcommand{\bbm}{{\bf m}}
\def\bq{{\boldsymbol q}}
\def\bu{{\boldsymbol u}}
\def\bv{{\boldsymbol v}}
\newcommand{\be}{\begin{eqnarray}}
\newcommand{\ee}{\end{eqnarray}}
\newcommand{\beno}{\begin{eqnarray*}\nonumber}
\newcommand{\eeno}{\nonumber\end{eqnarray*}}
\newcommand{\barr}[1]{\begin{array}{#1}}
\newcommand{\earr}{\end{array}}
\newcommand{\converge}{\rightarrow}
\newcommand{\Hyp}[1]{{\rm{(${\bf H_{#1}}$)}}}
\newcommand{\Hypu}[1]{\rm{($\bf \widetilde{H}_{1}$)}}
\newcommand{\cS}{\mathcal{S}}
\newcommand{\cT}{\mathcal{T}}
\newcommand{\cR}{\mathcal{R}}
\newcommand{\cC}{\mathcal{C}}
\newcommand{\cH}{\mathcal{H}}
\newcommand{\cU}{\mathcal{U}}
\newcommand{\cD}{\mathcal{D}}
\newcommand{\cK}{\mathcal{K}}
\newcommand{\cV}{\mathcal{V}}
\newcommand{\cW}{\mathcal{W}}
\newcommand{\VV}{\cV}
\newcommand{\WW}{\cW}
\definecolor{ggreen}{cmyk}{1,0,.8,0.5}
\newcommand{\comment}[1]{}
\newcommand{\bp}{{\boldsymbol p}}
\newcommand{\Co}{\text{co}} 
\theoremstyle{plain}
\newtheorem{thm}{Theorem}[section]  
\newtheorem{definition}{Definition}[section]
\newtheorem{lem}{Lemma}[section]
\newtheorem{prop}{Proposition}[section]
\theoremstyle{definition}\newtheorem{rem}{Remark}[section]
\newcommand{\argmin}{\mbox{argmin}}
\title{\bf An algorithmic guide for finite-dimensional\\optimal control problems}
\author{Jean-Baptiste Caillau\thanks{Universit\'e C\^ote d'Azur, CNRS, Inria, LJAD (\texttt{jean-baptiste.caillau@univ-cotedazur.fr})}, Roberto Ferretti\thanks{Dipartimento di Matematica e Fisica, Universit\`a Roma Tre, Roma (\texttt{ferretti@mat.uniroma3.it})},
Emmanuel Tr\'elat\thanks{Sorbonne Universit\'e, CNRS, Universit\'e de Paris, Inria, Laboratoire Jacques-Louis Lions (LJLL), F-75005 Paris, France (\texttt{emmanuel.trelat@sorbonne-universite.fr}).},
Hasnaa Zidani\thanks{Insa Rouen Normandie, LMI (\texttt{hasnaa.zidani@insa-rouen.fr})}}
\date{}
\begin{document}

\maketitle
\begin{abstract}
\noindent We survey the main numerical techniques for finite-dimensional nonlinear optimal control. 
The chapter is written as a guide to practitioners who wish to get rapidly acquainted with the main numerical methods used to efficiently solve an optimal control problem. 
We consider two classical examples, simple but significant enough to be enriched and generalized to other settings:  Zermelo and Goddard problems. We provide sample of the codes used to solve them and make these codes available online.
We discuss direct and indirect methods, Hamilton--Jacobi approach, ending with optimistic planning.
The examples illustrate the pros and cons of each method,
and we show how these approaches can be combined into powerful tools for the numerical solution of optimal control problems for ordinary differential equations.
\end{abstract}

{\bf Keywords:} {optimal control, dynamical systems, Pontryagin maximum principle, direct and indirect methods, Hamilton-Jacobi-Bellman equation, optimistic planning}

%
\setcounter{tocdepth}{1}
\tableofcontents
\section{Introduction and statement of the problem}
\subsection{Brief overview}

Optimal control theory has been widely developed since many decades. The theoretical and numerical achievements, motivated by a body of diverse applications in various domains, provide valuable  insights into the
nature of optimal controls and the corresponding optimal trajectories.
In this chapter our objective is to provide practitioners with a guide to the most powerful but however easy-to-use methods and algorithms to solve efficiently a given nonlinear optimal control problem in finite dimension.

The most intuitive and popular numerical methods for solving an optimal control problem, called \emph{direct methods}, consist of {\em first discretizing then optimizing}. Such approaches have been investigated in a number of contributions (see, e.g., \cite{Betts}).  From the theoretical point of view, the efficiency  of these methods has been established for some classes of control problems (see \cite{BonnansLaurentVarin,GRKF,Hager2000,RossFahroo,SanzSerna}). From the numerical point of view, the direct approach benefits from the tremendous advances  in numerical optimization methods achieved in the last decades. As shown in Section \ref{sec_direct}, the direct approach is extremely easy to implement for general control problems with constraints on both the control variable and the state variable. However, in  general, direct methods may provide only locally optimal solutions and may require a fair initialization of the iterative process in optimization algorithms. Also, they may lack numerical accuracy, and may become computationally demanding in high dimension.

A major breakthrough in optimal control theory were achieved in the 1950’s by Pontryagin’s research group,
who successfully generalized and extended to a general nonlinear optimal control setting the classical Euler-Lagrange and Weierstrass conditions of
the Calculus of Variations. The early optimality conditions, called Pontryagin Maximum principle (in short, PMP), were subsequently strengthened and extended using methods of convex
and non-smooth analysis, or methods of differential geometry (see \cite{Pontryagin_book}). The PMP inspired effective computational schemes, like the shooting method presented in Section \ref{s4}.  As illustrated on some classical examples, the shooting method provides very accurate numerical solutions. The major drawback of this method is that it requires an {\em a priori} knowledge of the structure of the solution, as well as a good approximation of the adjoint state.  In addition, shooting methods are difficult to implement in state-constrained problems, in particular for a large number of state constraints.

Another major advance in optimal control was achieved in the 1950's by Richard Bellman, who provided a  description of how the minimum cost depends on initial conditions, as the solution to the so-called Hamilton--Jacobi--Bellman (HJB) partial differential equation. Whenever applicable, the Hamilton--Jacobi approach yields a global solution to the optimal control
problem, and provides the optimal control in a feedback form, suitable for many engineering applications. Despite these advantages, this 
approach suffers from the difficulty of computing the solution to the Hamilton--Jacobi equation in higher dimensions.
The numerical simulations presented in Section \ref{s5} show that the approximation of the HJB equation, even on coarse grids, provides solutions which depict well the qualitative structure of the optimal trajectories and may thus be used to guess the structure of optimal solutions. However, if an accurate computation of the optimal trajectories is required, the use of fine grids causes a strong increase in computational complexity.

More recently, other global methods have been developed for control problems, such as the optimistic planning (OP) algorithms introduced in Section \ref{s42}. These methods are based on a discretization of the control space and do not require any discretization of the state space. As a consequence, OP methods are particularly efficient in control problems with a low-dimensional control space; moreover, this approach seems also well suited for problems where the dynamics and the cost are given by learning models.
A preliminary analysis of the complexity of OP methods is now well established, but further developments are expected to make them more accurate, to possibly integrate the knowledge of the structure of trajectories, and also for a more efficient implementation. 

Throughout the chapter we consider two well known but representative optimal control examples, the Zermelo and the Goddard problem: both are quite simple but can be extended towards more intricate models. We use these two examples to illustrate the numerical methods and show how they can be rapidly and efficiently implemented, with up-to-date existing solvers.
We argue that the various approaches are complementary rather than in competition, and can be suitably combined to exploit their peculiarities.
For instance, HJB approach and direct methods can be used to obtain a rough estimate of adjoint state and cost, which could provide a good initial guess for the more accurate shooting method. Moreover, HJB solvers may allow to rule out local minima as it will be illustrated on the Zermelo problem with obstacle.

We hope that the codes that we provide, which are also available on the web, can serve as templates to readers interested in adapting them to their specific setting.

\paragraph{Notations.} 
Throughout the chapter, $\R$ denotes the set of real numbers,  
$\langle \cdot,\cdot\rangle$ and $\|\cdot\|$ denote respectively
the Euclidean inner product and the norm on $\R^N$(for any $N\geq 1$),
$\BB_N=\{x\in\R^N:\ \|x\|\leq 1\}$ is the closed unit ball 
(also denoted $\BB$ if there is no ambiguity) and $\BB(x;r)=x+r\BB$. 
For any set $S\subseteq\R^N$, $\inte{S}, \overline{S}$, $\partial S$, $\Co S$
denote its interior, closure, boundary, and convex envelope, respectively. \ For any $a,b\in \R$, we define
	$a\bigvee b:= \max(a,b).$
	Similarly, for $a_1,\cdots,a_m\in \R$, we define 
	$ \bigvee_{i=1}^{m}a_i:= \max(a_1,\cdots,a_m).$
The  notation $W^{1,1}([a,b])$ stands for the usual Sobolev space $\{f\in L^1([a,b]), f'\in L^1([a,b])\}$.
Finally, the abbreviation "w.r.t." stands for "with respect to", and "a.e." means "almost everywhere".

\subsection{Formulation of the optimal control problem}
Let $d,r\in\N^*$, let $T>0$ be
a fixed final time horizon and let $U$ be a compact subset of $\R^r$ (with $r\geq 1$).
We consider the finite-dimensional control system in $\R^d$ (for $0\leq t<T$)
\begin{eqnarray}\label{eq:etat}
 & & \dot \bx(s) = f(s,\bx(s), \bu(s)), \ \mbox{a.e.} \ s\in (t,T),
\end{eqnarray}
%
where the control input $\bu:[0,T]\longrightarrow \R^r$ ($r\geq 1$) is a measurable function such that $\bu(s)\in U$ 
for almost every~$s\in [0,T]$. Throughout the paper, we assume that

\medskip


\mbox{\Hyp{0}}
$\quad$ \text{$U$ is a compact subset of $\R^r$.}

\medskip

\noindent
We denote by $\cU$ the set of all admissible controls
$$\cU:=\{\bu:[0,T]\longrightarrow \R^r \mbox{ measurable, and }
\bu(s)\in U \mbox{ a.e.}\}.$$
The dynamics $f:[0,T]\times\R^d\times U\converge \R^d$ 
  satisfies: 
\begin{align*}
\hspace*{0.5cm} 
\mbox{{\Hyp{1a}}} \hspace*{0.5cm}
\begin{cases}
(i) \quad  f \text{ is continuous on }[0,T]\times\R^d\times U;\\
(ii)\quad \text{$x\rightarrow f(s,x,u)$ is locally Lipschitz continuous in the following sense:}\\
\phantom{(ii)\quad}
  \forall R>0,\ \exists k_R\geq 0,\ \forall (x,y)\in (\BB_{d}(0;R))^2,\ \forall (s,u)\in[0,T]\times U \\
  \phantom{(ii)\quad}
  \hspace{1cm}
  \|f(s,x,u)-f(s,y,u)\| \leq k_R \|x-y \|; \\
(iii)\  \exists c_f>0\text{ such that }\ \
  \|f(s,x,u)\|\  \leq\ c_f(1+\|x\|) \ \  \forall (s,x,u)\in[0,T]\times \R^d\times U.
\end{cases}\hspace*{3cm}
\end{align*}
\noindent The assumptions 
$(ii)$-$(iii)$ of \Hyp{1a} guarantee, 
for every $\bu\in \cU$ and $(t,x)\in[0,T]\times\R^d$, 
the existence of an absolutely continuous curve $\bx:[t,T]\to\R^N$
which satisfies \eqref{eq:etat} and the initial condition $\bx(t)=x$.
By the Gronwall lemma,
\begin{align}\label{eq:traj_gronwall}
1+ \|\bx(s)\|\leq(1+\|x\|)e^{c_f(s-t)}\qquad\forall s\in[t,T].
\end{align}
Given any $t\in [0,T]$ and any $x\in\R^d$,
the set of all admissible pairs 
control-and-trajectories starting at $x$ at time $t$ is denoted by
$$
  \XX_{[t,T]}(x):= 
  \{ (\bx,\bu) \in W^{1,1}(t,T)\times\cU\mid 
   \ (\bx,\bu) \mbox{ satisfies \eqref{eq:etat} with } \bx(t)=x \}.
$$
Throughout the chapter, we consider the (Bolza) optimal control problem  
\begin{equation}\left\{\begin{array}{l}
\mbox{minimize } \ 
   \displaystyle\varphi(\bx(T)) + \int_t^T \ell(s,\bx(s),\bu(s))\,ds, \\
 \hspace*{1cm} (\bx,\bu)\in \XX_{[t,T]}(x), 
\\
   \hspace*{1cm} g(\bx(s))\leq 0 \quad \mbox{for } s\in [t,T],\\
 \hspace*{1cm} g_f(\bx(T))\leq 0,
\end{array}\right. \label{eq.Pb_CO}
\end{equation}
with the convention that $\inf\emptyset=+\infty$. 
The distributed cost $\ell:[0,T]\times \R^d\times U\to \R$, the final cost $\varphi:\R^d\to\R$,
and the constraint functions $g$ and $g_f$ are given functions satisfying:
\begin{align*}
\hspace*{0.5cm} 
\mbox{{\Hyp{1b}}} \hspace*{0.5cm}
\begin{cases}
  (i) \quad  \ell \text{ is continuous on }[0,T]\times\R^d\times U;\\
(ii)\quad \text{$x\rightarrow \ell(s,x,u)$ is locally Lipschitz continuous in the following sense:}\\
\phantom{(ii)\quad}
  \forall R>0,\ \exists k_R\geq 0,\ \forall (x,y)\in (\BB_{d}(0;R))^2,\ \forall (s,u)\in[0,T]\times U \\
\phantom{(ii)\quad}
  \hspace{1cm}
  |\ell(s,x,u)-\ell(s,y,u)| \leq k_R \|x-y \|; \\
(iii)\  \exists c_\ell>0\text{ such that }\ \   |\ell(s,x,u)|  \leq\ c_\ell(1+\|x\|)\ \  \forall (s,x,u)\in[0,T]\times \R^d\times U.
\end{cases}\hspace*{3cm}
\end{align*}

\Hyp{2} $\varphi:\R^d\to \R$ is 
locally Lipschitz continuous
and there exists a constant $c_\varphi\geq 0$ such that 
$$|\varphi(y)|\leq c_\varphi(1+\|y\|).$$

\Hyp{3} The constraint functions 
$g:\R^d\to \R^\cmg$ and $g_f:\R^d\to \R^\cmf$ are 
locally Lipschitz continuous (with $\cmg,\cmf\geq 1$)
and there exists a constant $c_g\geq 0$ such that, for every $y\in \R^d$, 
$$\|g(y)\|+\|g_f(y)\|\leq c_g(1+\|y\|).$$
\noindent The so-called augmented control system
\begin{subequations} \label{eq:augmented_state}
\begin{eqnarray}
\dot\bx(s) & = & f(x,\bx(s),\bu(s))\quad \mbox{a.e. } s\in (t,T),\\
\dot\bz(s) & = & -\ell(s,\bx(s),\bu(s))\quad \mbox{a.e. } s\in (t,T),
\end{eqnarray}
\end{subequations}
is usually considered in optimal control theory to recast the Bolza problem in the Mayer form
\begin{equation}\left\{\begin{array}{l}
\mbox{minimize } \ 
   \displaystyle\varphi(\bx(T)) - \bz(T), \\
 \hspace*{1cm} (\bx,\bu)\in \XX_{[t,T]}(x_0), 
\\
   \hspace*{1cm} g(\bx(s))\leq 0 \quad \mbox{for } s\in [t,T],\\
 \hspace*{1cm} g_f(\bx(T))\leq 0,
\end{array}\right. \label{eq.Pb_CO_Reformulation}
\end{equation}

In what follows, we will {\bf sometimes} assume that the augmented dynamics satisfies the following assumption (convex epigraph):

\Hyp{4} For any $s\in [0,T]$ and every $x\in \R^d$, 
$$
  \bigg\{ \left(\begin{array}{c}
   f(s,x,u)\\ -\ell(s,x,u)+\eta \end{array}\right), \ \ u\in U,\ \ -c_\ell(1+\|x\|)+\ell(s,x,u)\leq \eta\leq 0 \bigg\}
  \quad \mbox{is a convex set.}
$$ 

\begin{rem} Note that if $\ell\equiv 0$ (Mayer problem), \Hyp{4} reduces to
  \begin{equation*} \label{eq:H4b}
  \mbox{$f(s,x,U)$ convex for all $(s,x)\in[0,T]\times \R^d$.}
 \end{equation*}
\end{rem}

\begin{rem}\label{rem.existence}
For $0\leq a<b \leq T$ and $(x,z)\in \R^d\times \R$, consider the set of all trajectories satisfying \eqref{eq:augmented_state} on the time interval $[a,b]$, for a control input $u\in \cU$, and starting at a position $(x,z)$ at time $a$:
\begin{eqnarray*}
  \cS_{[a,b]}(x,z):=\{ (\bx,\bz)\in W^{1,1}([a,b])\mid \exists \bu\in \cU
  \ \mbox{ such that } (\bx,\bz,\bu) \mbox{ satisfies } \eqref{eq:augmented_state}, \hspace*{1cm}& & \\
\mbox{ with } \bx(a)=x,\ \bz(a)=z\}. & & 
\end{eqnarray*}
\noindent Under assumptions \Hyp{0}, \Hyp{1a}, \Hyp{1b},  by \eqref{eq:traj_gronwall},
the set $\cS_{[a,b]}(x,z)$ is bounded in $W^{1,1}([a,b])$.  Moreover, if \Hyp{4} is satisfied then  $\cS_{[a,b]}(x,z)$ is a compact set in $W^{1,1}([a,b])$ endowed with the $C^0([a,b])$-topology (see
\cite[Theorem 1.4.1]{CELAUB}). 
Therefore, 
if there exists  a trajectory $\bx\in \cS_{[0,T]}(x_0,0)$ 
that satisfies the  constraints
$g(\bx(s))\leq 0$ for all $s\in [t,T]$ and $g(\bx(T))\leq 0$,
then the  control problem \eqref{eq.Pb_CO_Reformulation} has an optimal solution.
\end{rem}

\subsection{Examples} \label{s2}
In this section, we present two well-known examples that we are going to consider throughout. In our opinion, they illustrate nicely the most classical difficulties encountered in theoretical and numerical optimal control, and lend themselves to a number of more complicated variants. They are expected to serve as ``templates'' to the reader who aims at getting acquainted with the main issues in numerical optimal control.

\paragraph{Example 1: Zermelo problem} 
A boat with coordinates $\bx(t)=(\bx_1(t),\bx_2(t))$ navigates through a canal $\R\times[a,b]$, starting at 
$\bx(0)=x=(x_1,x_2)$, and wants to reach an island $\cC$ with minimal cost. The cost function may be an energy, the final time, etc. The control system is 
\begin{subequations}\label{eq:dyn_zermelo}
\begin{eqnarray}
  & & \dot \bx_1(t) =  \bv(t) \cos(\bu(t)) + h(\bx(t)),\\
  & & \dot \bx_2 (t) =  \bv(t) \sin(\bu(t)),
\end{eqnarray}
\end{subequations}
where $\bu(t) \in [0,2\pi]$ is the first control (angle), $\bv(t)\in [0,V_{max}]$ is a second control (speed of the boat), 
and $h(\bx(t))$ is the current drift (along the $x_1$-axis).
Because of the drift term (which can be greater than $V_{\max}$), the system may not be controllable.  Consider a  target $\cC:=\mathbb{B}(c,r_0)$ that is a   ball with radius $r_0\geq 0$ and centered at a given point $c$ located in the canal.  The target  represented by a function $g_f$ defined by 
$g_f(x):=\|x-c\|-r_0$  as 
$$x\in \cC \Longleftrightarrow g_f(x)\leq 0.$$
Consider also a set of constraints $\cK:=\{x\in \R^2,\ g(x)\leq 0\}$ where $g$ is a given function that is non-positive in a region where the boat can move and $g$ is positive in the location of the obstacles that the boat should avoid.   In this example, the cost function could be the time, or the energy, required to steer the boat  from a given position $x$ to the target $\cC$.  

The Zermelo problem has several variants depending on the choice of the dynamic $h$, on the expression of the constraints, as well as on the values of the different parameters entering the model ($a,b,$ and $V_{max}$). In the next sections, we will consider different settings to better illustrate the pros and cons of each numerical method. 

\paragraph{Example 2: Goddard Problem}
We consider the optimal control problem associated with a vertical ascension flight of 
a rocket,  known as Goddard problem. 
 The dynamics of the rocket is  defined with three state variables: $r$, the altitude, $v$ the relative speed  and $m$, the total mass. In general, a  dimensionless version for the motion equations is considered in the literature:
 \begin{subequations}
\be
  \dot \br(t) & = & \bv(t) \label{eq101}\\
  \dot \bv(t) & = & \frac{1}{m} (T_{max}\, \bu(t) - D(\br(t),\bv(t))) - \frac{1}{\br(t)^2} \\
  \dot\bbm(t) & = & - T_{max}\, b\, \bu(t) \label{eq103}
\ee 
\end{subequations}
where $D(r,v)$ is the drag force and $T_{max}\cdot u$ is the thrust force.
The  dimensionless initial state of the rocket is given by 
$\br(0)=1$, $\bv(0)=0$, $\bbm(0)=1$.
Here $\br(0)=1$ corresponds to the Earth's ground level and $m=1$ to the initial total mass of the rocket.
The motion of the rocket is controlled 
by the thrust factor $\bu(s)\in[0, 1] $ so that  the thrust force is on the interval $ T_{max}\cdot \bu(t)\in [0, T_{max}]$.
The drag  force is  a nonlinear function of $\br$ and $\bv$. Its expression depends on the choice of a model for the 
atmosphere and  on the structure of the rocket:
$D(r,v)= C_D \rho(r) |v| v$,
where $C_D$ is the drag coefficient of the rocket and $\rho$ is the atmospheric density.  In this section we 
consider  the  case of constant drag coefficient   (it depends in general on the Mach number)  and exponential model of the 
atmospheric density:
$$
  D(r,v)= C_D\ v^2\ e^{-\beta\,(r-1)} \quad 
  \forall r\geq 1, v\geq 0.
$$
This definition of the rocket's model has been widely studied in the literature. We take
$C_D=310.0$, $\beta=500.0$, $T_{max}=3.5$, $b=2.0$,
as in \cite{Tsiotras_Kelley_92} and other references. The corresponding model is then an approximation of real flight conditions for some rockets. 
The optimal control problem consists of  maximizing the final altitude $\br(t_f)$ at free final time $t_f>0$ under 
a final  constraint on the fuel consumption, and a pointwise constraint on the velocity. 
Finally, the optimal control problem is formulated as follows
\begin{equation}\label{Pb.Goddard}
\left\{\begin{array}{l}
\mbox{minimize } \ 
   \displaystyle\bigg(-\br(t_f)\bigg), \\
 \hspace*{0.5cm} (\br,\bv,\bbm) \mbox{ satisfies \eqref{eq101}--\eqref{eq103} with } \br(0)=1, \ \bv(0)=0, \ 
 \bbm(0)=1, 
\\
   \hspace*{0.5cm} 
    \bv(s)\leq v_{\max} \ \ \mbox{a.e. on } (0,t_f)\\
   \hspace*{0.5cm}  m^\ast-\bbm(t_f)\le 0 .
\end{array}\right. 
\end{equation}
In the numerical simulations that will be presented in the next sections, the limit of fuel consumption is  $m^\ast=0.6$ and the maximum velocity is $v_{\max}=0.1$.
\section{Direct methods: nonlinear programming}\label{sec_direct}

\subsection{Principle}
We call \emph{direct methods} all numerical methods consisting of numerically solving the optimal control problem as follows: without applying a priori any first-order necessary condition for optimality, we choose a discretization for the state and for the control, we choose a numerical scheme to discretize the control system (differential equation) and the cost functional (integral quadrature), so that the discretized optimal control problem is expressed as a family of nonlinear optimization problems in finite dimension, indexed by a discretization parameter $N$, of the form
\begin{equation}\label{pboptim}
\min_{Z\in \mathcal{C}} F(Z)
\end{equation}
where $Z=(x_1,\ldots,x_N,u_1,\ldots,u_N)$ and
\begin{equation}\label{defC}
\mathcal{C} = \{ Z\ \vert\  g_i(Z)=0,\ i\in{1,\ldots,r},\quad g_j(Z)\leq 0,\ j\in{r+1,\ldots,m} \} .
\end{equation}
This is a classical optimization problem under constraints in finite dimension, with a dimension growing as the discretization is refined. 
Of course, there exist an infinite number of variants to discretize the problem and ending up with a problem of the form \eqref{pboptim}. We discuss hereafter several classes of discretizations. Once this transcription has been done, one can then implement a number of various optimization routines to solve \eqref{pboptim}.

Let us first explain hereafter one of the simplest possible discretizations.
Consider the optimal control problem \eqref{eq.Pb_CO} with $t=0$ as initial time.
Consider a subdivision $0=t_0<t_1<\cdots<t_N=T$ of the interval $[0,T]$. Controls are discretized on $U$-valued piecewise constant functions on this subdivision. To discretize the control system, we choose the explicit Euler method: setting $h_i=t_{i+1}-t_i$, we have $x_{i+1}=x_i+h_if(t_i,x_i,u_i)$ for $i=0,\ldots,N-1$. To discretize the integral cost, we choose the left rectangle method (which is equivalent to the explicit Euler method for the augmented system). We obtain the nonlinear programming problem ($x_0$ being known)
\begin{equation*}
\begin{split}
& \min C(x_1,\ldots,x_N,u_0,\ldots,u_{N-1}), \\
& x_{i+1}=x_i+h_if(t_i,x_i,u_i),\quad u_i\in U,\\
& g(x_{i+1})\leq 0, \qquad i=0,\ldots, N-1,\\
& g_f(x_N)\leq 0.
\end{split}
\end{equation*}

\noindent We note that this approach is flexible and robust insofar it is not much sensitive to the model (contrarily to the shooting method, described further): it is very easy to add various constraints to the optimal control problem. This is why direct methods are often privileged when the model is not completely fixed. The resulting numerical simulations often give an interesting feedback that may lead to change or adapt the optimal control model under consideration.

\subsection{Practical numerical implementation}
The numerical implementation of such a nonlinear programming problem is standard and can be done in a number of ways, for instance using a penalty method or a sequential quadratic programming (SQP) method or dual methods (like Uzawa's). 
A survey on the use of direct methods in optimal control, with a special interest to applications in aerospace, can be found in \cite{Betts}.

From the point of view of practical implementation, in the last years much progress has been done in the direction of combining automatic differentiation softwares (such as the modelling language \texttt{AMPL}, see \cite{AMPL}, or \texttt{CasADi}, see \cite{Andersson2018}) with expert optimization routines (such as the open-source package \texttt{Ipopt}, see \cite{IPOPT}, providing an interior point optimization algorithm for large-scale differential algebraic systems combined with a filter line-search method). 
With such tools, it has become very simple to implement with only few lines of code difficult (nonacademic) optimal control problems, with success and within a reasonable time of computation. Websites such as \texttt{NEOS} (\href{http://www.neos-server.org/neos/solvers/index.html}{www.neos-server.org/neos/solvers}) propose to launch online such kinds of computation: codes can be written in a modelling language such as \cite{AMPL} (or others) and can be combined with many optimization routines (specialized either for linear problems, nonlinear, mixed, discrete, etc). The advantage of using \texttt{NEOS} is that one has nothing to install on his own machine, and moreover one can test a large number of possible optimization routines.

Note that there exist a large number (open-source or not) of automatic differentiation softwares and of optimization routines. It is not our aim to provide a list of them, since they are easy to find on the web.
Note also that \texttt{AMPL}, which is very easy and friendly to use, is however not free of charge (although the licence is not expensive) and that \texttt{CasADi} offers a very good and efficient free alternative, see \href{https://web.casadi.org}{web.casadi.org}.

\subsection{Variants}
As alluded above, there exist many possible approaches to discretize an optimal control problem, see \cite{Betts} where the important sparsity issues are also discussed. Among those various approaches, we quote the following.

Collocation methods consist of choosing specific points or nodes on every subinterval of a given subdivision of the time interval. Such methods approximate the trajectories and the controls by polynomials on each subinterval. The collocation conditions state that the derivatives of the approximated state match with the dynamics at each node. 

Spectral and pseudospectral methods are another class in which the above nodes are chosen as the zeros of specific polynomials such as Gauss-Legendre or Gauss-Lobatto polynomials. Such polynomials are used as a basis to approximate trajectories and controls in appropriate approximation spaces. 
Since they share nice orthogonality properties, the collocation conditions turn into constraints that are easily tractable for numerical purposes. We refer the reader to \cite{ElnagarKazemi,GRKF,RossFahroo} and to the references therein for more details.

There exist also some probabilistic approaches, such as the method described in \cite{HLPT} which consists of first relaxing the optimal control problem in measure spaces and then of seeking the optimal control as an occupation measure, which is approximated by a finite number of its moments (see  \cite{Lasserre}). This approach relies on algebraic geometry tools and reduces the optimal control problem to some finite dimensional optimization problem involving linear matrix inequalities (LMI).
On this topic involving Sums-of-Square (SoS) considerations, we refer the reader to another chapter of the Handbook, \cite{Lasserre_Handbook}.

\begin{rem}
Direct methods are characterized by first discretizing and then optimizing, i.e., optimality conditions are applied in a second step to the discretized model; in contrast to this approach, indirect methods (to be discussed in the next section) consist of applying first optimality conditions (the Pontryagin Maximum Principle) and then discretizing the resulting boundary value problem.
While the latter method clearly falls in the classical Lax scheme, ``consistency plus stability imply convergence'', there is a serious gap there in the direct approaches: to ensure convergence using the Lax scheme, one would a priori need a \emph{uniform} (with respect to $N$) consistency property, which fails in general because the optimal control problem is an optimization problem in infinite dimensions. 
Surprisingly simple examples of divergence are provided in \cite{Hager2000}.
In this same paper, it is shown that convergence is obtained for ``smooth enough'' optimal control problems without any final state constraint, discretized with Runge--Kutta methods with positive coefficients. 
We also refer to \cite{BonnansLaurentVarin,SanzSerna} for further comments and considerations on symplectic integrators.
Convergence has also been established for classes of Legendre pseudospectral methods (see \cite{ElnagarKazemi,GRKF,RossFahroo}).
\end{rem}

\subsection{Goddard problem by a direct approach} \label{s32}
To illustrate the method, we present a treatment of the Goddard case presented Section~\ref{s2} using the Crank-Nicolson scheme. This scheme, which is \emph{dual} to the midpoint rule (in the sense defined in \cite{hairer-2006a}), has the advantage of using only gridpoints (contrary to the midpoint scheme). It is very easy to implement and the code below is a basic ``do it yourself'' direct solver that can easily be adapted to other problems. We use the nice \texttt{JuMP} interface in \texttt{julia} to define the discretization of the problem and call the celebrated interior point solver \texttt{Ipopt} previously mentioned. The symbolic-numeric framework put forward by \texttt{Julia} allows to efficiently and transparently use AD (automatic differentiation / differentiable programming) and sparse linear algebra (which is important for structured constraints stemming from one-step like methods to discretize the dynamics).
In the spirit of reproducible research \cite{donoho-1995a}, the code itself is available and executable online on the gallery of the \texttt{ct:\,control toolbox} project.\footnote{\href{https://ct.gitlabpages.inria.fr/gallery}%
{ct.gitlabpages.inria.fr/gallery}}
Discretizing the dynamics boils down to the following lines (note that nonlinear expressions encoding the right-hand side are first defined):
\begin{verbatim}
# Dynamics
@NLexpressions(sys, begin
    # D = Cd v^2 exp(-β(r-1))
    D[i = 1:N+1], Cd * v[i]^2 * exp(-β * (r[i] - 1.0))
    # r'= v
    dr[i = 1:N+1], v[i]
    # v' = (Tmax.u-D)/m - 1/r^2
    dv[i = 1:N+1], (Tmax*u[i]-D[i])/m[i] - 1/r[i]^2
    # m' = -b.Tmax.u
    dm[i = 1:N+1], -b*Tmax*u[i]
end)

# Crank-Nicolson scheme
@NLconstraints(sys, begin
    con_dr[i = 1:N], r[i+1] == r[i] + Δt * (dr[i] + dr[i+1])/2.0
    con_dv[i = 1:N], v[i+1] == v[i] + Δt * (dv[i] + dv[i+1])/2.0
    con_dm[i = 1:N], m[i+1] == m[i] + Δt * (dm[i] + dm[i+1])/2.0
end)
\end{verbatim}
Note that the constraints have been labeled so that the corresponding Lagrange multipliers can be retrieved. They are indeed approximations of the costate of the continuous problem and will be used as such to initialize successfully a subsequent shooting method (see Section~\ref{s4}).

A strong benefit of direct methods is that state constraints are very easy to handle. We may for instance add a state constraint on the velocity (note that some other constraints have been added, directly when defining the unknowns of the problem, to improve convergence of the solver; what would be a complication for indirect methods is actually an asset here):
\begin{verbatim}
# As final time is free, time step Δt is unknown
@variables(sys, begin
    0.0 ≤ Δt
    r[1:N+1] ≥ r0
    0 ≤ v[1:N+1] ≤ vmax
    mf ≤ m[1:N+1] ≤ m0
    0.0 ≤ u[1:N+1] ≤ 1.0
end)
\end{verbatim}
Boundary constraints are obviously added in the same way, and for $N=100$ gridpoints we get the results given on Figures~\ref{fig31} and \ref{fig32}. The resulting optimization problem solved has about $400$ variables (the state is of dimension $3$ and the control is scalar) and $300$ equality constraints (the discretized dynamics), plus box constraints.
One could use a finer grid (up to the price of a larger problem), but the result we obtain turns to be precise enough to trigger convergence of much more accurate solver by shooting in the next section. The bang-singular-constrained-bang structure of the solution has indeed been captured by the direct solver, and this is essentially all we need to resort to indirect methods to complete the computation.

\begin{figure}
    \centering
    \includegraphics[width=0.7\textwidth]{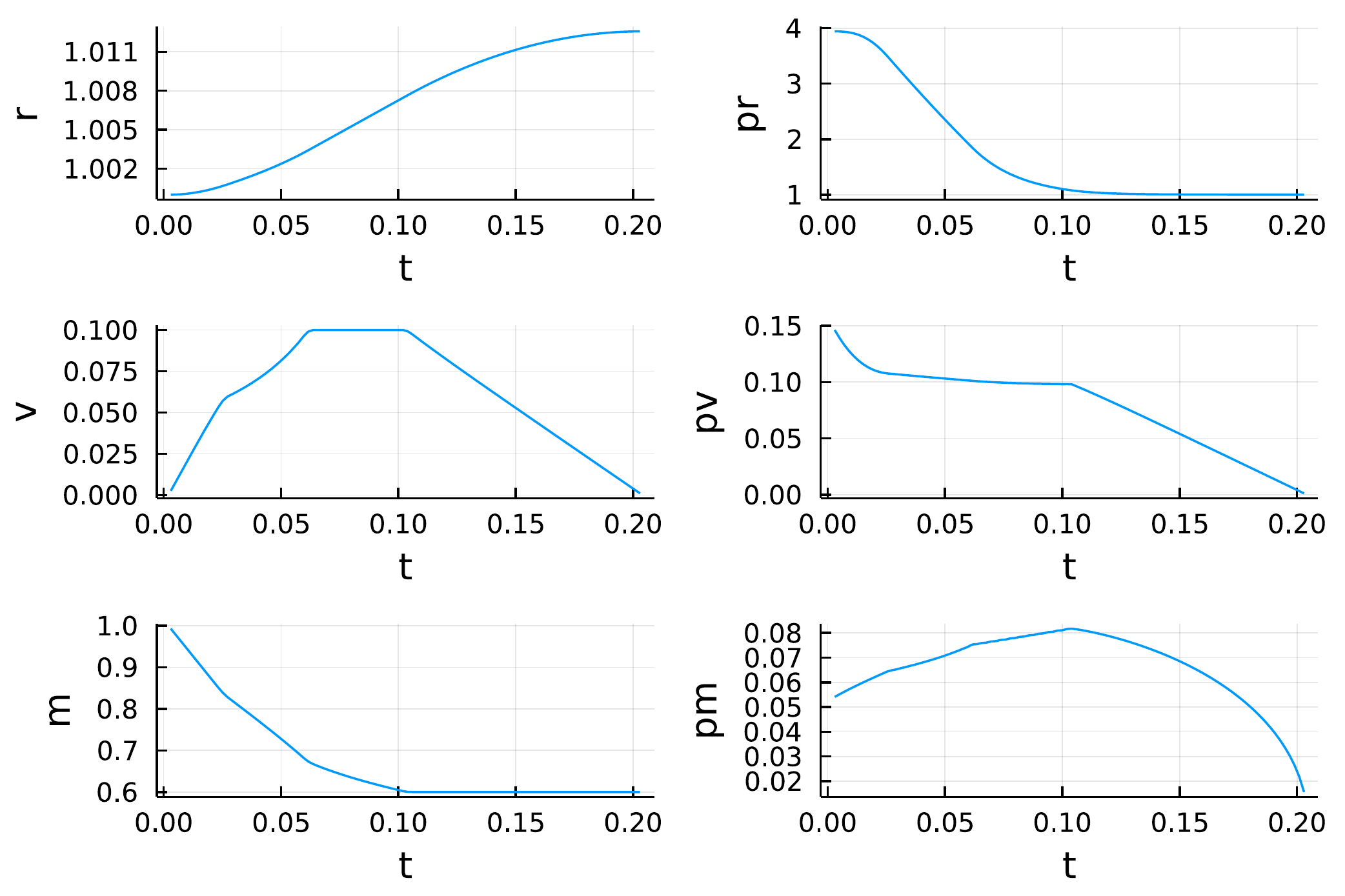}
    \caption{\small Goddard problem: result of the direct code (Crank-Nicolson scheme), states and Lagrange multipliers associated with the equality constraints that discretize the dynamics. These multipliers approximate the adjoint states and will be used as such to initialise an indirect method. A boundary arc is observed as the state constraint on the velocity is saturated.}
    \label{fig31}
\end{figure}

\begin{figure}
    \centering
    \includegraphics[width=0.7\textwidth,height=0.3\textheight]{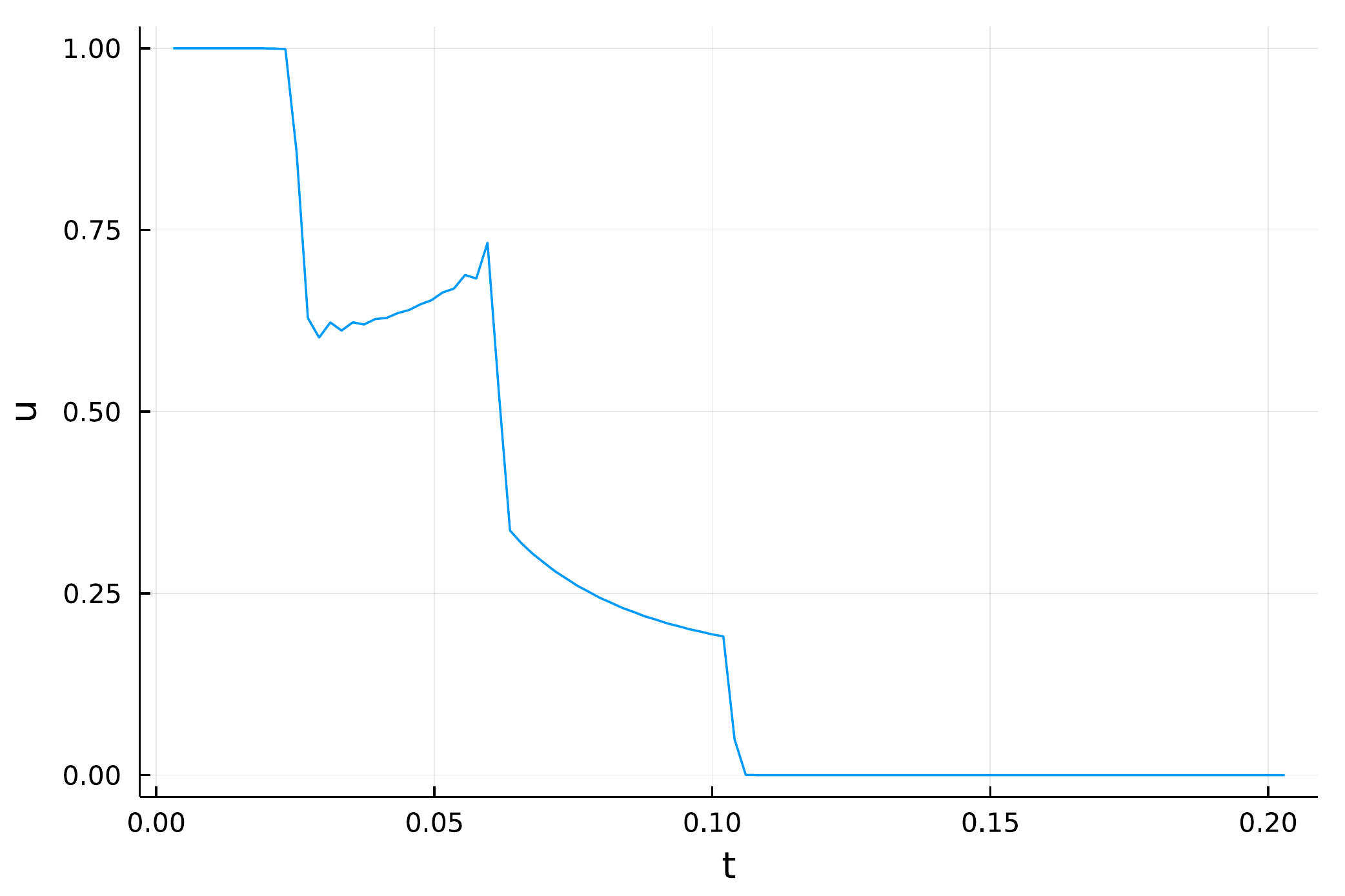}
    \caption{\small Goddard problem: result of the direct code, control values. A structure with four different subarcs is observed. A simple inspection indicates a concatenation of a bang arc ($u \simeq 1$), possibly of a singular arc interior to the control bounds ($u \in (0,1)$), of a boundary arc due to the state constraint on the velocity, and of a final bang arc ($u \simeq 0$). This structure information, of combinatorial nature, is obtained here without any \emph{a priori} knowledge on the solution. It is the key to define an appropriate shooting function and solve very accurately the problem thanks to an indirect method.}
    \label{fig32}
\end{figure}
\section{Indirect approaches: the shooting method} \label{s4}

\subsection{Pontryagin maximum principle}
In all this section, we assume that hypotheses \Hyp{0}-\Hyp{3} are satisfied, and that the functions 
 $\varphi, f, \ell $ and
$g_f$ are of class $C^1$ w.r.t. the space variable $x$.
Consider the optimal control problem \eqref{eq.Pb_CO} with $t=0$ as initial time, and with $g=0$, i.e., for the moment, with no state constraint. 
Recall that the \textit{end-point mapping}
of the system is defined by $E(x_0,T,\bu)=\bx(x_0,T,\bu)$, where $t\mapsto \bx(x_0,t,\bu)$ is the trajectory solution of the control system, corresponding to the control $\bu$, such that $\bx(x_0,0,\bu)=x_0$.
%
The end-point mapping is well defined and $C^1$ for $\bu$ belonging to a neighbourhood in $L^\infty([0,T],\R^r)$ of the reference control (and $C^k$  whenever the dynamics are $C^k$).
Denoting by $C$ the cost functional, the optimal control problem can be written as the infinite dimensional minimization problem of minimizing $C$ over the considered set of controls, under the constraint $g_f(E(x_0,T,\bu))\leq 0$ on the final state.

We first assume that we are in the simple case where the initial point $x_0$ is fixed, the final point $x_1$ is fixed, the final time $T$ is fixed, $U=\R^r$ and there are no state constraints along the trajectory. Then, the optimal control problem is exactly equivalent to the optimization problem
\begin{equation}\label{optpb1}
\min_{E(x_0,T,\bu)=x_1} C(T,\bu).
\end{equation}
If $\bu$ is optimal, then there must exist Lagrange multipliers
$(\psi,\psi^0)\in\left(\R^d\times\R\right)\setminus\{0\}$ such that
\begin{equation}\label{lagrmultrule}
\psi \cdot dE_{x_0,T}(\bu) = -\psi^0 dC_T(\bu).
\end{equation}
\noindent Defining the Lagrangian $L_T(\bu,\psi,\psi^0)=\psi E_{x_0,T}(\bu) + \psi^0 C_T(\bu)$, the first-order condition \eqref{lagrmultrule} is written as
\begin{equation}\label{1storderneccond}
\frac{\partial L_T}{\partial u}(\bu,\psi,\psi^0)= 0.
\end{equation}
The Pontryagin maximum principle (PMP), recalled hereafter, is a far-reaching statement of these conditions (see \cite{Pontryagin_book}, see also \cite{LeeMarkus}).
We recall here the statement of the PMP in the case without pointwise state constraints (that is for $g=0$).
\begin{thm}
If the trajectory $\bx$, associated to the optimal control $\bu$ on $[0,T]$, is optimal, then it must be the projection of an \textit{extremal} $(\bx,\bp,p^0,u)$ (called extremal lift), where $p^0\leq 0$ and $\bp:[0,T]\rightarrow\R^d$ is an absolutely continuous mapping called \textit{adjoint vector}, with $(\bp,p^0)\neq (0,0)$, such that
\begin{equation*} \label{systPMP}
\dot{\bx}(t)=\frac{\partial H}{\partial p}(t,\bx(t),\bp(t),p^0,\bu(t)),\
\dot{\bp}(t)=-\frac{\partial H}{\partial x}(t,\bx(t),\bp(t),p^0,\bu(t)),
\end{equation*}
almost everywhere on $[0,T]$,
where $H(t,x,p,p^0,u)=\langle p,f(t,x,u)\rangle+p^0\ell(t,x,u)$ is the Hamiltonian, and the condition
\begin{equation} \label{contraintePMP}
H(t,\bx(t),\bp(t),p^0,\bu(t))=\max_{v\in U} H(t,\bx(t),\bp(t),p^0,v)
\end{equation}
holds almost everywhere on $[0,T]$.
If moreover the final time $T$ is not fixed, then 
\begin{equation}\label{condannul}
\max_{v\in U} H(T,\bx(T),\bp(T),p^0,v)=0.
\end{equation}
Additionally, if $g_f(x(T))=0$, then the adjoint vector can be built in order to satisfy the transversality condition
\begin{equation}\label{condt2}
\bp(T)-p^0\nabla\varphi(\bx(T))\ \bot\ \ker dg_f(\bx(T)),
\end{equation}
where $dg_f(x)$ stands for the Jacobian of $g_f$ at point $x$.
\end{thm}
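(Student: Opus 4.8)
The plan is to follow the classical needle-variation route, working in the Mayer reformulation already set up in \eqref{eq:augmented_state} so that the running cost is absorbed into an extra state coordinate. Writing $\tilde{\bx}=(\bx,\bz)$ with dynamics $\tilde f(t,\tilde x,u)=(f(t,x,u),-\ell(t,x,u))$, the Bolza cost of \eqref{eq.Pb_CO} becomes the terminal value $\varphi(\bx(T))-\bz(T)$, and the PMP for the original problem will follow from the PMP for this Mayer problem by reading off the last coordinate of the adjoint vector, which is constant along the flow (the dynamics do not depend on $\bz$) and will play the role of the abnormal multiplier $p^0\le 0$. The first step is then to linearize the flow: for a fixed $\bu$ with trajectory $\bx$, the end-point mapping $E(x_0,T,\cdot)$ is $C^1$ near $\bu$ in $L^\infty$, and its derivative in a direction $\delta\bu$ is $w(T)$, where $w$ solves the variational equation $\dot w=\partial_x f\,w+\partial_u f\,\delta\bu$ with $w(0)=0$. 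Introducing $\bp$ as the solution of the backward adjoint equation $\dot{\bp}=-\partial_x H$ makes $t\mapsto\langle\bp(t),w(t)\rangle$ absolutely continuous with a computable derivative; this is the device that converts terminal data on $\bp(T)$ into pointwise-in-time statements. In the interior case $U=\R^r$ leading to \eqref{lagrmultrule}, this alone turns the Lagrange stationarity \eqref{1storderneccond} into $\partial_u H=0$ a.e., the infinitesimal form of \eqref{contraintePMP}.

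To obtain the full maximum condition for a general compact $U$, I would use spike (\emph{needle}) variations. At a Lebesgue point $\tau$ of $\bu$ and for any $v\in U$, replacing $\bu$ by $v$ on $[\tau-\varepsilon,\tau]$ perturbs the endpoint by $\varepsilon\,w_\tau(T)+o(\varepsilon)$, where $w_\tau$ is propagated by the variational flow from the initial value $f(\tau,\bx(\tau),v)-f(\tau,\bx(\tau),\bu(\tau))$. The closure of the nonnegative combinations of all such vectors forms the Pontryagin cone $K$ at $\bx(T)$, and a concatenation argument shows $K$ is convex. Optimality, together with the terminal constraint $g_f\le 0$, forbids the cost-decreasing direction from lying interior to $K$ intersected with the admissible terminal directions; separating this direction from $K$ by a hyperplane produces a nonzero covector. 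Taking $\bp(T)$ to be its $\bx$-component and $p^0\le 0$ its $\bz$-component, and propagating $\bp$ backward, the separation inequality read at each $\tau$ is exactly $H(\tau,\bx(\tau),\bp(\tau),p^0,v)\le H(\tau,\bx(\tau),\bp(\tau),p^0,\bu(\tau))$, i.e. \eqref{contraintePMP}, with nontriviality $(\bp,p^0)\neq 0$ inherited from the separating covector.

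It remains to treat the two refinements. When the final time is free, I would enlarge the set of admissible terminal displacements by the time-variation direction: perturbing $T$ by $\delta T$ shifts the endpoint along $\pm f(T,\bx(T),\bu(T))$, so the separating hyperplane must be compatible with both signs, and reading this through $\bp$ yields $\max_{v\in U}H(T,\bx(T),\bp(T),p^0,v)=0$, which is \eqref{condannul}. For the transversality condition \eqref{condt2}, when $g_f(\bx(T))=0$ the admissible terminal directions are, to first order, those in $\ker dg_f(\bx(T))$; the separating covector must annihilate this subspace once the terminal-cost contribution $p^0\nabla\varphi(\bx(T))$ is accounted for, giving $\bp(T)-p^0\nabla\varphi(\bx(T))\ \perp\ \ker dg_f(\bx(T))$.

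The main obstacle is the construction and exploitation of the Pontryagin cone in the second step: one must prove that the first-order endpoint displacements produced by finitely many simultaneous needle variations combine convexly with a remainder that is $o(\varepsilon)$ \emph{uniformly} in the variation parameters (uniformity obtained from the local Lipschitz and sublinear-growth bounds of \Hyp{1a} together with the Gronwall estimate \eqref{eq:traj_gronwall}), and then run a topological nonseparation argument — a Brouwer-degree or open-mapping argument — to rule out that the optimal endpoint is interior to the reachable set in the cost-augmented space. Once the convexity of $K$ and the uniform remainder estimate are in hand, the separation and the backward propagation of $\bp$ are routine, and the interior case recovers $\partial_u H=0$ as a special instance of the maximum condition.
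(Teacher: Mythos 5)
The paper does not actually prove this theorem: it is recalled as a classical result, with the proof delegated to the cited references (Pontryagin et al.\ and Lee--Markus). Your proposal reconstructs precisely the classical needle-variation proof from those references, so the \emph{route} is the canonical one and there is nothing to compare against an in-paper argument. The outline is correct in all its main lines: the Mayer reformulation via \eqref{eq:augmented_state} with the $\bz$-component of the augmented adjoint supplying $p^0$, the variational equation and the backward adjoint making $\langle\bp(t),w(t)\rangle$ the bridge between terminal and pointwise information, spike variations generating the Pontryagin cone, separation giving $(\bp(T),p^0)\neq 0$ and \eqref{contraintePMP}, the time-variation direction giving \eqref{condannul}, and the active-constraint directions giving \eqref{condt2}.

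That said, as a proof it remains a sketch, and you have (commendably) located the gap yourself without closing it. Two points deserve emphasis. First, the convexity of $K$ is not the issue --- $K$ is convex by construction as the closed convex cone generated by the elementary displacements; the substantive claim is that every convex combination $\sum_j\lambda_j w_{\tau_j}(T)$ is itself attainable up to $o(\varepsilon)$ by a \emph{package} of simultaneous needles at distinct Lebesgue points, with a remainder uniform in $(\lambda_j,\tau_j,v_j)$; this uniformity (obtained from \Hyp{1a} and \eqref{eq:traj_gronwall}, as you say) is what licenses the passage to the limit in the separation argument, and it is the longest part of the classical proof. Second, the nonseparation step is genuinely topological: one must show that if the cost-decreasing, constraint-compatible direction were interior to $K$, a Brouwer fixed-point (or degree) argument would produce an admissible control strictly improving the cost, contradicting optimality; a purely linear "open mapping" heuristic does not suffice because the endpoint map is only approximately affine on the needle parameters. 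Until those two steps are executed, the proposal is an accurate table of contents for the proof in the cited references rather than a self-contained argument; everything else you write (the interior case $U=\R^r$ recovering \eqref{1storderneccond} as $\partial_uH=0$, the free-time and transversality refinements) is routine once they are in place.
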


The adjoint vector of the Pontryagin maximum principle can be constructed so that, up to a scaling, $(\bp(T),p^0)=(\psi,\psi^0)$ from \eqref{lagrmultrule}.
In particular, the Lagrange multiplier $\psi$ is unique (up to a multiplicative scalar) if and only if the trajectory $\bx$ admits a unique extremal lift (up to scaling).
The scalar $p^0$ is a Lagrange multiplier associated with the cost.
When $p^0<0$, the extremal is said to be \textit{normal}, and in this case, since the Lagrange multiplier is defined up to scaling, it is usual to normalize it so that $p^0=-1$. When $p^0=0$, the extremal is said to be \textit{abnormal}.
In many situations, where some qualification conditions hold, abnormal extremals do not exist in the problem under consideration, but in general it is difficult to guess whether, given some initial and final conditions, these qualification conditions hold or not.
In lack of control constraints, i.e., when $U=\R^r$, abnormal extremals project exactly onto singular trajectories. Recall that a couple $(\bx,\bu)$ defined on $[0,T]$ is said to be singular when the linearized control system along it is not controllable in time $T$; equivalently, in terms of the end-point mapping, the Fr\'echet differential $dE_{x_0,T}$ is not surjective. 
In the normal case, $\psi=\bp(T)$ coincides (up to a scaling) with the gradient of the value function (solution of the Hamilton--Jacobi equation). This point is further discussed in Section~\ref{sec-PMP-HJB}.

\begin{rem}
Generically, the solution of the optimal control problem is unique, and moreover it has a unique extremal lift. This well known fact is related to the differentiability properties of the value function (see for instance \cite{Aubin_90,cla-vin-87}, and see \cite{CannarsaSinestrari,RT2,RT1,Stefani} for results on the size of the set where the value function is differentiable).
\end{rem}

\begin{rem}
The fact that $p^0\leq 0$ is a convention due to Pontryagin, which leads to the maximum principle. The choice $p^0\geq 0$ would lead to a minimum condition, instead.
The component $p^0$ appears in the transversality condition \eqref{condt2}.
Note that, if the final point $x(T)$ is let free (i.e., there is no $g_f$) then this condition leads to $\bp(T)=p^0\nabla\varphi(\bx(T))$, and then necessarily $p^0\neq 0$ and we can normalize to $p^0=-1$.
\end{rem}

\begin{rem} \label{rem44}
When there are some state constraints $g(\bx(t))\leq 0$ along the trajectory, the Pontryagin maximum principle is modified as follows. We keep the same definition for the Hamiltonian $H$.
If $\bx$ is optimal then there must exist $p^0\leq 0$, an absolutely continuous adjoint vector $\bp(\cdot)$ and a nonnegative Radon measure $\mu$, the triple $(\bp,p^0,\mu)$ being nontrivial, such that the adjoint equation is
$$
\dot\bp(t) = -\frac{\partial H}{\partial x} (t,\bx(t),\bq(t),p^0,\bu(t))
$$
almost everywhere, with
$$
\bq(t) = \bp(t) + \int_{[0,t)} \frac{\partial g}{\partial x}(\bx(s))\, d\mu(s) ,
$$
(the interval being closed when $t=T$) and the maximization condition becomes
$$
H(t,\bx(t),\bq(t),p^0,\bu(t)) = \max_{v\in U} H(t,\bx(t),\bq(t),p^0,v)
$$
almost everywhere. Finally, in the transversality condition on the final adjoint vector, one replaces $\bp(T)$ with $\bq(T)$.
Note that, taking $\bq$ as a new adjoint, we have $d\bq(t)=d\bp(t)+\frac{\partial g}{\partial x}(\bx(t))\,d\mu(t)$ and, since $\dot\bp=-\frac{\partial H}{\partial x}$ and provided $\mu$ is absolutely continuous w.r.t. Lebesgue measure, one has
$$
\dot\bq = -\bq\frac{\partial f}{\partial x}-p^0 \frac{\partial f^0}{\partial x} + \nu \frac{\partial g}{\partial x}
$$
for some nonnegative $\nu$ ($d\mu=\nu dt$). This is the formulation that one can also find in the existing literature (possibly with an opposite sign for the state constraint, and obvious changes in the previous expressions).
\end{rem}

\subsection{Shooting method}
To compute optimal trajectories thanks to the Pontryagin Maximum Principle, the first step is to make explicit the maximization condition, at least when this is possible (otherwise this can be done numerically).
A usual assumption, often satisfied, is the strict Legendre assumption, requiring that
$\frac{\partial^2H}{\partial u^2}(t,x,p,p^0,u)$
is negative definite along the reference extremal. Under this assumption, an implicit function argument gives, locally, a control expressed as a function of the state and of the adjoint.
Let us assume, in what follows, that we are in the normal case, $p^0=-1$.
Plugging the resulting expression of the control in the Hamiltonian equations, and defining the \textit{reduced (normal) Hamiltonian} by $H_r(t,x,p)=H(t,x,p,-1,u(x,p))$, we obtain that every normal extremal is solution of the reduced Hamiltonian system
\begin{equation}\label{systreduit}
\dot{\bx}(t)=\frac{\partial H_r}{\partial p}(t,\bx(t),\bp(t)),\quad \dot{\bp}(t)=-\frac{\partial H_r}{\partial x}(t,\bx(t),\bp(t)).
\end{equation}

\begin{definition}\label{defexpmapping}
Denoting by $(\bx(t,x_0,p_0),\bp(t,x_0,p_0))$ the solution of \eqref{systreduit} starting at $(x_0,p_0)$ for $t=0$, the exponential mapping is defined by $\mathrm{exp}_{x_0}(t,p_0)=\bx(t,x_0,p_0)$.
\end{definition}
\noindent The exponential mapping parametrizes the normal extremal flow. 
The abnormal extremal flow can be parametrized as well, provided an appropriate Legendre assumption holds in the abnormal case.

\begin{rem} \label{rem45}
Let us give an example where the Hessian of the Hamiltonian is degenerate: the minimal time problem for single-input control affine systems $\dot{\bx}(t)=F_0(\bx(t))+\bu(t)F_1(\bx(t))$ without constraints on controls. In that case, the maximization condition implies that the bracket $\langle \bp(t),F_1(\bx(t))\rangle$ vanishes along the extremals, and the optimal control $\bu$ is singular. To compute it, we perform two successive derivations in time of the latter relation, obtaining $\langle \bp(t),[F_0,F_1](\bx(t))\rangle=0$ and  $\langle \bp(t),[F_0,[F_0,F_1]](\bx(t))\rangle+\bu(t)\langle \bp(t),[F_1,[F_0,F_1]](\bx(t))\rangle=0$, where $[\cdot,\cdot]$ is the Lie bracket of vector fields. We thus retrieve the control as a function of $x$ and $p$, provided that
$$ \langle \bp(t),[F_1,[F_0,F_1]](\bx(t))\rangle > 0, $$
which is the so-called strong generalized Legendre--Clebsch condition (see, e.g., \cite{BonnardChyba}).
Actually, under generic conditions on the vector fields, the above computation can always be performed (see \cite{CJT2006,CJT2008}). See Goddard example in this section for an example of this computation.
\end{rem}

\begin{rem}
When an abnormal flow can be well defined, we then have to deal with two extremal flows (and two exponential mappings).
In general, however, the abnormal flow ``does not fill much space''. For example, in \cite{Agrachev2009,RT1} it is proved that for control-affine systems without drift (satisfying the H\"ormander assumption), with quadratic cost, the image of the abnormal exponential mapping has an empty interior in the state space, and is even of zero Lebesgue measure under some additional assumptions.
\end{rem}

\begin{rem}
The Pontryagin Maximum Principle is a first-order necessary condition for optimality, asserting that if a trajectory is optimal then it should be sought among projections of extremals joining the initial point to the final target. This yields the shooting method that is described hereafter.
But, before coming to the description of that method, it is interesting to observe that, conversely, the projection of a given extremal may not be (locally or globally) optimal. This important observation has led to develop second-order optimality conditions in optimal control.

In terms of the Lagrangian (in the simplified setting), considering the intrinsic second order derivative $Q_T$ of the Lagrangian, given by
$$
Q_T=\frac{\partial^2 L_T}{\partial^2 u}(\bu,\psi,\psi^0)_{\vert \ker \frac{\partial L_T}{\partial u} },
$$
a second-order necessary condition for optimality is that $Q_T$ be nonpositive, and a second-order sufficient condition for \textit{local} optimality is that $Q_T$ be negative definite. Such conditions admit a number of generalizations for optimal control problems involving control and/or state constraints. 
It happens that, given a fixed extremal starting at $(x_0,p_0)$, in the simplified context and under appropriate generic assumptions, the quadratic form $Q_T$ is not degenerate (i.e., its kernel is trivial) if and only if the exponential mapping $\mathrm{exp}_{x_0}(t_c,\cdot)$ is not an immersion at $p_0$ (that is, its differential is not injective). This result, coming from symplectic considerations and generalizing the Riccati theory (see \cite{AgrachevSachkov,BFT}), yields to algorithms for computing the first conjugate time along a given extremal (see \cite{BCT2007} for a survey). By definition, the first conjugate time along an extremal is the first time $t_c$ at which the quadratic form $Q_{t_c}$ has a nontrivial kernel. This means that the trajectory $x(\cdot)$ under consideration is locally optimal (in $L^\infty$ topology) on $[0,t]$ if and only if $t<t_c$.
Computing a first conjugate time amounts to computing the first time at which some determinant along the extremal vanishes. More generally, the fact that the exponential mapping is not an immersion can be translated in terms of Jacobi fields, like in Riemannian geometry. 
\end{rem}

Let us now describe the contents of the shooting method.
The Pontryagin Maximum Principle states that every optimal trajectory is the projection of an extremal. After making explicit the maximization condition, the problem is reduced (for instance, in the normal case) to an extremal system of the form $\dot{\bz}(t) = F(t,\bz(t))$, where $\bz(t)=(\bx(t),\bp(t))$, and initial, final, transversality conditions, are written as $R(\bz(0),\bz(T))=0$. We thus end up with a boundary value problem (BVP) of the form
\begin{equation}\label{pbvallim}
\dot{\bz}(t)=F(t,\bz(t)),\quad R(\bz(0),\bz(T))=0.
\end{equation}
Denote by $\bz(t,z_0)$ the solution of the Cauchy problem $\dot{\bz}(t)=F(t,\bz(t))$, $\bz(0)=z_0$,
and set $G(z_0) = R(z_0,\bz(T,z_0))$. The boundary value problem \eqref{pbvallim} is then equivalent to solving $G(z_0)=0$, i.e., to finding a zero of the function $G$. 
By definition, the (single) shooting method consists of implementing a Newton-like method to find a zero of $G$.

The feasibility of the shooting method relies on the fact that the Jacobian of the mapping $G$ is nonzero. According to the above remark, in the simplified case, this determinant is nonzero, i.e., the (single) shooting method is well-posed at time $T$, locally around $p_0$, if and only if the exponential mapping $\mathrm{exp}_{x_0}(t,\cdot)$ is an immersion at $p_0$, that is, if and only if $T$ is not a conjugate time.
Although this result admits generalizations to a number of contexts (see, e.g., \cite{BonnansHermant2007,BonnansHermant2009}), there still misses a complete conjugate time theory involving state and control constraints, in which the trajectories may have bang, singular, boundary arcs.

The single shooting method can be refined to the multiple shooting method, in which one may add a number of intermediate nodes, thus incorporating new (matching) conditions in the shooting function $G$. This can be useful for instance to face numerical instability issues, or to implement the shooting method in bang-bang situations where one knows in advance the number of switchings.
Efficient shooting methods are available in the \texttt{HamPath} package%
\footnote{\href{https://www.hampath.org}{hampath.org}}, now encapsulated in the \texttt{Python} package \texttt{nutopy} of the \texttt{ct: control toolbox} project.\footnote{\href{https://ct.gitlabpages.inria.fr/gallery}{ct.gitlabpages.inria.fr/gallery}}
These open-source packages also contain implementations of conjugate time computations and of several homotopy routines that are particularly useful in a number of contexts (see \cite{Trelat_JOTA2012} for a survey on the use of continuation methods in optimal control).

\begin{rem}
Numerically, the shooting method is the combination of a numerical integration of a differential equation with a Newton method for finding a zero of a map (the shooting function). It thus inherits of the main features of a Newton method: when it converges, the convergence is extremely fast and the result is very accurate. However, it may be difficult to initialize successfully: finding a good initial guess for $z_0$, in the above notations, may be a real challenge. To face with this difficulty, several possible remedies are known, such as the following, surveyed in \cite{Trelat_JOTA2012} (see also the references therein):
\begin{itemize}
    \item Since direct methods are less sensitive to the initialization, it is often successful to first run a direct approach (even with a quite rough mesh) so that, if can obtain its convergence, then the corresponding optimal solution and Lagrange multiplier can be used as an approximation of the searched extremal. 
    \item Continuation and homotopy methods can be combined with the shooting method (and also, by the way, with direct methods): when a given problem happens to be difficult to solve, or quite ill-posed, because of some too restricted parameters or because of too constraining terms in the dynamics, one can try to relax the optimal control problem by adding some continuation parameters in front of those terms, then run a series of shooting methods with the continuation parameters ranging iteratively from $0$ to $1$ (with adaptive steps if necessary). 
    \item Geometric control gives useful information on the local or global structure of the optimal controls. For instance, one can guess in advance the number of switchings in a bang-bang strategy under some appropriate assumptions. This knowledge can then be combined with the Pontryagin maximum principle in order to drastically reduce the complexity of the shooting problem. 
\end{itemize}
Much more could be said on these classical issues, but since they have already been surveyed in \cite{Trelat_JOTA2012} we do not elaborate more. We next describe another powerful remedy that has emerged recently, although it relies on an old concept.
\end{rem}

\subsection{Turnpike property} \label{s43}
Assume that $f$ and $\ell$ do not depend on time.
In few words, the turnpike property stipulates that, for optimal control problems in large time, under mild assumptions it is expected that the optimal solution, the optimal control and the associated adjoint remain essentially close to static values, except at the beginning and at the end of the time interval. Moreover, these static values correspond to the optimal solution of a static optimization problem.
The idea is very easy to understand. When the time $T$ is large, setting $\varepsilon=1/T$ and $\tau=t/T=\varepsilon t$, the optimal control problem consists of determining a trajectory $y(\tau)=\bx(t)$ and a control $v(\tau)=\bu(t)$ solution of $\varepsilon y'(\tau)=f(y(\tau),v(\tau))$ and minimizing the cost $\int_0^1 \ell(y(\tau),v(\tau))\, d\tau$, under various constraints. At the formal level, we see that, when $\varepsilon\rightarrow 0$, at the limit we find the static optimization problem consisting of minimizing $\ell(y,v)$ under the constraint $f(y,v)=0$, i.e., under the constraint of being an equilibrium of the controlled dynamics. 

To give a more precise insight, let us establish the so-called exponential turnpike phenomenon in the linear-quadratic case. 
Let $x_0, x_1, \hat{x}\in\R^d$ and $\hat{u}\in\R^r$ be fixed.
We consider the optimal control problem in fixed final time $T>0$:
\begin{equation}\label{pbLQ}
\begin{split}
& \min \int_0^T \left( (\bx(t)-\hat{x})^\top Q(\bx(t)-\hat{x}) + (\bu(t)-\hat{u})^\top R (\bu(t)-\hat{u}) \right) dt\\
& \hspace*{5mm} 
\dot \bx(t) = A\bx(t) + B\bu(t) \\
& \hspace*{5mm} 
\bx(0)=x_0,\ \bx(T)=x_1 \\
\end{split}
\end{equation}
where $\bx(t)\in \R^d$ and $\bu(t)\in \R^r$, and where $Q$ et $R$ are real-valued symmetric positive definite matrices.
By strict convexity, there exists a unique optimal solution $(\bx_T,\bu_T(\cdot))$ of \eqref{pbLQ}. We assume that the pair $(A,B)$ satisfies the Kalman condition.
The Hamiltonian of the problem is $H(x,p,p^0,u) = \langle p,Ax\rangle+\langle p,Bu\rangle+p^0((x-\hat{x})^\top Q(x-\hat{x})+(u-\hat{u})^\top R(u-\hat{u}))$.
Let us prove that $p^0\neq 0$.
By contradiction, if $p^0=0$ then the condition $\frac{\partial H}{\partial u}=0$ yields $\langle \bp(t),B\rangle=0$, and by successive derivations and using the fact that $\dot \bp(t)=-A^\top \bp(t)$, we obtain $\langle \bp(t),A^kB\rangle=0$, which raises a contradiction with the Kalman condition since $\bp(t)\neq 0$.
We choose then to normalize the adjoint so that $p^0=-1/2$. The condition $\frac{\partial H}{\partial u}=0$ yields $\bu_T(t)=\hat{u}+R^{-1}B^\top \bp_T(t)$, and the extremal system is
\begin{align*}
\dot \bx_T(t) &= A \bx_T(t) + BR^{-1}B^\top \bp_T(t) + B\hat{u} \\
\dot \bp_T(t) &= Q\bx_T(t) - A^\top \bp_T(t) - Q\hat{x},
\end{align*}
i.e.,
$$
\frac{d}{dt} \begin{pmatrix} \bx_T(t)\\ \bp_T(t)\end{pmatrix}
= M \begin{pmatrix} \bx_T(t)\\ \bp_T(t)\end{pmatrix} + \begin{pmatrix} B\hat{u}\\ -Q\hat{x}\end{pmatrix},
$$
where
$$
M = \begin{pmatrix}
A & BR^{-1}B^\top \\
Q & -A^\top.
\end{pmatrix}.
$$
Besides, the static optimization problem is
\begin{equation*}
\min_{(x,u)\in\R^d\times\R^r,\ Ax+Bu=0} \left( (x-\hat{x})^\top Q(x-\hat{x}) + (u-\hat{u})^\top R(u-\hat{u}) \right).
\end{equation*} 
This strictly convex problem has a unique solution $(\bar x,\bar u)$, associated with a normal Lagrange multiplier $(\bar p,-1)$ (the problem is qualified because $\ker(A^\top)\cap\ker(B^\top)=\{0\}$, as a consequence of the Kalman condition). According to the Lagrange multiplier rule, which is here a necessary and sufficient condition for optimality, there exists $\bar p\in\R^d\setminus\{0\}$ such that $\bar u = \hat{u} + R^{-1}B^*\bar p$ and
\begin{equation}\label{extrLQ}
\begin{split}
A\bar x + BR^{-1}B^*\bar p + B\hat{u} &= 0 \\
Q\bar x - A^*\bar p - Q \hat{x} &= 0,
\end{split}
\end{equation}
i.e.,
$$
M \begin{pmatrix} \bar x\\ \bar p\end{pmatrix} + \begin{pmatrix} B\hat{u}\\ -Q\hat{x}\end{pmatrix} = \begin{pmatrix} 0\\ 0\end{pmatrix}.
$$
We have the following exponential turnpike property.

\begin{prop}
There exist constants $C>0$ and $\nu>0$, not depending on $T$, such that
\begin{equation}\label{expturnLQ}
\Vert \bx_T(t)-\bar x\Vert + \Vert \bu_T(t)-\bar u\Vert + \Vert \bp_T(t)-\bar p\Vert \leq C ( e^{-\nu t} + e^{-\nu (T-t)} )\qquad \forall t\in[0,T] .
\end{equation}
\end{prop}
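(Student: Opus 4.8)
The plan is to reduce \eqref{expturnLQ} to the hyperbolicity (exponential dichotomy) of the matrix $M$. Subtracting the static relation $M(\bar x,\bar p)^\top+(B\hat u,-Q\hat x)^\top=0$ from the extremal system, the deviation $\bze_T(t):=(\bx_T(t)-\bar x,\ \bp_T(t)-\bar p)$ solves the \emph{homogeneous} linear equation $\dot\bze_T=M\bze_T$ on $[0,T]$, subject to the split boundary conditions $\Pi_x\bze_T(0)=x_0-\bar x=:a$ and $\Pi_x\bze_T(T)=x_1-\bar x=:b$, where $\Pi_x:\R^d\times\R^d\to\R^d$ is the projection on the first factor. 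Crucially $a,b$ do not depend on $T$, and strict convexity of \eqref{pbLQ} guarantees that this two-point boundary value problem has a unique solution.

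First I would establish that $M$ is hyperbolic. A direct computation shows that $JM$ is symmetric, with $J=\left(\begin{smallmatrix}0&I\\-I&0\end{smallmatrix}\right)$, so $M$ is a Hamiltonian matrix and its spectrum is symmetric with respect to the imaginary axis. The Kalman condition together with $Q,R>0$ rules out purely imaginary eigenvalues (a standard fact of linear--quadratic theory: such an eigenvalue would force, through the quadratic structure, the vanishing of its eigenvector against the controllability Gramian). Hence $\R^{2d}=E^-\oplus E^+$ splits into $M$-invariant stable and unstable subspaces, each of dimension $d$, and one may fix $\nu>0$ below $\min\{|\mathrm{Re}\,\lambda|\}$ so that $\|e^{Mt}v\|\le Ce^{-\nu t}\|v\|$ for $v\in E^-$, $t\ge0$, and $\|e^{-Mt}v\|\le Ce^{-\nu t}\|v\|$ for $v\in E^+$, $t\ge0$.

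Next I would exploit the classical graph property: since $E^\pm$ are the graphs of the stabilizing and anti-stabilizing solutions of the associated algebraic Riccati equation, the restrictions $\Pi_x|_{E^-}$ and $\Pi_x|_{E^+}$ are isomorphisms onto $\R^d$. Writing the solution from its two decaying ends, $\bze_T(t)=e^{Mt}w^-+e^{-M(T-t)}w^+$ with $w^-\in E^-$ and $w^+\in E^+$, the boundary conditions become the linear system
\begin{equation*}
\begin{pmatrix} \Pi_x|_{E^-} & \Pi_x e^{-MT}|_{E^+}\\ \Pi_x e^{MT}|_{E^-} & \Pi_x|_{E^+}\end{pmatrix}\begin{pmatrix} w^-\\ w^+\end{pmatrix}=\begin{pmatrix} a\\ b\end{pmatrix}.
\end{equation*}
By the dichotomy the two off-diagonal blocks are $O(e^{-\nu T})$, so for $T$ large the matrix is a uniformly small perturbation of the invertible block-diagonal map $\mathrm{diag}(\Pi_x|_{E^-},\Pi_x|_{E^+})$; its inverse is then bounded uniformly in $T$, yielding $\|w^-\|+\|w^+\|\le C(\|a\|+\|b\|)$. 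Plugging this back into $\bze_T(t)=e^{Mt}w^-+e^{-M(T-t)}w^+$ and using the contraction estimates gives $\|\bze_T(t)\|\le C(\|a\|+\|b\|)(e^{-\nu t}+e^{-\nu(T-t)})$, and since $\|a\|+\|b\|$ is $T$-independent it is absorbed into $C$. This controls $\|\bx_T(t)-\bar x\|$ and $\|\bp_T(t)-\bar p\|$, while the control estimate follows from $\bu_T(t)-\bar u=R^{-1}B^\top(\bp_T(t)-\bar p)$.

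The hard part is the combination of hyperbolicity and the graph property: it is exactly the decoupling of stable and unstable components that turns the two endpoint conditions---one at $t=0$, one at $t=T$---into a nearly block-diagonal, uniformly invertible system, hence into two-sided exponential decay. I would cite the Riccati/LQ theory for these structural facts rather than reprove them. I would also only claim constants independent of $T$ in the turnpike regime $T\ge T_0$ (the sole range of interest): the argument produces uniform invertibility of the displayed matrix precisely once $Ce^{-\nu T}$ is small enough, and indeed no $T$-independent constant can survive the limit $T\to0$, where steering between the fixed endpoints in vanishing time forces $\bze_T$ to blow up.
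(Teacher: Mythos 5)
Your proposal follows the same skeleton as the paper's proof --- reduce to the homogeneous system $\dot\zeta = M\zeta$ for the deviation, prove that the Hamiltonian matrix $M$ is hyperbolic via the Kalman/Hautus condition, and then exploit the resulting exponential dichotomy --- so on the structural level the two arguments coincide. Where you genuinely diverge is in the final step. The paper diagonalizes $M$ into stable and unstable blocks, writes $\Vert v(t)\Vert \le C_1 e^{-\nu_1 t}$ and $\Vert w(t)\Vert \le C_2 e^{-\nu_2 (T-t)}$, and simply asserts that $C_1, C_2$ do not depend on $T$; but $C_1$ secretly contains $\Vert v(0)\Vert$ and $C_2$ contains $\Vert w(T)\Vert$, and these are determined by a two-point boundary value problem whose data couple $t=0$ and $t=T$, so their $T$-uniformity is exactly what needs to be proved. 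You supply this missing step: the graph property of $E^{\pm}$ (via the stabilizing and anti-stabilizing Riccati solutions) makes $\Pi_x|_{E^\pm}$ isomorphisms, the boundary map is then an $O(e^{-\nu T})$ perturbation of an invertible block-diagonal operator, and a Neumann-series argument gives $\Vert w^-\Vert + \Vert w^+\Vert \le C(\Vert a\Vert+\Vert b\Vert)$ uniformly in $T\ge T_0$. This is essentially the Riccati-based route that the paper attributes to \cite{TrelatZuazua_JDE2015} rather than the shortcut actually written in the proof. Your restriction to $T\ge T_0$ is also a correct and honest refinement: for $x_0\neq x_1$ the left-hand side of \eqref{expturnLQ} blows up as $T\to 0$ while the right-hand side stays bounded, so no constant uniform over all $T>0$ can exist, and the proposition should be read in the large-$T$ regime. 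In short: same approach, but your version closes a real gap that the paper's streamlined proof leaves open.
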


\begin{proof}
Using the above optimality systems, we have
$$
\frac{d}{dt} \begin{pmatrix} \bx_T(t)-\bar x\\ \bp_T(t)-\bar p\end{pmatrix}
= M \begin{pmatrix} \bx_T(t)-\bar x\\ \bp_T(t)-\bar p\end{pmatrix} 
$$
In order to prove \eqref{expturnLQ}, the crucial observation is that the matrix $M$, which is Hamiltonian, is hyperbolic, i.e., all its eigenvalues have a nonzero real part (actually, the number of unstable modes is equal to the number of stable modes).
To prove this hyperbolicity property, we start by noting that, as a consequence of the Kalman condition on $(A,B)$, we have
\begin{equation}\label{hautus}
\ker(A^\top- \xi I_n)\cap\ker(B^\top)=\{0\}\qquad\forall \xi\in\mathbb{C} .
\end{equation}
It follows that the matrix $M$ has no purely imaginary eigenvalue. Indeed, let $(z_1,z_2)\in\mathbb{C}^n\times\mathbb{C}^n$and let $\mu\in\R$ be such that $(M-i\mu)\begin{pmatrix}z_1\\z_2\end{pmatrix}=0$. Then,
\begin{align*}
(A-i\mu)z_1+BR^{-1}B^\top z_2&=0\\
Qz_1-(A^\top+i\mu) z_2&=0,
\end{align*}
hence $z_1=Q^{-1}(A^\top+i\mu)z_2$ and thus $(A-i\mu)Q^{-1}(A^\top+i\mu)z_1+BR^{-1}B^\top z_2=0$. Multiplying to the left by $\bar z_2^\top$, we obtain $\Vert Q^{-1/2}(A^\top+i\mu)z_2\Vert^2+\Vert R^{-1/2}B^\top z_2\Vert^2=0$ and hence $(A^\top+i\mu)z_2=0$ and $B^\top z_2=0$. We infer that $z_2=0$ by using \eqref{hautus}. The claim is proved.

Since $M$ is hyperbolic, there exists a real-valued square matrix $P$ of size $2n$ such that
$$
P^{-1}MP = \begin{pmatrix}
M_1 & 0 \\ 0 & M_2
\end{pmatrix}
$$
where all eigenvalues of the matrix $M_1$ have a negative real part, and all eigenvalues of $M_2$ have a positive real part.
Now, setting $\begin{pmatrix} \bx_T-\bar x\\ \bp_T-\bar p\end{pmatrix} = P \begin{pmatrix} v\\ w\end{pmatrix}$, the extremal system gives
$$
\frac{d}{dt} \begin{pmatrix} v(t)\\ w(t)\end{pmatrix} = P^{-1} M P \begin{pmatrix} v(t)\\ w(t)\end{pmatrix} 
$$
so $\dot v(t) = M_1 v(t)$ and $w(t)=M_2w(t)$. All eigenvalues of $M_1$ have a negative real part, hence there exist constants $C_1>0$ and $\nu_1>0$, not depending on $T$, such that $\Vert v(t)\Vert \leq C_1 e^{-\nu_1 t}$. For $w(t)$, we reverse time and we apply the same argument, hence $\Vert w(t)\Vert \leq C_2 e^{-\nu_2 (T-t)}$. Finally, \eqref{expturnLQ} follows by noting that $\bx_T(t)$ et $\bp_T(t)$ are linear combinations of $v(t)$ and $w(t)$.
\end{proof}

The exponential turnpike property \eqref{expturnLQ} says that, except near $t=0$ and $t=T$, the optimal state, control and adjoint are exponentially close to static values, themselves corresponding to the solution of the associated static optimization problem.
In \cite{TrelatZuazua_JDE2015}, the proof of the above proposition is a bit different and relies on the use of the Riccati theory: actually, the matrix $P$ is built by considering the minimal and maximal solutions of the Riccati algebraic equation, which gives an interpretation of the constants $C$ and $\nu$ in terms of these matrices. Anyway, the argument remains very easy and it withstands a number of generalizations: to infinite dimension, to nonlinear dynamics, and also to situations where the turnpike is not restricted, as above, to a singleton but may even consist of a nontrivial set of trajectories (an example being the periodic turnpike).
We refer to the chapter \cite{FaulwasserGrune} of the Volume 1 of the present Handbook, for a recent survey on the turnpike property in optimal control, containing a number of references and commenting also on the important related notion of dissipativity.
What we want to point out here is that, when the turnpike property is satisfied, we can use it to successfully initialize a variant of the shooting method.

\begin{rem}[Variant of the shooting method] \label{rem401}
In the turnpike context, we know that, in the middle of the trajectory, $\bx_T(T/2)$ and $\bp_T(T/2)$ are exponentially close to $\bar x$ and $\bar p$. In such conditions, if this is feasible, it is convenient to first compute the solution of the static problem, then to implement a variant of the shooting method by initializing it ``in the middle'', as follows.
Using the notations of the previous section, the unknown is now $\bz(T/2)\in\R^{2d}$, which is initialized at $(\bar x,\bar p)$, the steady-state solution of the static optimal control problem. 
Then, integrating backwards the extremal system over $[0,T/2]$, we compute $\bz(0)$; integrating forward the extremal system over $[T/2,T]$, we compute $\bz(T)$. Finally, the unknown $\bz(T/2)$ is tuned so that $G(\bz(0),\bz(T))=0$, thanks to a Newton method.
It has been observed in \cite{TrelatZuazua_JDE2015} that this variant of the single shooting method is very efficient.
\end{rem}

\subsection{Solving the Zermelo problem by the shooting method}\label{sec.3.4}
We consider the navigation problem of Zermelo presented in Section~\ref{s2}, with $a=0$ and $b=1$. We take
$V_{\max}=1$, $y_f=(20,1)$, $h(y_2)=3+0.2y_2(1-y_2)$,
where the target is the point $y_f$ (a slightly simpler problem than the general previously presented), while the initial point is $y_0=(0,0)$.
We treat the minimum time case so that the velocity can be set to $V_{\max}$ and the only control is the angle. An obstacle is inserted along the unconstrained optimal trajectory:

$$
\frac{(y_1-y_{1,f}/2)^2}{a_1^2}+\frac{(y_2-y_{2,f}/2.5)^2}{a_2^2} \leq 1
$$
with $a_1=2$ and $a_2=0.1$.
We use a  logarithmic barrier to penalize internally the state constraint and we consider the augmented cost
$$
T - \alpha \int_0^1 \ln \left( \frac{(y_1(t)-y_{1,f}/2)^2}{a_1^2}+\frac{(y_2(t)-y_{2,f}/2.5)^2}{a_2^2} -1 \right) \, dt \to \min.
$$
For a detailed study of internal penalization, we refer to \cite{chaplais-2014a}.
For the computation below, $\alpha$ is set to $10^{-3}$.
To obtain more accurate results, one could perform a numerical continuation on the penalty parameter $\alpha$ as is customary for interior methods \cite{nocedal-2006a}.
We apply the maximum principle which leads to the (normal) maximized Hamiltonian
\begin{equation} \label{eq401}
  H(y,p) = V_{\max}\sqrt{p_1^2+p_2^2} + p_1 h(y_2) + \alpha \ln \left( \frac{(y_1-y_{1,f}/2)^2}{a_1^2}+\frac{(y_2-y_{2,f}/2.5)^2}{a_2^2} -1 \right) -1.
\end{equation}
The shooting problem then consists in finding the initial value $\bp(0) \in \mathbb{R}^2$ so that, integrating the flow of the maximized Hamiltonian, the target $y_f$ is reached. Moreover, the equation $H=0$ is added to accomodate the fact that the final time $T$ is free. Depending on the initialization of the shooting method, two solutions are obtained, see Figure~\ref{fig30}. One of them is clearly a local minimizer as one can check comparing the numerical final times. An interesting approach in such a situation is to rely on an HJB solver (see Section \ref{s5}) to retrieve an initial guess of the adjoint state (and of the final time) that avoid strict local minima. It can indeed be checked numerically that the HJB solution on the previous data allows shooting to converge towards the global minimizer on the internally penalized problem.
This illustrates how one can leverage the strengths of both HJB and shooting: while HJB might not be able to produce a high precision numerical control, it will allow to select a proxy for the adjoint of the global minimum and for the associated value, good enough to ensure convergence of the shooting method (the difficult issue for indirect methods) towards a precise numerical solution.

\begin{rem}
A refinement of this computation could involve a continuation (or differential homotopy) on the size of the obstacle. Path following methods would indeed allow to track the two branches associated with the global and strictly local minimum, then to decide which one is optimal for each size of the obstacle by comparing the associated final times.
\end{rem}

\begin{figure}
    \centering
    \includegraphics[width=0.7\textwidth]{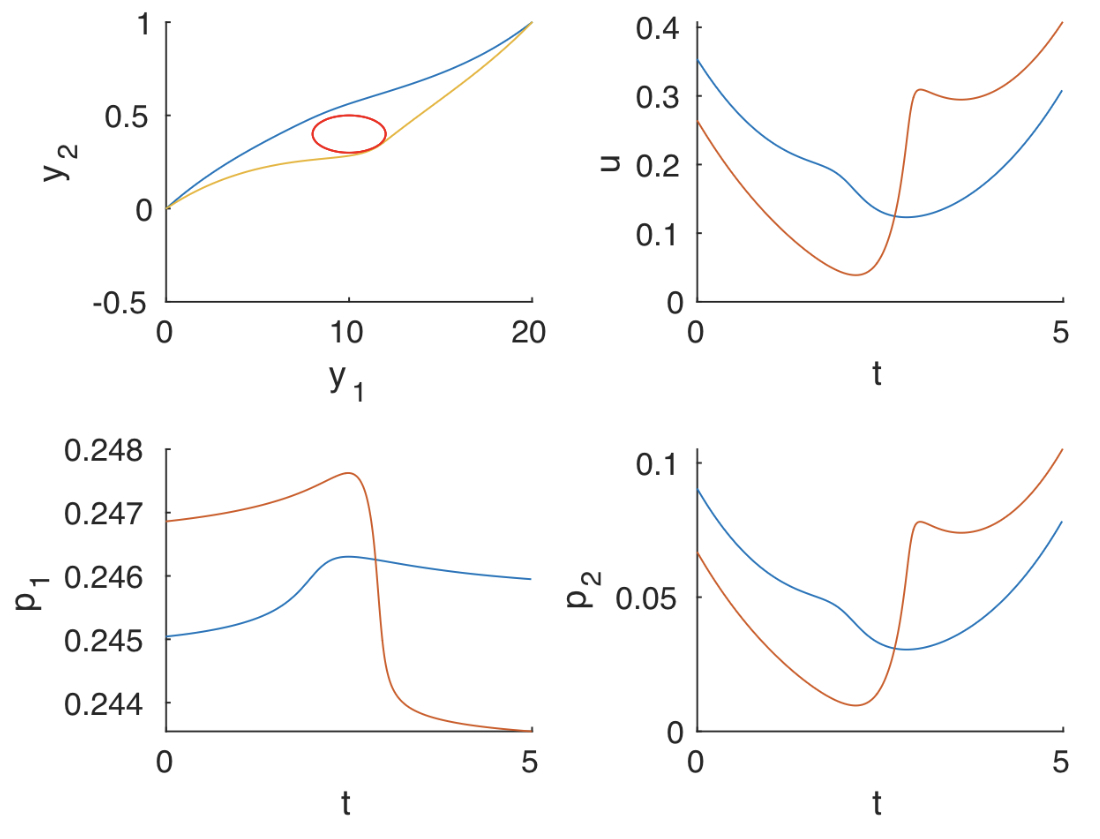}
    \caption{\small Zermelo problem with obstacle: result of the shooting method. The state, costate and control are portrayed for the two solutions (penalisation parameter $\alpha=10^{-3}$). The first solution (in blue) is slightly shorter ($T \simeq 4.98$) than the second one (yellow, $T \simeq 4.99$).}
    \label{fig30}
\end{figure}

Another relevant observation on this problem is related to the turnpike property described in Section~\ref{s43}. Although, the problem is not linear-quadratic, one can easily guess the role played by this property for ``large'' (in terms of $y_1$) final conditions. We keep the same data for $V_\mathrm{max}$ and for the current, remove the obstacle, and now target $y_f=(200,1)$. For such a distant target, one expects the optimal control to use the ``fast lane'' so that, most of the time, $y_2$ remains close to $1/2$ where the current $h$ is maximum. So the guess would be that, for a large part of the trajectory, $y_2 \simeq 1/2$, $u_2=\dot{y}_2 \simeq 0$, $u_1 \simeq 1$, and $T \simeq (y_{1,f}-y_{1,0})/(1+h(1/2))$. The maximized (normal) Hamiltonian is (compare \eqref{eq401})
$$ H(y,p) = V_{\max}\sqrt{p_1^2+p_2^2} + p_1 h(y_2) - 1, $$
so that, on $H=0$, $p \simeq (1/(1+h(1/2,0))$. These approximations can be used to initialize the variant of the shooting method described in Remark~\ref{rem401}. It is straightforwardly implemented in \texttt{Julia} according to\footnote{The code is available and executable online at \href{https://ct.gitlabpages.inria.fr/gallery}{ct.gitlabpages.inria.fr/gallery}}

\begin{verbatim}
# Regular maximized Hamiltonians and associated flow
H(y, p, u) = -1.0 + Vmax*p'*u + p[1]*h(y[2])
ur(p) = p / sqrt(p[1]^2+p[2]^2)
Hr(y, p) = H(y, p, ur(p))
fr = Flow(Hr)

# Shooting function
function shoot(y1, p1, tf)

    yy0, p0 = fr(tf/2.0, y1, p1, t0)
    yyf, pf = fr(tf/2.0, y1, p1, tf)
    s = zeros(eltype(y1), 5)
    s[1:2] =  yy0-y0
    s[3:4] = (yyf-yf) ./ yf
    s[5] = Hr(y1, p1)

    return s

end
\end{verbatim}

\noindent The structure of the optimal solution is as expected, see Figure~\ref{fig30t}.

\begin{figure}
    \centering
    \includegraphics[width=0.7\textwidth]{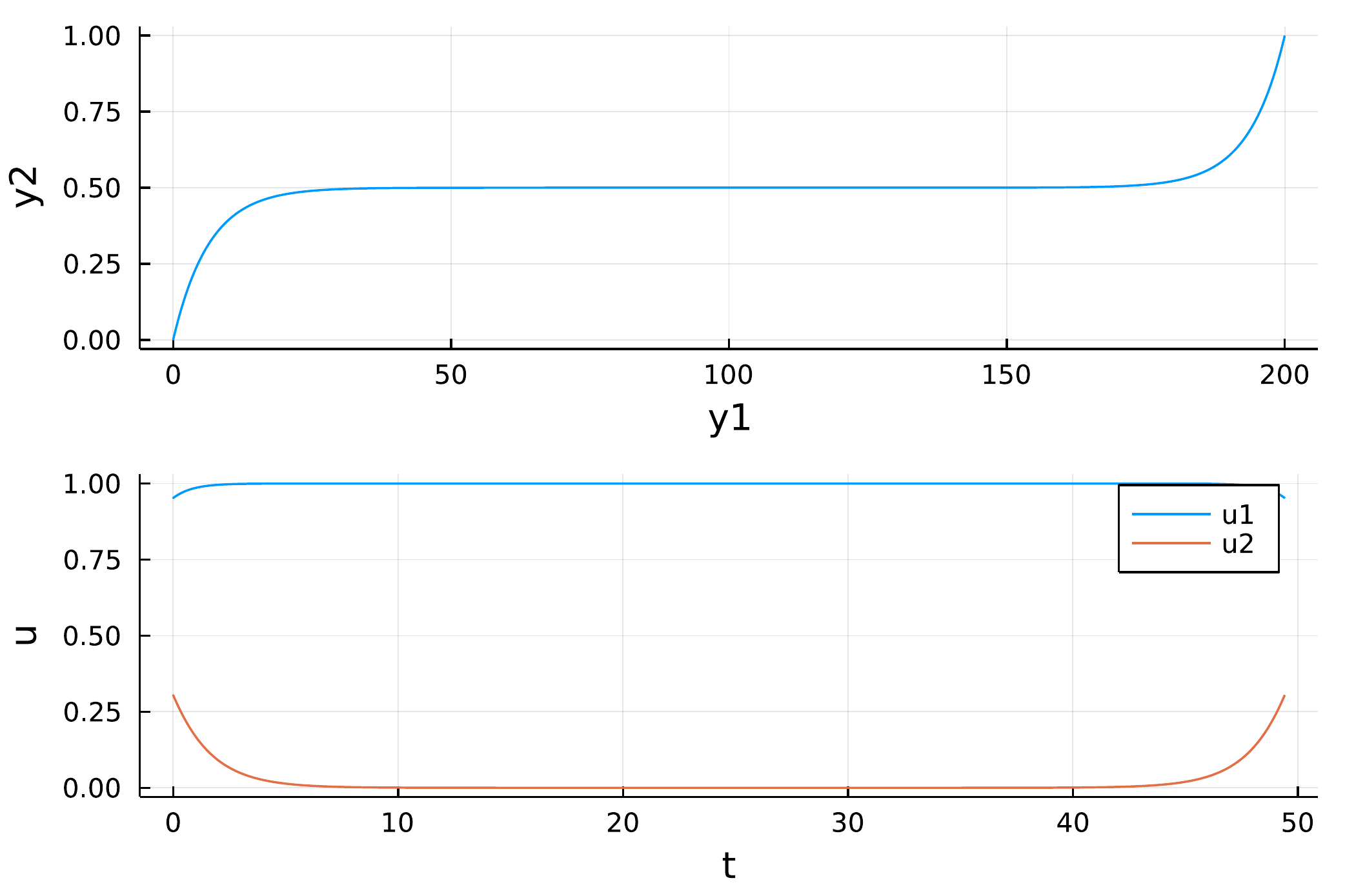}
    \caption{\small Zermelo problem, turnpike property: result of the variant of the shooting method. The state and control are portrayed, illustrating that most of the time is spent close to $y_2=1/2$ where the current is maximum. Accordingly, $u_2 \simeq 0$ on the corresponding control subarc.}
    \label{fig30t}
\end{figure}

\begin{rem}
Further analysis is required to tackle the case of ``strong'' currents. When there exist zones where the drift cannot be compensated by the control, abnormal trajectories come into play and discontinuities of the value function (minimum time) are observed. See \cite{wembe-2022a} for a detailed treatment of such cases. 
\end{rem}

\subsection{Solving the Goddard problem by the shooting method} \label{Section.Goddard_Tir}
We chain the result of Section \ref{s32}, obtained by a direct method, with a shooting method. The previous numerical solution, although not very accurate, has captured the structure of the solution (more precisely, of what one may hope to be a local minimizer, at least). This knowledge allows us to define the appropriate shooting function, namely one that combines four arcs (bang-singular-constrained-bang), each one being the flow of a relevant Hamiltonian. On can then leverage the accurate knowledge (including the Hamiltonian character) gained on each subarc by means of Pontryagin maximum principle to obtain a very accurate solver. In general, knowing the structure alone is not enough to actually solve the problem. One also needs a good initial guess for the zero of the shooting function, which turns to be also provided by the previous direct solver. In order to set up our shooting, we rely on the maximum principle and observe that the control can be either bang, singular of boundary. Indeed, system \eqref{eq101}-\eqref{eq103} is affine in the control and can be written according to
$$ \dot{\bx}(t) = F_0(\bx(t)) + \bu(t)F_1(\bx(t)),\quad \bu(t) \in [0,1], $$
with $\bx=(\br,\bv,\bbm) \in \mathbb{R}^3$ and vector fields that we shall define in \texttt{Julia}, completing the code started Section~\ref{s32}:
\begin{verbatim}
# Dynamics
function F0(x)
    r, v, m = x
    D = Cd * v^2 * exp(-β*(r-1.0))
    F = [ v, -D/m-1.0/r^2, 0.0 ]
    return F
end

function F1(x)
    r, v, m = x
    F = [ 0.0, Tmax/m, -b*Tmax ]
    return F
end
\end{verbatim}
In order to deal with the state constraint $g(\bx(t)) := v_{\max}-\bv(t) \geq 0$,
we follow Remark~\ref{rem44} (note that we use the opposite sign for the constraint) and introduce the Hamiltonian (where the constraint has been directly adjoined, see \cite{hartl-1995a})
$$ H(x,p,u,\mu) = H_0(x, p) + u H_1(x, p) + \mu g(x), $$
where $H_i(x,p):=\langle p,F_i(x) \rangle$ are the Hamiltonian lifts of the aforementioned fields. The maximization condition implies that $u$ is bang ($0$ or $1$) whenever $H_1$ is not zero. Besides, whenever the state constraint is not active, the associated non-negative multiplier $\mu$ vanishes (complementarity condition). As a result, bang arcs are obtained by computing the flow of either $H_0$ (case $H_1<0$, $u=0$) or $H_0+H_1$ (case $H_1>0$, $u=1$). Conversely, when $H_1$ vanishes identically, assuming the state constraint is not active, as indicated in Remark~\ref{rem45} one can differentiate a.e. two times $H_1$ to retrieve the singular control provided the length three Poisson bracket $H_{101}:=\{H_1,\{H_0,H_1\}\}$ is not zero (singular of \emph{order one}; see, e.g., \cite{BonnardChyba}):
$$ u_s(x,p) = -\frac{H_{001}}{H_{101}} \cdot $$
(Same notation used for $H_{001}$.) Plugging this dynamic feedback control into the original Hamiltonian (with $\mu=0$) defines the singular Hamiltonian
$$ H_s(x,p) := H(x,p,u_s(x,p),\mu=0) $$
whose flow coincides with the extremal flow on 
$\Sigma' := \{H_1 = H_{01} = 0\}$
(see, e.g., \cite{AgrachevSachkov}).
Along a boundary arc where the state constraint is activated, if the control is interior ($\bu(t) \in (0,1)$), $H_1$ must also vanish. Moreover, if the constraint is of order one (that is if the control appears when the equality $g(\bx(t))=0$ is differentiated once) which gives in this case (Lie derivative of the constraint along $F_1$)
$ F_1\cdot g \neq 0$,
the interiority of the boundary control implies that the contact with the constraint is transverse at the exit time $t_3$ (where, in our case, a bang arc $u=0$ is joined):
$\dot{g}(t_3+) \neq 0$, and there is no jump on the adjoint \cite{hartl-1995a}. Denoting $t_2$ the entry point, on $(t_1,t_2)$ one retrieves the boundary control by differentiating once $g(\bx(t))=0$ as
$0 = \dot{g} = F_0\cdot g+\bu F_1\cdot g$,
which implies that
$u_b(x) = -\frac{F_0\cdot g}{F_1\cdot g}$
under the two previous assumptions. Similarly, differentiating once $H_1=0$ allows to compute the multiplier $\mu$ as
$0 = \dot{H}_1 = \{H_0+\bu H_1+\mu g,H_1\} = H_{01}-\mu F_1\cdot g$,
so that
$\mu_b(x,p) =  \frac{H_{01}}{F_1\cdot g} $.
The relevant flow is an integral curve of the boundary Hamiltonian
$$ H_b(x,p) := H(x,p,u_b(x),\mu_b(x,p)). $$
All in all, the symbolic-numeric framework allows to define everything in terms of the vector fields and of the constraint:
\begin{verbatim}
# Computation of singular control of order 1
H0(x, p) = p' * F0(x)
H1(x, p) = p' * F1(x)
H01 = Poisson(H0, H1)
H001 = Poisson(H0, H01)
H101 = Poisson(H1, H01)
us(x, p) = -H001(x, p)/H101(x, p)

# Computation of boundary control
g(x) = vmax-x[2] # vmax - v ≥ 0
ub(x) = -Lie(F0, g)(x) / Lie(F1, g)(x)
μb(x, p) = H01(x, p) / Lie(F1, g)(x)

# Hamiltonians (regular, singular, boundary) and associated flows
H(x, p, u, μ) = H0(x, p) + u*H1(x, p) + μ*g(x)
Hr(x, p) = H(x, p, 1.0, 0.0)
Hs(x, p) = H(x, p, us(x, p), 0.0)
Hb(x, p) = H(x, p, ub(x), μb(x, p))
\end{verbatim}
Then, to integrate the Hamiltonians to obtain the flows and define the shooting function in terms of the initial adjoint $p_0$, the entry point of the singular arc $t_1$, the entry point of the boundary arc $t_2$, the exit point $t_3$, and the free final time $T$:
\begin{verbatim}
f0 = Flow(H0)
fr = Flow(Hr)
fs = Flow(Hs)
fb = Flow(Hb)

# Shooting function
function shoot(p0, t1, t2, t3, tf)

    x1, p1 = fr(t0, x0, p0, t1)
    x2, p2 = fs(t1, x1, p1, t2)
    x3, p3 = fb(t2, x2, p2, t3)
    xf, pf = f0(t3, x3, p3, tf)
    s = zeros(eltype(p0), 7)
    s[1:2] = pf[1:2] - [ 1.0, 0.0 ]
    s[3] = xf[3] - mf
    s[4] = H1(x1, p1)
    s[5] = H01(x1, p1)
    s[6] = g(x2)
    s[7] = H0(xf, pf)

    return s
end
\end{verbatim}
It is straightforward to retrieve initial guesses for these unknowns from the direct code solution, most notably by retrieving the Lagrange multipliers from the optimizer through \texttt{JuMP} interface (note the minus sign to accomodate the convention on the adjoint state in contrast with the one on Lagrange multipliers):
\begin{verbatim}
p = -[ [ dual(con_dr[i]), dual(con_dv[i]), dual(con_dm[i]) ] for i in 1:N ]
\end{verbatim}
Moreover, in view of these results,
the constraint on the final mass is assumed to be active (which is in accordance with the final zero bang arc). Automatic differentiation can also be used to compute the derivative of the shooting function (several \texttt{Julia} backends to do this are available and include differentiating properly calls to ODE solvers). See Figures \ref{fig41} and \ref{fig42} for the associated numerical simulations, while the code itself is available and executable online.\footnote{\href{https://ct.gitlabpages.inria.fr/gallery}%
{ct.gitlabpages.inria.fr/gallery}}

\begin{figure}
    \centering
    \includegraphics[width=0.7\textwidth]{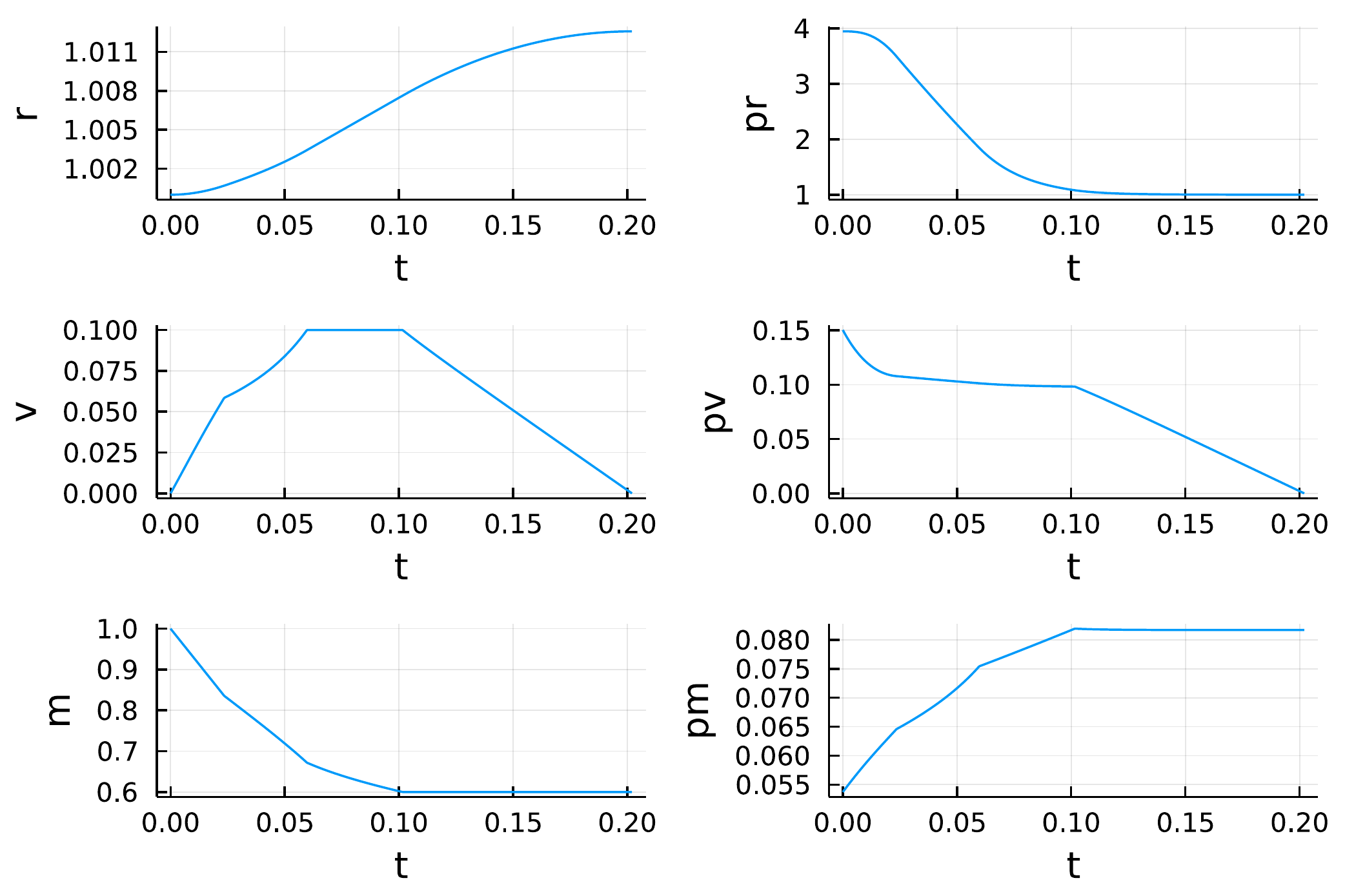}
    \caption{\small Goddard problem: result of the shooting method,
    states and costates. The values are in line with those obtained by the direct method. They are computed by concatenating the integration of the four Hamiltonian flows involved.}
    \label{fig41}
\end{figure}

\begin{figure}
    \centering
    \includegraphics[width=0.7\textwidth]{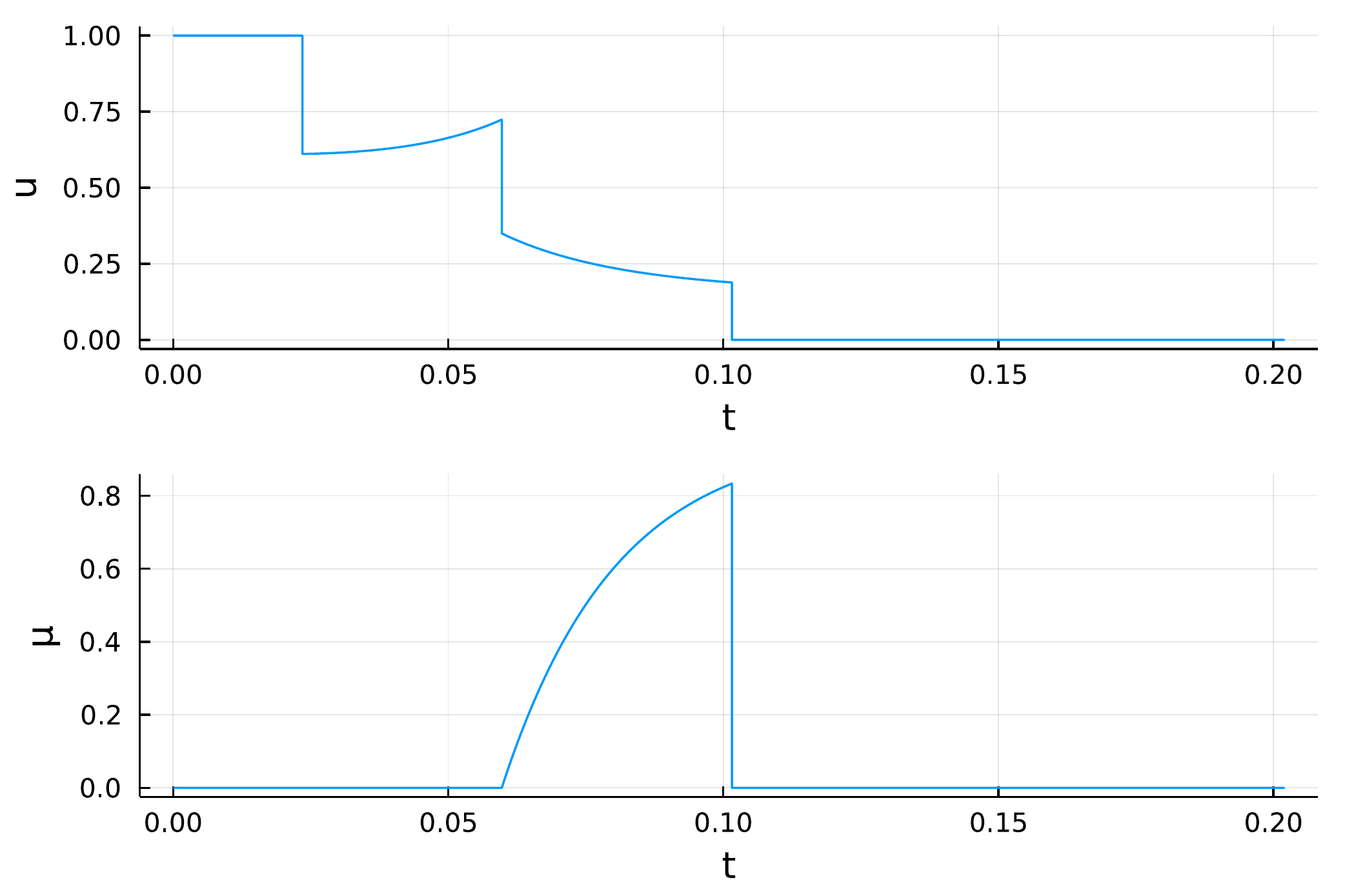}
    \caption{\small Goddard problem: result of the shooting method, control and state constraint multiplier. The bang-singular-constrained-bang structure of the control is very accurately determined, while the state constraint multiplier is positive only along the boundary arc.}
    \label{fig42}
\end{figure}
\section{Hamilton--Jacobi--Bellman approach} \label{s5}

The systematic study of optimal control problems dates back to the late 1950s, and one major tool is Dynamical Programming and Hamilton--Jacobi--Bellman (HJB) approach. This approach describes the optimal control problem via the so-called {\it value function} $\VV: [0,T]\times\R^d\longrightarrow \R$, which associates to any initial condition the optimal value of the control problem, and is defined accordingly as 
\begin{eqnarray}
  \VV (t,x) 
  & := & \inf\bigg\{\varphi(\bx(T))  + \int_t^T \ell(s,\bx(s),\bu(s))ds\ \bigg| \nonumber \\
  & & (\bx,\bu)\in \XX_{[t,T]}(x), \ 
  g(\bx(s))\leq 0 \quad \mbox{ for all } \tau\in [s,T], 
  \ \ \mbox{and } g_f(\bx(T))\leq 0\bigg\}.
 \label{eq.vartheta}
\end{eqnarray}
It is known that  the value function $\VV $
 can be characterized as the unique solution, in a suitably weak sense, of a Hamilton--Jacobi type equation \cite{BarCap97}. Starting from the knowledge of the value function, which is typically obtained via numerical approximation, it is possible to reconstruct the optimal solution in feedback form, i.e., with an optimal control given as a function of the current state.

\subsection{Unconstrained Bolza control problems}

In this section we present first some classical results of HJB approach when the optimal control problems is free of  state constraints (i.e.,  $g=g_f\equiv 0$). The control problem is described by the unconstrained value function
 \begin{equation}\label{vdef}
   \VV (t,x) 
 = \inf\bigg\{\varphi(\bx(T))  + \int_t^T \ell(s,\bx(s),\bu(s))ds\ \bigg| 
  (\bx,\bu)\in \XX_{[t,T]}(x)\bigg\}.
 \end{equation}
When \Hyp{4} is satisfied, the control problem admits a solution.
The Gronwall estimate on the trajectories and the Lipschitz regularity of the cost functions ensure  that the value function, although in general non-differentiable, enjoys itself a  Lipschitz continuity property.
\begin{prop}
Under \Hyp{0}-\Hyp{2}, the value function $\VV$ is locally Lipschitz continuous.
\end{prop}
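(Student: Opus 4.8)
The plan is to establish local Lipschitz continuity in two stages: first with respect to the spatial variable $x$ (with a constant depending on the bounded region), then with respect to the time variable $t$, and finally to combine these into a joint local Lipschitz estimate. Throughout I would exploit the Gronwall bound \eqref{eq:traj_gronwall}, which controls trajectories uniformly on bounded sets, together with the local Lipschitz and linear-growth hypotheses \Hyp{1a}, \Hyp{1b}, \Hyp{2}.

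For the spatial estimate, fix $t\in[0,T]$ and two initial points $x,y$ in a bounded ball $\BB_d(0;R_0)$. The key comparison is a standard ``same control'' argument: given any admissible control $\bu\in\cU$, let $\bx$ and $\by$ be the two trajectories of \eqref{eq:etat} starting at $x$ and $y$ respectively under the \emph{same} $\bu$. By \eqref{eq:traj_gronwall}, both trajectories stay in a ball $\BB_d(0;R)$ with $R$ depending only on $R_0$ and $T$. On this ball the dynamics is Lipschitz in the state with constant $k_R$, so $\|\bx(s)-\by(s)\|\le \|x-y\|\,e^{k_R(s-t)}$ by Gronwall's lemma. Now I would compare the costs of these two admissible pairs: the difference of the terminal costs is bounded by the Lipschitz constant of $\varphi$ on $\BB_d(0;R)$ times $\|\bx(T)-\by(T)\|$, and the difference of the running costs is bounded, using \Hyp{1b}$(ii)$, by $\int_t^T k_R\|\bx(s)-\by(s)\|\,ds$. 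Both are thus $\le L_R\|x-y\|$ for a constant $L_R$ depending only on $R_0$ and $T$. Taking the infimum over $\bu$ and then symmetrizing in $x,y$ yields $|\VV(t,x)-\VV(t,y)|\le L_R\|x-y\|$. (One should note that $\VV$ is finite here because \Hyp{4} guarantees existence, per Remark~\ref{rem.existence}, and the linear-growth bounds keep the value finite in any case.)

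For the temporal estimate, fix $x\in\BB_d(0;R_0)$ and take $t<t'$ in $[0,T]$. The cleanest route uses the dynamic programming principle, $\VV(t,x)=\inf_{\bu}\big\{\int_t^{t'}\ell(s,\bx(s),\bu(s))\,ds+\VV(t',\bx(t'))\big\}$. For the upper bound on $\VV(t,x)-\VV(t',x)$, I would pick a near-optimal control for $\VV(t',x)$, prepend a constant control on $[t,t']$, estimate the extra running cost by $\int_t^{t'}c_\ell(1+\|\bx(s)\|)\,ds\le C(t'-t)$ via the linear-growth bound \Hyp{1b}$(iii)$ and the Gronwall estimate, and control $\VV(t',\bx(t'))-\VV(t',x)$ by the already-established spatial Lipschitz estimate together with $\|\bx(t')-x\|\le C(t'-t)$ (again from \Hyp{1a}$(iii)$ and Gronwall). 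The reverse inequality is symmetric. This gives $|\VV(t,x)-\VV(t',x)|\le C_R|t-t'|$ with $C_R$ depending only on $R_0$ and $T$. Combining the two estimates via the triangle inequality delivers joint local Lipschitz continuity.

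The main obstacle I anticipate is not any single estimate but the careful bookkeeping of which bounded set the trajectories inhabit, so that the various local Lipschitz constants $k_R$, the Lipschitz constant of $\varphi$ on $\BB_d(0;R)$, and the growth constants are all taken on a \emph{common} ball determined by $R_0$ and $T$ through the Gronwall bound \eqref{eq:traj_gronwall}; this is what makes the resulting constants independent of the particular points and controls, and hence yields a genuine \emph{local} (uniform on bounded sets) Lipschitz statement. A secondary technical point is the temporal estimate when the final time is reached or when concatenating controls at the junction time $t'$: one must check that the spliced control is still admissible (measurable, $U$-valued), which is immediate since $U$-valued piecewise definitions preserve both properties, and that the value $\VV$ is finite so that the near-optimal selections make sense.
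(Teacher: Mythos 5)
Your argument is correct and follows precisely the route the paper indicates for this proposition (which it states without a detailed proof, attributing it to ``the Gronwall estimate on the trajectories and the Lipschitz regularity of the cost functions''): a same-control comparison plus Gronwall for the spatial estimate, and the dynamic programming principle plus the linear-growth bounds for the temporal one, with all local Lipschitz constants taken on the common ball furnished by \eqref{eq:traj_gronwall}. The only nit is your parenthetical appeal to \Hyp{4}, which is not among the hypotheses of the proposition; as you yourself note, finiteness of $\VV$ already follows from the linear-growth bounds in \Hyp{1b}, \Hyp{2} together with \eqref{eq:traj_gronwall}, so nothing is lost.
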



To deal with the lack of smoothness, two important tools have been developed: the theory of viscosity solutions and the non-smooth analysis. The theory of viscosity solutions for nonlinear Hamilton--Jacobi equations, introduced in the early 1980s by Crandall--Lions \cite{CL81,CL83} and Crandall--Evans--Lions \cite{CEL84}, allows to define generalized solutions to broad classes of nonlinear partial differential equations, including the HJB equations of optimal control problems. We refer also to the books \cite{Bar94,BarCap97} for a more complete introduction to this theory. Another important tool is the non-smooth analysis, which addresses to differential analysis for non-smooth functions. We refer the reader to \cite{CELAUB,Clarke2013book,CLSW98,Vinter2000} for an introduction of the theory and its applications.


\subsubsection{Dynamic programming and Hamilton--Jacobi--Bellman equation}

The fundamental idea of Dynamic Programming is that the value function $\cV$ satisfies a functional equation, often called the \textit{Dynamic Programming Principle} (DPP). 

\begin{prop}[Dynamic Programming Principle]\label{dpp}
Assume \Hyp{0}--\Hyp{2}, and denote by $\bx^\bu(s)$ the solution of \eqref{eq:etat} for a given control $\bu$, and such that $\bx^\bu(t)=x$.
Then, for all $x\in\R^d$, $t\in[0,T]$ and $h\in[0,T-t]$, the value function $\cV$ satisfies the equality
\begin{equation}\label{eq:dpp}
\cV(t,x)=\inf_{\bu\in\cU}\left\{\int_t^{t+h}\ell(s,\bx^\bu(s),\bu(s))ds + \cV(t+h,\bx^\bu(t+h))\right\}.
\end{equation}
\end{prop}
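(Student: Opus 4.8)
The plan is to prove the two inequalities $\cV(t,x)\le W(t,x)$ and $\cV(t,x)\ge W(t,x)$ separately, where $W(t,x)$ denotes the right-hand side of \eqref{eq:dpp}. The only structural ingredient beyond the definition \eqref{vdef} of $\cV$ is the \emph{flow (semigroup) property} of the controlled dynamics: by uniqueness of solutions to \eqref{eq:etat}, which is guaranteed by the local Lipschitz condition \Hyp{1a}$(ii)$ together with the \emph{a priori} bound \eqref{eq:traj_gronwall}, the trajectory issued from $(t,x)$ under a control $\bu$, restricted to $[t+h,T]$, coincides with the trajectory issued from $(t+h,\bx^\bu(t+h))$ under the restriction of $\bu$; conversely, concatenating a control on $[t,t+h]$ with a control on $[t+h,T]$ produces an admissible control on $[t,T]$ whose trajectory is the corresponding concatenation.

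For the inequality $\cV(t,x)\le W(t,x)$, I would fix an arbitrary $\bu\in\cU$ and set $y:=\bx^\bu(t+h)$. Given $\eps>0$, choose a control $\bv\in\cU$ that is $\eps$-optimal for the problem started at $(t+h,y)$, i.e.\ whose associated cost is at most $\cV(t+h,y)+\eps$. Concatenating $\bu$ on $[t,t+h]$ with $\bv$ on $[t+h,T]$ yields an admissible pair on $[t,T]$, and by the flow property its total cost equals $\int_t^{t+h}\ell(s,\bx^\bu(s),\bu(s))\,ds$ plus the cost of $\bv$ from $(t+h,y)$. Since $\cV(t,x)$ is an infimum over all admissible pairs, it is bounded above by this quantity; taking the infimum over $\bu$ and then letting $\eps\to 0$ gives $\cV(t,x)\le W(t,x)$.

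For the reverse inequality, given $\eps>0$ I would pick an $\eps$-optimal control $\bu\in\cU$ for the problem started at $(t,x)$, so that $\varphi(\bx^\bu(T))+\int_t^T\ell(s,\bx^\bu(s),\bu(s))\,ds\le \cV(t,x)+\eps$. Splitting the integral at $t+h$ and using that the restriction of $\bu$ to $[t+h,T]$ is admissible from $(t+h,\bx^\bu(t+h))$, whence $\varphi(\bx^\bu(T))+\int_{t+h}^T\ell\ge\cV(t+h,\bx^\bu(t+h))$, one obtains $\cV(t,x)+\eps\ge\int_t^{t+h}\ell(s,\bx^\bu(s),\bu(s))\,ds+\cV(t+h,\bx^\bu(t+h))\ge W(t,x)$. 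Letting $\eps\to0$ closes the argument and, combined with the previous paragraph, yields the equality.

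The point requiring most care is the admissibility of the concatenated control in the first inequality, which reduces to checking that the glued function is measurable and $U$-valued (immediate) and that its trajectory is the concatenation of the two trajectories (the flow property above). It is worth stressing that \emph{no measurable selection is needed} here: in the first inequality the intermediate endpoint $y=\bx^\bu(t+h)$ is a single point once $\bu$ is fixed, so one $\eps$-optimal control $\bv$ suffices, and the delicate measurable dependence of $\eps$-optimal controls on the intermediate state never enters. Finiteness of all the quantities involved, so that the $\eps$-optimal arguments make sense, follows from \Hyp{1b}$(iii)$ and \Hyp{2} combined with the Gronwall bound \eqref{eq:traj_gronwall}.
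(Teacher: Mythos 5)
Your proof is correct and complete. The paper itself states the Dynamic Programming Principle without giving a proof (it defers to the standard viscosity-solution references), and your argument --- the two inequalities obtained by restriction and concatenation of controls together with $\varepsilon$-optimal selections, justified by the uniqueness/flow property under \Hyp{1a}$(ii)$ --- is precisely the classical proof found in those references. Your closing remarks correctly isolate the only points that genuinely need checking: admissibility of the glued control, the fact that no measurable selection is required because the intermediate state is a single point once $\bu$ is fixed, and finiteness of all quantities via the linear-growth hypotheses combined with the Gronwall estimate \eqref{eq:traj_gronwall}.
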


This principle provides two properties, called sub- and super-optimality, which are defined as follows. 
For any function $V:[0,T]\times\R^d\to\R$,
\begin{enumerate}
\item
we say that $V$ satisfies the {\em super-optimality principle} if for any $t\in[0,T]$, $x\in\R^d$, there exists $(\bx^\bu,\bu)\in \XX_{[t,T]}(x)$ such that
\[
V(t,x)\,\geq \,V(t+h,\bx^\bu(t+h))+\int_t^{t+h}\ell(s,\bx^\bu(s),\bu(s))\,ds,\ \forall\,h\in[0,T-t];
\]
\item
we say that $V$ satisfies the {\em sub-optimality principle} if for any $t\in[0,T]$, $x\in\R^d$, and $(\bx^\bu,\bu)\in \XX_{[t,T]}(x)$,
\[
V(t,x)\,\leq\, V(t+h,\bx^\bu(t+h))+\int_t^{t+h}\ell(s,\bx^\bu(s),\bu(s))\,ds,\ \forall\,h\in[0,T-t].
\]
\end{enumerate}

\noindent In principle, once chosen a ``small'' time increment $h$, the DPP allows to compute the value function at the point $(t,x)$ by splitting the trajectories at time $t+h$ and starting with the position of the trajectory $y_{t,x}$ at time $t+h$. As it will be seen later on, it is possible to construct numerical schemes based on this idea to compute an approximation of the value function.

Under the assumption of differentiabillity for the function $\cV$, we can derive from the DPP its infinitesimal version, the \textit{Hamilton--Jacobi--Bellman} equation
\begin{subequations}\label{sec5:HJB}
\begin{eqnarray}
& & -\cV_t(t,x)+H(t,x,D\cV(t,x))=0 \quad  \text{for } (t,x)\in(0,T)\times\R^d,\label{sec5:HJBa}\\
& & \cV(T,x)=\vp(x) \qquad  \qquad \qquad 
\qquad \qquad \text{for } x\in\R^d,\label{sec5:HJBb}
\end{eqnarray}
\end{subequations}
where the Hamiltonian is given by
\begin{equation}\label{eq:H}
H(t,x,q)=\sup_{u \in U}\big\{-f(t,x,u)\cdot q-\ell(t,x,u)\big\}.
\end{equation}
In general, as mentioned before, neither $\VV$ is differentiable, nor the nonlinear equation \eqref{sec5:HJB} is expected to admit a classical solution. These problems are circumvented by the theory of viscosity solutions and the non-smooth analysis, see \cite{CLSW98,BarCap97,Vinter2000}. 

\subsubsection{Theory of viscosity solutions for Hamilton--Jacobi--Bellman equations}
First, we recall the definition of viscosity solution for HJB equations (see \cite{CL81,CL83,Bar94,BarCap97}).
\begin{definition}[Viscosity solution]\label{sec5:DEFvs}
Let $V:[0,T]\times\R^d\longrightarrow\R$.
\begin{itemize}
\item [(i)]
We say that $V$ is a viscosity supersolution
if $V$ is lower semicontinuous (lsc) and for any $\phi\in C^1((0,T)\times\R^d)$ and
$(t_0,x_0)\in (0,T)\times \R^d$ local minimum point of $V-\phi$,
we have
\begin{equation*}
-\phi_t(t_0,x_0)+H(t_0,x_0,D\phi(t_0,x_0))\geq 0.
\end{equation*}
\item [(ii)]
We say that $V$ is a viscosity subsolution
if $V$ is upper semicontinuous (usc) and for any $\phi\in C^1((0,T)\times\R^d)$ and
$(t_0,x_0)\in (0,T)\times \R^d$ local maximum point of $V-\phi$,
we have
\begin{equation*}
-\phi_t(t_0,x_0)+H(t_0,x_0,D\phi(t_0,x_0))\leq 0.
\end{equation*}
\item [(iii)]
We say that $V$ is a viscosity solution
if it is both a viscosity supersolution and a viscosity subsolution  and the final condition is satisfied:
\begin{equation*}
V(T,x)=\vp(x) \text{ in } \R^d.
\end{equation*}
\end{itemize}
\end{definition}
\begin{rem}
There are also some equivalent definitions which are more local using the super- and sub-differentials, this meaning that the differentials of the test functions can be replaced by some weak differentials of the viscosity solution. See \cite{Bar94,BarCap97} for the definition using the Dini-differentials and \cite{CLSW98} for the definition using the proximal differentials.
\end{rem}

\begin{thm}\label{sec5:existence}
Suppose that \Hyp{0}-\Hyp{2} hold. Then the value function $\VV$ is the unique viscosity solution of \eqref{sec5:HJB} in the sense of Definition \ref{sec5:DEFvs}.
\end{thm}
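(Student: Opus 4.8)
The plan is to split the statement into existence and uniqueness. For existence, I would first use the local Lipschitz continuity of $\VV$ established in the previous proposition: it makes $\VV$ continuous, hence simultaneously lsc and usc, so $\VV$ qualifies to be tested as both a super- and a subsolution in the sense of Definition \ref{sec5:DEFvs}, while the terminal condition $\VV(T,x)=\vp(x)$ is immediate from \eqref{vdef}. The whole content then reduces to deriving the two viscosity inequalities from the Dynamic Programming Principle of Proposition \ref{dpp}.

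For the subsolution inequality, I would take $\phi\in C^1$ and a local maximum point $(t_0,x_0)$ of $\VV-\phi$, normalized so that $\VV(t_0,x_0)=\phi(t_0,x_0)$. Freezing a constant control $\bu\equiv u\in U$, the bound $\VV(t_0,x_0)\le\int_{t_0}^{t_0+h}\ell(s,\bx^\bu(s),u)\,ds+\VV(t_0+h,\bx^\bu(t_0+h))$ coming from \eqref{eq:dpp}, combined with $\VV\le\phi$ near $(t_0,x_0)$, yields
\[
0\le \int_{t_0}^{t_0+h}\ell(s,\bx^\bu(s),u)\,ds+\phi(t_0+h,\bx^\bu(t_0+h))-\phi(t_0,x_0),
\]
and dividing by $h$ and letting $h\to0^+$ gives $\phi_t(t_0,x_0)+D\phi(t_0,x_0)\cdot f(t_0,x_0,u)+\ell(t_0,x_0,u)\ge0$; taking the supremum over $u\in U$ and recalling \eqref{eq:H} produces $-\phi_t+H(t_0,x_0,D\phi)\le0$. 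For the supersolution inequality at a local minimum point of $\VV-\phi$, I would instead exploit the infimum structure of \eqref{eq:dpp}: for each small $h$ and each $\varepsilon>0$ select an $\varepsilon h$-optimal control $\bu^h$, run the analogous computation using $\VV\ge\phi$, and bound the integrand from below by $-H$ via the very definition of $H$ as a supremum. Passing to the limit $h\to0^+$ (the linear growth in \Hyp{1a}(iii) keeps $\bx^{\bu^h}(t_0+h)$ close to $x_0$ uniformly in the control, and $\phi_t$, $D\phi$, $H$ are continuous), and then $\varepsilon\to0$, yields $-\phi_t+H(t_0,x_0,D\phi)\ge0$.

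For uniqueness I would prove a comparison principle: every usc subsolution $V_1$ and every lsc supersolution $V_2$, both of at most linear growth and with $V_1(T,\cdot)\le V_2(T,\cdot)$, satisfy $V_1\le V_2$ on $[0,T]\times\R^d$; applied to two viscosity solutions this forces them to coincide, and in particular identifies $\VV$ as the unique one. The route is the Crandall--Lions doubling-of-variables technique: assuming $\sup(V_1-V_2)>0$, I would maximize a penalized functional of the form $V_1(t,x)-V_2(s,y)-\frac{1}{2\eta}\big(|x-y|^2+|t-s|^2\big)$, augmented by a term controlling growth at infinity (needed since the domain is the whole $\R^d$ and the data have only linear growth), apply the sub- and supersolution inequalities at the maximizer, and let the penalization parameters tend to their limits. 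The contradiction comes from the structural estimate $|H(t,x,q)-H(t,y,q)|\le k_R\,|x-y|(1+\|q\|)$ on each ball $\BB_d(0;R)$, which follows from the local Lipschitz continuity in $x$ of $f$ and $\ell$ in \Hyp{1a}(ii)--\Hyp{1b}(ii) and the compactness of $U$.

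The delicate step, and the one I expect to be the main obstacle, is precisely this comparison principle on the unbounded domain $\R^d$ under only \emph{local} Lipschitz and linear-growth data: one must tune the growth-penalization so that the doubled supremum is actually attained while remaining compatible with the linear growth of $V_1$ and $V_2$, and one must control $H$ uniformly over the a priori unbounded range of the adjoint slot $q=(x-y)/\eta$ generated by the penalization. These points are classical and can be carried out as in \cite{BarCap97}; in the write-up I would reduce the existence part to the DPP computation above and cite that reference for the full comparison argument rather than reproduce it.
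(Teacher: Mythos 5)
Your proposal is correct and follows essentially the same route as the paper, which itself only sketches the argument: continuity of $\VV$ together with the Dynamic Programming Principle yields the sub- and supersolution inequalities (the paper defers the details to \cite{BarCap97}), and uniqueness is obtained from a comparison principle proved by doubling of variables; your two DPP computations (constant control for the subsolution side, $\varepsilon h$-optimal control for the supersolution side) are the standard and correct ones, and you rightly flag the unbounded-domain/linear-growth issue as the delicate point of the comparison argument. The only divergence worth noting is that the paper's primary stated route to uniqueness is the non-smooth-analysis equivalence between the sub-/super-optimality principles and viscosity sub-/supersolutions (citing \cite{Clarke2013book,Vinter2000}), with the comparison principle presented only as an alternative PDE-side argument, which is the one you adopt.
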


The first statement of this theorem is that $\VV$ is a viscosity  solution of
\eqref{sec5:HJB}. The proof of this claim relies on the regularity of the value function (continuity) and on the dynamic programming principle \cite{BarCap97}.
The theorem also claims that the value function is the unique solution of \eqref{sec5:HJB}. This is the consequence of the following  equivalences that can be established by non-smooth analysis (see \cite{Clarke2013book,Vinter2000} for the proof): 
\begin{eqnarray*}
\left\{\begin{array}{l}
V:[0,T]\times\R^d  \mbox{ is usc}, \\
V \mbox{ satisfies the sub-optimality principle} \end{array} \right.
&\Longleftrightarrow& V \mbox{ is a viscosity sub-solution of \eqref{sec5:HJBa}}, \\
\left\{\begin{array}{l}
V:[0,T]\times\R^d  \mbox{ is lsc}, \\
V \mbox{ satisfies the super-optimality principle} \end{array} \right.
&\Longleftrightarrow& V \mbox{ is a viscosity super-solution of \eqref{sec5:HJBa}}.
\end{eqnarray*}
From the point of view of PDEs  and viscosity theory, one can also obtain uniqueness by using a general theorem on the comparison principle  which can be stated as follows.
\begin{thm}
Let $V_1$ be a subsolution of \eqref{sec5:HJB} and $V_2$ be a supersolution of \eqref{sec5:HJB} with $V_1(T,x)\leq \vp(x)\leq V_2(T,x)$ for $x\in\R^d$. Then for any $t\in[0,T]$, $x\in\R^d$,
\[
V_1(t,x)\leq V_2(t,x).
\]
\end{thm}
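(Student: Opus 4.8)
The plan is to prove the comparison principle by the classical doubling-of-variables technique of Crandall and Lions, arguing by contradiction. Since \eqref{sec5:HJBa} is a \emph{first-order} equation, no second-order (Jensen--Ishii) machinery is needed: it suffices to test $V_1$ and $V_2$ against the $C^1$ penalization of two decoupled copies of the variables. I would work, as is standard and consistent with the linear growth of the value function coming from \eqref{eq:traj_gronwall} and \Hyp{1b}--\Hyp{2}, in a class of sub/supersolutions with at most linear growth, and suppose for contradiction that
\[
M:=\sup_{(t,x)\in[0,T]\times\R^d}\big(V_1(t,x)-V_2(t,x)\big)>0 .
\]

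First I would strictify in time and localize in space. For $\eta>0$ the function $\hat V_1:=V_1-\eta/t$ is an \emph{upper semicontinuous strict} subsolution, in the sense that any admissible test yields $-\phi_t+H(t,x,D\phi)\le -\eta/T^2$, and moreover $\hat V_1\to-\infty$ as $t\to0^+$, which rules out a maximizing time at $t=0$. For parameters $\eps,\beta>0$ I then set on $([0,T]\times\R^d)^2$
\[
\Phi(t,x,s,y):=\hat V_1(t,x)-V_2(s,y)-\frac{|x-y|^2}{2\eps}-\frac{(t-s)^2}{2\eps}-\beta\big(|x|^2+|y|^2\big).
\]
The term $-\beta(|x|^2+|y|^2)$ guarantees, since $\hat V_1$ is usc, $V_2$ is lsc and both grow at most linearly, that $\Phi$ attains its supremum at some $(t_\eps,x_\eps,s_\eps,y_\eps)$. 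Choosing $\eta$ small so that $\sup(\hat V_1-V_2)>0$, the maximizers converge along a subsequence to a diagonal maximizer of $\hat V_1-V_2$; the terminal ordering $V_1(T,\cdot)\le\vp\le V_2(T,\cdot)$, which forces $\hat V_1-V_2\le-\eta/T<0$ at $t=T$, together with the blow-up of $\hat V_1$ at $t=0$, places this point in $(0,T)$, so that $t_\eps,s_\eps\in(0,T)$ for $\eps,\beta$ small. The standard penalization estimates then give $|x_\eps-y_\eps|^2/\eps\to0$, $(t_\eps-s_\eps)^2/\eps\to0$ and $\beta(|x_\eps|^2+|y_\eps|^2)\to0$, with the points confined to a fixed compact set.

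Next I would apply Definition~\ref{sec5:DEFvs} at the two maximizers. Writing $p_\eps:=(x_\eps-y_\eps)/\eps$ and $\tau_\eps:=(t_\eps-s_\eps)/\eps$, the strict subsolution inequality for $\hat V_1$ and the supersolution inequality for $V_2$ read
\[
-\tau_\eps+H\big(t_\eps,x_\eps,p_\eps+2\beta x_\eps\big)\le -\tfrac{\eta}{T^2},\qquad -\tau_\eps+H\big(s_\eps,y_\eps,p_\eps-2\beta y_\eps\big)\ge 0,
\]
so that, subtracting and cancelling $\tau_\eps$, $\ \eta/T^2\le H(s_\eps,y_\eps,p_\eps-2\beta y_\eps)-H(t_\eps,x_\eps,p_\eps+2\beta x_\eps)$. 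Splitting the right-hand side into space, gradient and time increments, the space part is controlled by the structural estimate which follows at once from \Hyp{1a}(ii) and \Hyp{1b}(ii),
\[
|H(t,x,q)-H(t,y,q)|\le\sup_{u\in U}\big(\|f(t,x,u)-f(t,y,u)\|\,\|q\|+|\ell(t,x,u)-\ell(t,y,u)|\big)\le k_R\,|x-y|\,(1+\|q\|),
\]
hence is of order $|x_\eps-y_\eps|+|x_\eps-y_\eps|^2/\eps\to0$; the gradient part is controlled by the Lipschitz dependence of $H$ in $q$ (with constant bounded by $c_f(1+\|x\|)$ from \Hyp{1a}(iii)) and is of order $\beta(|x_\eps|+|y_\eps|)\to0$. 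Letting $\eps\to0$ then $\beta\to0$ would force the right-hand side to $0$, contradicting $\eta/T^2>0$, whence $M\le0$.

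I expect the main obstacle to be the \emph{time} increment $H(s_\eps,y_\eps,q)-H(t_\eps,y_\eps,q)$, bounded only by $\omega_f(|t_\eps-s_\eps|)\,\|q\|+\omega_\ell(|t_\eps-s_\eps|)$ with $\omega_f,\omega_\ell$ the moduli of continuity of $f,\ell$ in $t$: reconciling the vanishing of $\omega_f(|t_\eps-s_\eps|)$ with the possible blow-up $\|p_\eps\|=O(\eps^{-1/2})$ of the penalization gradient is transparent when $f,\ell$ are locally Lipschitz in $t$, but requires extra care under the mere joint continuity of \Hyp{1a}(i)--\Hyp{1b}(i). For this reason, in the present control setting I would in fact favour the shortcut afforded by the equivalences stated just above the theorem: $V_1$ (subsolution, usc) satisfies the sub-optimality principle and $V_2$ (supersolution, lsc) the super-optimality principle. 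Fixing $(t,x)$, super-optimality produces one $(\bx^\bu,\bu)\in\XX_{[t,T]}(x)$ with $V_2(t,x)\ge V_2(T,\bx^\bu(T))+\int_t^T\ell(s,\bx^\bu(s),\bu(s))\,ds\ge\vp(\bx^\bu(T))+\int_t^T\ell(s,\bx^\bu(s),\bu(s))\,ds$, while sub-optimality along that \emph{same} trajectory gives $V_1(t,x)\le V_1(T,\bx^\bu(T))+\int_t^T\ell(s,\bx^\bu(s),\bu(s))\,ds\le\vp(\bx^\bu(T))+\int_t^T\ell(s,\bx^\bu(s),\bu(s))\,ds$; chaining the two yields $V_1(t,x)\le V_2(t,x)$ directly, sidestepping the time-regularity issue entirely.
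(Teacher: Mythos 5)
The paper does not actually write out a proof of this theorem: it only remarks that ``the classical proof of the comparison principle is based on the variable doubling technique'' and points to \cite{CL81,CL83,Bar94,BarCap97}. Your primary sketch is precisely that classical doubling argument, and it is essentially sound: the strictification $V_1-\eta/t$ (correctly oriented for this terminal-value problem, pushing the maximizer away from $t=0$ where Definition~\ref{sec5:DEFvs} gives no information), the quadratic localization, the two viscosity inequalities at the doubled maximizer, and the structural estimates in $x$ and $q$ drawn from \Hyp{1a}--\Hyp{1b} are all the right ingredients. You also correctly isolate the one genuinely delicate point, namely the time increment $H(s_\eps,y_\eps,q)-H(t_\eps,y_\eps,q)$ when $f,\ell$ are merely continuous in $t$ while $\|p_\eps\|=O(\eps^{-1/2})$; the standard repair, which you do not spell out, is to penalize time with a separate parameter $(t-s)^2/(2\alpha)$ and send $\alpha\to0$ \emph{first} at fixed $\eps$ (so that $|t_\eps-s_\eps|\to0$ while $p_\eps$ stays bounded), or to assume a modulus of the form $\omega(|t-s|)(1+\|x\|)$ for $f$ in time. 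Your fallback argument via the sub-/super-optimality principles is a genuinely different and, in this control setting, cleaner route: it is complete and correct as written, it uses only the equivalences the paper itself states just above the theorem (subsolution $\Leftrightarrow$ usc $+$ sub-optimality, supersolution $\Leftrightarrow$ lsc $+$ super-optimality, whose proofs are delegated to \cite{Clarke2013book,Vinter2000}), and it entirely avoids the time-regularity issue at the cost of trading a purely PDE argument for one that exploits the representation of $H$ as a control Hamiltonian. The doubling proof buys generality (it works for Hamiltonians with no underlying control structure, which is why the paper presents it as the ``PDE point of view''); the optimality-principle proof buys brevity and robustness under the weak regularity hypotheses \Hyp{1a}(i), \Hyp{1b}(i). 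Either route establishes the theorem.
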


The classical proof of the comparison principle is based on the variable doubling technique. We  refer to \cite{CL81,CL83,Bar94,BarCap97} for more details.     

\if{\color{red}{It consists of considering 
\[
\sup_{t,s,x,y}\left\{v_1(t,x)-v_2(s,y)-\frac{|t-s|^2+|x-y|^2}{\eps^2}\right\},
\]
where $\eps>0$. It is a regularization technique, called sup/inf-convolution for sub/super-solutions. Then the regularization of $v_1$ and $u_2$ can be considered as the viscosity test functions for $v_2$ and $v_1$ respectively, and the comparison result is deduced by the information obtained through the viscosity tests.}}\fi

\subsection{Other unconstrained control problems and their HJB formulation} \label{Other_OCP}

In addition to the Bolza problem, which has been taken here as a model, various other formulations have been considered for optimal control problems, in particular without a final time, or in which the final time is itself a parameter to be optimally chosen. We briefly review some of these formulations, while a more extensive discussion can be found, for example, in \cite{BarCap97}.

\paragraph{Minmax control problems.}
In this class of control problems, the cost is not defined in integral form. More precisely, the control problem
 reads as 
 $$\cV^\#(t,x):=\min\Big\{ \varphi(\bx(T)) \bigvee \max_{s\in[t,T]}\Psi(s,\bx(s))\Big| (\bx,\bu)\in \XX_{[t,T]}(x)\Big\}.$$
 Here, the cost function is the maximum between the final cost and a maximum running cost along the trajectory.  Relation between minmax control problems and state constrained control problems have been noticed and exploited to derive necessary conditions of optimal trajectories, see \cite[Chapter 9]{Vinter2000}.  In Hamilton--Jacobi approach, minmax control problems have been also analyzed in \cite{Barron}. It has been shown that the value function 
 is Lipschitz continuous and can be characterized by the HJB variational inequality
 \begin{subequations}\label{eq.HJB.minmax}
 \begin{eqnarray}
& & \min\big(\partial_t\cV^\#(t,x) + H^\#(t,x,D\cV^\#(t,x)),\cV^\#(t,x)-\Psi(t,x)\big) = 0\quad \mbox{on } 
		[0,T)\times \R^d, \\
& & \cV^\#(T,x)=\varphi(x)\vee \Psi(T,x) \quad \mbox{for } x\in \R^d.
\end{eqnarray}
\end{subequations}
In this inequality, the running cost function plays the role of an ``obstacle''. The value function 
satisfies $\cV^\#(t,x)\geq \Psi(t,x)$. It is also a super-solution of the equation:
\begin{equation}\label{eq.soussol}
\partial_tv(t,x) + H^\#(t,x,D_xv(t,x)) =  0 \quad \mbox{on } [0,T)\times \R^d.
\end{equation}
However, $\cV^\#$ is a sub-solution of \eqref{eq.soussol} only in open sets where 
$\cV^\# < \Psi$.
 
\paragraph{Infinite horizon.}

{\em Infinite horizon problems} are intended to model optimal control strategies in the long-time behaviour.
Assume that both the dynamics $f$ and the running cost $\ell$ do not depend on time, so that
\begin{equation}\label{dyn_not}
\dot\bx(s) = f(\bx(s),\bu(s)),
\end{equation}
with the initial condition $\bx(0)=x$.
The {\em discounted infinite horizon} cost functional is defined as
\[
J(x,\bu) = \int_0^\infty\ell(\bx(s),\bu(s))e^{-\lambda s}ds,
\]
where, in addition to the basic assumptions, we require that $\lambda>0$, and, for simplicity, that $\ell$ is uniformly bounded. According to the definition of the cost functional, it is possible to define a value function, which will depend in this case on $x$ alone:
\begin{equation*}
  \cV^\infty(x) :=  \inf\bigg\{J(x,\bu)\ \bigg| (\bx,\bu)\in \XX_{[0,\infty)}(x)\bigg\}.
\end{equation*}
The value function can still be characterized as the viscosity solution of a (stationary) Hamilton--Jacobi--Bellman equation, which takes the form
\begin{equation}\label{ih}
\lambda\cV^\infty(x) + H(x,D\cV^\infty(x)) = 0,
\end{equation}
in which $x\in\R^d$, and the Hamiltonian function $H$ is defined by \eqref{eq:H}.
As for the regularity of the value function, the basic assumptions imply uniform continuity of the value function. H\"older regularity of $\cV^\infty$ holds under the assumptions of boundedness for $\ell$, and global Lipschitz continuity for both $\ell$ and $f$; in addition, $\cV^\infty$ is itself globally Lipschitz continuous if $\lambda$ is larger than the Lipschitz constant of $f$ (see \cite{BarCap97}).

\paragraph{Free final time.}
In {\em free final time problems}, also termed as {\em optimal stopping time problems}, the endtime $\theta$ of the control interval is itself a free parameter to be chosen in an optimal way. In the simplest case, the dynamics is set in the form \eqref{dyn_not} and the cost functional in the form
\[
J(x,(\bu,\theta)) = \int_0^\theta\ell(\bx(s),\bu(s))e^{-\lambda s}ds + e^{-\lambda \theta}\varphi(\bx(\theta)).
\]
Accordingly, the value function is defined as
\begin{equation*}
  \cV^f(x) := \inf\bigg\{J(x,(\bu,\theta))\ \bigg| (\bx,\bu)\in \XX_{[0,\infty)}(x), \theta\geq 0\bigg\}.
\end{equation*}
In this case, the HJB equation is stationary, but comes in the form of an obstacle problem:
\[
\max\big(\lambda\cV^f(x) + H(x,D\cV^f(x)),\cV^f(x)-\varphi(x)\big) = 0.
\]
The state space is then split in two (possibly overlapping) sets: in the first one the first argument of the max vanishes, so that
\[
\lambda\cV^f(x) + H(x,D\cV^f(x)) = 0,
\]
and the optimal control requires to keep the system evolving; in the second set the second argument of the max vanishes, and therefore
\[
\cV^f(x)=\varphi(x).
\]
As soon as the state of the system enters this set, the optimal strategy requires to stop the system, paying the stopping cost $\varphi(\bx(\theta))$. Under the basic assumptions, the value function is uniformly continuous (see \cite{BarCap97}).

\paragraph{Minimum time.}
For simplicity, we assume again in this paragraph that the dynamics does not depend explicitly on time, that is $f(s,x,u)=f(x,u)$. For minimum time function with time-dependent dynamics, we refer to \cite{BBZ10} and the references therein.

In {\em minimum time problems}, the goal is to drive the state of the system, in the shortest time, to the final  closed set  $\mathcal C:=\{x\in \R^d, g_f(x)\leq 0\}$ (called the target).  
In general, the possibility of driving the state to the target in finite time may not be ensured for each initial state; this leads to define the so-called {\em backward reachable set} $\mathcal R$ as the set of initial states $x$ which can be driven to the target in finite time. 
The minimum time control  problem is formulated as
\begin{equation}\label{pb.tmin} \cT(x):=\inf \Big\{t \ \Big| g_f(\bx(t)) \ \ (\bx,\bu)\in\XX_{[0,\infty)}(x)\Big\},
\end{equation}
with the convention that $\cT(x)=+\infty$ when there is no trajectory that starts from $x$ and reaches the target in finite time.  
It is not difficult to show that the value function satisfies a dynamic programming principle (DPP) in the form
$$\cT(x)=\inf\Big\{\cT(\bx(h))+h, \ \ (\bx,\bu)\in\XX_{[0,\infty)}(x)\Big\}.$$
When the backward reachable set is open and the minimum time function is continuous, then the DPP leads to a characterization of $\cT$ by the  HJB equation
\[
 \sup_{u \in U}\big\{-f(x,u)\cdot D\cT(x)\big\}=1,
\]
complemented with the boundary condition
\[
\cT(x)=0 \ \ \mbox{for } x\in \mathcal C \qquad \mbox{and} \quad
\displaystyle\lim_{x\to z}\cT(x)=+\infty  \mbox{ for }  z\in\partial\mathcal R.
\]
The drawback of this formulation is that the reachable set  should be known in advance. An alternative formulation makes use of the so-called {\em Kru\v{z}kov transformation}
\[
\cV(x) =
\begin{cases}
1 & \text{ if } \cT(x)=+\infty \\
1-e^{-\cT(x)} & \text{ else, }
\end{cases}
\]
which solves (yet when the time function $\cT$ is continuous) an auxiliary infinite horizon problem of the form \eqref{ih}, with $\lambda=1$, $\ell\equiv 1$, and the boundary condition
\[
\cV(x)=0 \quad \mbox{for } x\in\mathcal \cC.
\]
In this case, the reachable set is obtained as a byproduct of the computation of the value function $\cV$, as
\[
{\mathcal R} = \big\{x\in\R^d\ |\ \cV(x)<1\big\}.
\]
Continuity of the function $\cT$ is closely related to controllability properties satisfied by the system  in a neighbourhood of $\mathcal C$ and more precisely, to the so-called 
{\it Small-Time Local Controllability}, see \cite[Chapter IV]{BarCap97}.  When the target is smooth enough (for instance, assume here that $g_f:\R^d\to \R$ is $C^2$) with a compact boundary,  a necessary condition for the continuity of the minimum time function is given by the condition
\begin{equation}\label{eq.Petrovrelax}
\min_{u\in U}f(x,u)\cdot \nabla g_f(x) \leq 0.
\end{equation}
 This condition is restrictive  and excludes a large class of systems with drift.
An equivalent condition to the Lipschitz continuity of the minimal time function is the
 {\em  Petrov condition} 
\begin{equation}\label{eq.Petrov}
\min_{u\in U}f(x,u)\cdot \nabla g_f(x) <0,
\end{equation}
which  is even more restrictive than \eqref{eq.Petrovrelax}.
 From a geometrical point of view,  Petrov condition states that at every point of a neighborhood of the target there exists an
admissible control such that the corresponding trajectory points towards the target. 

In general, when $g_f$ satisfies assumption \Hyp{3}, the set $\cC$ is closed and the controllability conditions \eqref{eq.Petrovrelax}-\eqref{eq.Petrov} might  not be  satisfied. A practical approach to compute the minimum time function and the corresponding optimal trajectories is based on the {\em level set method},  introduced by Osher and Sethian \cite{osh-set-88}.  Consider the final cost function as 
$$ \Phi(x):= \max\Big(g_{1,f}(x), \cdots, g_{m_f,f}(x)\Big)\quad \mbox{for } x\in \R^d,$$
where $(g_{1,f}, \cdots, g_{m_f,f})$ are the components of the function $g_f$. 
The level set approach consists of considering
the value function $V$  associated to the {\em Mayer problem}
with final cost $\Phi$, defined by
\be
\label{pb:tmin_LS} 
V(t,x):= \min\{\Phi(\bx(t)),\ (\bx,\bu)\in \XX_{[0,t]}(x)\}. 
\end{eqnarray}
The value function $V$ can be characterized by an HJB equation as in Theorem~\ref{sec5:existence}.  This function is Lipschitz continuous,  while the minimum time function may be discontinuous.
Besides, one can notice immediately that if $V(t,x)\leq 0$, then 
there exists an admissible pair $(\bx,\bu)\in \XX_{[0,t]}(x)$ such that 
$\Phi(\bx^\bu_x(t))$, which means that $(\bx,\bu)$ satisfies the final constraint.  More precisely, the set of positions from where it is possible to reach the target at time $t$ is given by
$$\cR(t):=\{x\in \R^d\ \mid\ \exists (\bx,\bu)\in \XX_{[0,t]}(x), g_f(\bx(t))\leq 0\} =
\{x\in \R^d\mid V(t,x)\leq 0\}.$$
Therefore, the value function $V$ gives valuable information on the minimum time function and the corresponding trajectories, without any controllability assumption.
\begin{thm}
\label{theo:LS}
Suppose that \Hyp{0}-\Hyp{3} hold.
 Let $V$ be the value function associated to problem \eqref{pb:tmin_LS}. 
 Then, for every $x\in \R^d$, 
 \begin{eqnarray}
 & & \cT(x):=\min \{t\mid V(t,x)\leq 0\}.\label{Tmin_LS}
 \end{eqnarray}
Furthermore,  for $x\in \R^d$,  any optimal trajectory for the minimal time problem \eqref{pb.tmin} is also an optimal trajectory of the 	control problem \eqref{pb:tmin_LS} 
where $t$ is fixed to the minimum time given by \eqref{Tmin_LS}.
\end{thm}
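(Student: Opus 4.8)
The plan is to use the backward reachable set $\cR(t)=\{x : V(t,x)\le 0\}$ as a bridge and to analyze, for fixed $x$, the one–variable map $t\mapsto V(t,x)$. The guiding idea is that $V(\cdot,x)$ should be strictly positive on $[0,\cT(x))$ and vanish exactly at $t=\cT(x)$, so that the first time the level set $\{V\le 0\}$ is entered is precisely the minimum time. Throughout I would use that optimal trajectories exist for both the minimum–time problem \eqref{pb.tmin} and the Mayer problem \eqref{pb:tmin_LS}; this is where compactness of the admissible set of trajectories is invoked, i.e.\ Remark~\ref{rem.existence} (hence, in full rigour, assumption \Hyp{4}), guaranteeing that the infima defining $\cT$ and $V$ are attained.

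First I would treat the regime $t<\cT(x)$. By the definition of $\cT(x)$, no admissible pair $(\bx,\bu)\in\XX_{[0,t]}(x)$ can satisfy $\bx(t)\in\cC$ for such $t$ (otherwise the target would be reached before the minimum time; restriction and extension of trajectories make $\XX_{[0,t]}$ and $\XX_{[0,\infty)}$ interchangeable here). Since $\cC=\{g_f\le 0\}$ and $\Phi=\max_i g_{i,f}$, this gives $\Phi(\bx(t))>0$ for every competitor, and because the minimum in \eqref{pb:tmin_LS} is attained, $V(t,x)>0$ for all $t<\cT(x)$. Next, assuming $\cT(x)<\infty$, an optimal minimum–time trajectory $\bx^\ast$ reaches the closed target at $t=\cT(x)$, so $\Phi(\bx^\ast(\cT(x)))\le 0$ and therefore $V(\cT(x),x)\le 0$. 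Invoking the Lipschitz continuity of $V$ in $t$ — which follows from the linear growth bound \Hyp{1a}(iii), the Gronwall estimate \eqref{eq:traj_gronwall}, and the local Lipschitz continuity of $g_f$ in \Hyp{3} — continuity yields $V(\cT(x),x)=\lim_{t\uparrow\cT(x)}V(t,x)\ge 0$, whence $V(\cT(x),x)=0$. Combining the two facts, $\{t : V(t,x)\le 0\}$ is disjoint from $[0,\cT(x))$ and contains $\cT(x)$, so its minimum is exactly $\cT(x)$, which is \eqref{Tmin_LS}. The degenerate case $\cT(x)=+\infty$ is consistent: no trajectory ever meets $\cC$, so $V(t,x)>0$ for all $t$, the level set is empty, and $\min\emptyset=+\infty$ matches.

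For the second assertion, let $\bx^\ast$ be any optimal trajectory for the minimum–time problem, so $\bx^\ast(\cT(x))\in\cC$ and thus $\Phi(\bx^\ast(\cT(x)))\le 0$. On the other hand $V(\cT(x),x)$ is the minimum of $\Phi(\bx(\cT(x)))$ over all admissible length–$\cT(x)$ trajectories, so $\Phi(\bx^\ast(\cT(x)))\ge V(\cT(x),x)=0$. These two inequalities force $\Phi(\bx^\ast(\cT(x)))=V(\cT(x),x)$, i.e.\ $\bx^\ast$ realizes the minimum in \eqref{pb:tmin_LS} at $t=\cT(x)$, which is exactly the claim.

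The main obstacle is the attainment of the two minima, and it is a genuine one rather than a formality. The implication $V(t,x)\le 0\Rightarrow x\in\cR(t)$ can fail when the infimum in \eqref{pb:tmin_LS} equals $0$ without being attained: a minimizing sequence may approach the closed target only in the limit, so that $V=0$ is recorded although no single admissible trajectory meets $\cC$ at time $t$. The remedy is precisely the compactness of $\cS_{[0,t]}$ from Remark~\ref{rem.existence}: combined with closedness of $\cC$, it produces a limiting trajectory that attains the minimum and lies in $\cC$, simultaneously giving existence of an optimal minimum–time trajectory and upgrading $V(t,x)>0$ for $t<\cT(x)$. The remaining ingredient, continuity of $t\mapsto V(t,x)$, is routine and I would obtain it by comparing a trajectory of length $t$ with its extension to length $t+\delta$, using the a priori bound on $\|\bx\|$ from \eqref{eq:traj_gronwall} to control the endpoint displacement and then the Lipschitz constant of $g_f$.
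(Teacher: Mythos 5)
The paper states Theorem~\ref{theo:LS} without proof (it is presented as a known fact of the level-set approach, with the argument deferred to the literature), so there is no in-paper proof to compare against; judged on its own, your argument is correct and is the natural one: positivity of $V(\cdot,x)$ on $[0,\cT(x))$, the inequality $V(\cT(x),x)\le 0$ from a time-optimal trajectory, continuity in $t$ to close the gap, and the squeeze $V(\cT(x),x)\le\Phi(\bx^\ast(\cT(x)))\le 0\le V(\cT(x),x)$ for the second assertion. Your most valuable observation is the one about attainment: with only \Hyp{0}--\Hyp{3} the reachable set at time $t$ need not be closed, the infimum in \eqref{pb:tmin_LS} may equal $0$ without being attained for some $t<\cT(x)$, and then $\min\{t\mid V(t,x)\le 0\}$ would undershoot $\cT(x)$; so the ``$\min$'' in the theorem implicitly relies on the compactness of Remark~\ref{rem.existence}, i.e.\ on \Hyp{4} (or on a convexified/relaxed reading of the trajectory set), exactly as you say. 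Two minor points: (i) the case $\cT(x)=0$ (when $x$ lies in the interior of $\cC$) breaks your identity $V(\cT(x),x)=0$, since then $V(0,x)=\Phi(x)$ may be strictly negative; the conclusion \eqref{Tmin_LS} and the second assertion still hold trivially, but the sentence ``$V(\cT(x),x)=0$'' should be restricted to $\cT(x)>0$ (where it also follows directly from continuity of the trajectory: an endpoint in the interior of $\cC$ at time $\cT(x)$ would put the trajectory in $\cC$ slightly earlier); (ii) for $V(\cT(x),x)\le 0$ you do not actually need an optimal minimum-time trajectory --- taking $t_n\downarrow\cT(x)$ with trajectories hitting $\cC$ at $t_n$ and using the Lipschitz continuity of $V(\cdot,x)$ suffices, which slightly reduces the load carried by the compactness assumption.
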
 
The level set approach provides an  effective way to compute the minimum time to reach a target without assuming any specific regularity.  Moreover, it has been shown that the level set method can be generalized to minimum time problems with state constraints (for instance, the case when the trajectory should avoid some obstacles, see Example~1). In this case, the minimum time function is defined as 
$$ \cT(x):=\inf \Big\{t\ \Big| \ \ \exists (\bx,\bu)\in\XX_{[0,\infty)}(x)\ \ \mbox{with } g_f(\bx(t))\leq 0,  \ \ 
\mbox{and } g(\bx(s))\leq 0 \ \mbox{on } [0,t]\Big\}.$$
To use the level set approach in this context, the definition of the value function
should be adapted and defined as
$$
V(t,x):= \min\left\{\Phi(\bx(t))\bigvee \max_{s\in[0,t]}\Psi(\bx(s))\ \mid\ (\bx,\bu)\in \XX_{[0,t]}(x)\right\},
$$
with $\Psi(x):=\max(g_1(x),\ldots,g_{m_g}(x))$
and $\Phi(x):=\max(g_{f,1}(x),\ldots,g_{f,m_f}(x))$ for every $x\in \R^d$.
Here, the value function $V$ is again Lipschitz continuous,  while the minimum time function may be discontinuous. Besides, if $V(t,x)\leq 0$, then 
there exists an admissible pair $(\bx,\bu)\in \XX_{[0,t]}(x)$ such that 
$\Phi(\bx(t))\leq 0$ and
$\max_{s\in[0,t]}\Psi(\bx(s))\leq 0$, which means that $(\bx,\bu)$ satisfies the final and pointwise state constraints.  
With this new definition of the value function $V$,  the statement of Theorem~\ref{theo:LS} remains valid
in the case with obstacles. In particular,  the minimum time value and the corresponding optimal trajectories can be obtained  form the value function $V$ without assuming any  controllability hypothesis.   Finally, notice that the value function $V$ corresponds to a minmax problem, and its characterization is given by the HJB inequality \eqref{eq.HJB.minmax}.

\subsection{Constrained Bolza problems}
In this section, we consider a control problem with state constraints.  We  denote by $\cK$ the set of constraints $\cK:=\{x\in \R^d, g(x)\leq 0\}$, and define the set of
admissible trajectories by
\begin{eqnarray*}
\XX^{g}_{[t,T]}(x):=\big\{(\bx,\bu)\in \XX_{[t,T]}(x)\ \mid\ 
g(\bx(s))\leq 0 \text{ for }s\in[t,T]\big\}.
\end{eqnarray*}
We adopt the convention $\cV(t,x)=+\infty$,
when the set of admissible trajectories is empty, i.e., 
$\XX^{g}_{[t,T]}(x)=\emptyset$. 
Similarly to the unconstrained case, the value function $\cV$ satisfies a  dynamic programming principle that can be stated as follows.
\begin{itemize}
  \item [i)] For all $x\in\cK$,
  \begin{equation*}
  \cV(T,x)=\vp(x).
  \end{equation*}
  \item [ii)] Dynamic programming principle: for all $x\in\cK$,
  $\tau\in[0,T]$ and $h\in[0,T-\tau]$, we have:
  \begin{equation}
  \cV(t,x)=\inf_{(\bx,\bu)\in \XX^{g}_{[t,T]}(x)}\cV(t+h,\bx(t+h)) +\int_t^{t+h}\ell(s,\bx(s),\bu(s))\,ds.
  \end{equation}
\end{itemize}

\subsubsection{Inward pointing condition}
To analyze the properties of the value function $\VV$, it is important first to understand the structure of the set the admissible trajectories. This structure depends on an interplay between the dynamics of the 
state equation and the set of constraints $\cK$. 
Assume  in this section that $g:\R^d\to \R$ is $C^{1,1}$ function, and its zero-level set is suitably smooth. Consider  the following controllability assumption:

\textbf{(HK1)} Inward pointing qualification (IPQ) condition: For every $R>0$, 
there exists  $\beta>0$ and $\rho>0$ such that for every $t\in [0,T]$,
\begin{equation}
\min_{u\in U} f(t,y,u)\cdot\nabla g(y)<-\beta, \qquad \forall y\in \partial \cK\cap \mathbb{B}(0,R).
\end{equation}

The IPQ condition states that the set of constraints $\cK$ has a smooth structure and that on each point of the boundary $\cK$ it is possible to find an admissible control that allows the trajectory to stay in the set $\cK$.  So, the IPQ condition  implies that the set $\cK$ is weakly invariant. Moreover, the IPQ condition  guarantees even a nicer property, called {\bf (NFT) Neighbouring feasible trajectories } principle, whose proof can be found in \cite[Theorem 2.1]{BetBreVin2010}. 
\begin{lem}\label{remin}
Assume {\bf (HK1)}.  Let $(t_0,x_0)\in [0,T]\times \cK$ and let $(\by,\bv)\in \XX_{[t_0,T}(x_0)$. There exists a constant $C$ and a feasible pair $(\bx,\bu)\in \XX_{[t_0,T]}(x_0)$ such that
\[
\bx(t)\in\cK,\ \forall\,t\in[t_0,T],\ \|\by-\bx\|_{W^{1,1}([t_0,T];\R^d)}\leq Cg^+( \by(\cdot)),
\]
where
$\displaystyle 
g^+( \by(\cdot))=\max_{t\in[t_0,T]}\{\max(g( \by(t)),0)\}.
$
\end{lem}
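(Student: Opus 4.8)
The plan is to construct the feasible trajectory $\bx$ by perturbing the given reference $\by$ whenever it violates the constraint, using the inward-pointing control guaranteed by \textbf{(HK1)} to push the state back into $\cK$, and then to control the size of this perturbation in the $W^{1,1}$ norm by the amount of violation $g^+(\by(\cdot))$. First I would fix $R>0$ large enough that the whole reference trajectory $\by([t_0,T])$, together with a tube of radius depending on $g^+$ around it, stays in $\BB(0,R)$; this is possible because of the Gronwall estimate \eqref{eq:traj_gronwall}, which bounds trajectories a priori, and it lets me invoke \textbf{(HK1)} with the corresponding $\beta,\rho$. On this ball the dynamics $f$ is Lipschitz in $x$ with some constant $k_R$ from \Hyp{1a}(ii), and this uniform Lipschitz bound is what will ultimately close the Gronwall-type estimate between the reference and the perturbed trajectory.

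The core construction is as follows. Where $\by$ already satisfies $g(\by(t))\le 0$ one keeps the original control $\bv$; where the constraint is violated or nearly active, one switches to a corrective control $\bar u(t)$ achieving $f(t,\by(t),\bar u(t))\cdot\nabla g(\by(t))<-\beta$. The inward-pointing inequality then forces $t\mapsto g(\bx(t))$ to decrease at a definite rate whenever it is positive, so the violation is driven down to zero and kept there. The quantitative heart of the argument is a differential inequality for $g(\bx(t))$: along the corrective arcs one shows $\frac{d}{dt}g(\bx(t))\le -\beta + C\|\bx(t)-\by(t)\|$, using the $C^{1,1}$ regularity of $g$ (so $\nabla g$ is Lipschitz) together with the Lipschitz bound on $f$. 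Simultaneously, since $\bx$ and $\by$ solve the same dynamics up to the control switch, $\frac{d}{dt}\|\bx(t)-\by(t)\|\le k_R\|\bx(t)-\by(t)\| + (\text{contribution from the control difference})$, and Gronwall turns this into a bound proportional to the total ``time spent correcting,'' which in turn is controlled by $g^+(\by(\cdot))$.

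The hard part, and the step I expect to be the main obstacle, is making the corrective-control switching rigorous and measurable while simultaneously keeping both the feasibility of $\bx$ (i.e.\ $g(\bx(t))\le 0$ for all $t$) and the linear dependence of $\|\by-\bx\|_{W^{1,1}}$ on $g^+(\by(\cdot))$. The delicate issue is that the $L^1$ norm of $\dot\bx-\dot\by$ must be bounded by $C\,g^+$, and this requires that the measure of the set of correction times, and the magnitude of the control perturbation there, both scale linearly with the maximal violation rather than, say, with its square root or with $T$. Controlling this demands a careful feedback-type estimate in which the inward rate $\beta$ provides a fixed ``drift toward $\cK$'' while the violation to be cancelled is at most $g^+$; the ratio of these two quantities bounds the correction time, and the uniform bounds on $\|f\|$ from \Hyp{1a}(iii) on the ball $\BB(0,R)$ bound the velocity discrepancy. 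Rather than reprove this from scratch, I would appeal directly to the neighbouring feasible trajectories theorem \cite[Theorem 2.1]{BetBreVin2010}, which packages exactly this estimate under the IPQ condition; the role of my argument is then to verify that the hypotheses of that theorem are met in the present setting, namely the uniform inward-pointing bound on $\partial\cK\cap\BB(0,R)$, the $C^{1,1}$ smoothness of $g$, and the standing assumptions \Hyp{0}--\Hyp{1a} on the dynamics.
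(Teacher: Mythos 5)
Your proposal is correct and ends up exactly where the paper does: the paper gives no self-contained proof of this lemma, stating only that it is the Neighbouring Feasible Trajectories principle ``whose proof can be found in \cite[Theorem 2.1]{BetBreVin2010}'', which is precisely the reference you ultimately invoke. Your preliminary sketch of the underlying construction (corrective inward-pointing controls, the differential inequality for $g(\bx(t))$, and the Gronwall estimate linking the correction time to $g^+(\by(\cdot))$) is a faithful account of how such results are proved, but it plays the same role as the paper's citation rather than replacing it.
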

The NFT property states the existence of an admissible control-trajectory pair satisfying the state constraints, close to an admissible pair that violates the state constraints. This property is the key point to  ensure continuity of the value function and to provide a characterization of the
value function in terms of viscosity solutions of the relevant  HJB equation on $\cK$ (see \cite{Soner-1986}).
\begin{thm}\label{continuity}
Assume  \textbf{(HK1)}. Then,
the value function $\cV(\cdot,\cdot)$ is uniformly continuous and bounded on
$[0,T]\times\cK$. Moreover, it is the unique constrained viscosity solution of the 
following HJB equation:
\begin{subequations} 
\label{eq.constrainedviscosity}
\begin{eqnarray}
& & -\partial_t\VV(t,x)+H(t,x,D_x\VV(t,x))\geq 0\quad \mbox{on } [0,T[\times \cK,\\
&& -\partial_t\VV(t,x)+H(t,x,D_x\VV(t,x))\leq 0 \quad \mbox{on }
[0,T[\times \mathop{\cK}\limits^{\circ},
\\
& & \VV(T,x)=\varphi(x),
\end{eqnarray}
\end{subequations}
with $\cV(t,x)=+\infty$ for every $x\in \R^d\setminus\cK$.
\end{thm}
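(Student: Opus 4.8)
The plan is to split the statement into three parts: the regularity of $\cV$ (boundedness and uniform continuity), the viscosity sub/supersolution inequalities together with the terminal condition, and finally uniqueness via a comparison principle. Boundedness is the easiest: on $[0,T]\times(\cK\cap\BB(0,R))$ every admissible trajectory stays in a ball whose radius is controlled by the Gronwall estimate \eqref{eq:traj_gronwall}, so the linear growth bounds in \Hyp{1b}(iii) and \Hyp{2} bound the running and terminal costs, hence $\cV$. For uniform continuity the central tool is the NFT property of Lemma \ref{remin}. Given $(t,x),(t',x')\in[0,T]\times\cK$, I would take an $\eps$-optimal feasible pair for one of them, transport it to a (possibly infeasible) pair starting from the other point by the continuous-dependence/Gronwall argument, and then invoke Lemma \ref{remin} to replace it by a genuinely feasible pair whose $W^{1,1}$-distance is controlled by $g^+(\cdot)$. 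Since the transported trajectory violates the constraint only by $O(|x-x'|+|t-t'|)$, and since the Lipschitz bounds \Hyp{1a}(ii), \Hyp{1b}(ii) turn trajectory proximity into a bound on the cost difference, this yields a modulus of continuity independent of the base point, i.e. uniform continuity.

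For the viscosity inequalities, both the supersolution inequality on $[0,T)\times\cK$ and the subsolution inequality on $[0,T)\times\inte{\cK}$ follow from the constrained dynamic programming principle stated just above, by the classical test-function argument. For the supersolution part I would fix $\phi\in C^1$ touching $\cV$ from below at $(t_0,x_0)\in[0,T)\times\cK$, use the DPP with a well-chosen near-optimal control held constant over $[t_0,t_0+h]$, divide by $h$ and let $h\to0^+$ to recover $-\phi_t+H\ge0$. For the subsolution inequality one needs $x_0\in\inte{\cK}$ precisely so that, for $h$ small, every constant control keeps the trajectory feasible; testing the DPP along an arbitrary such control and passing to the limit yields $-\phi_t+H\le0$. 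The terminal condition $\cV(T,\cdot)=\varphi$ on $\cK$ is immediate from the definition, and $\cV\equiv+\infty$ off $\cK$ is a convention.

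The hard part is uniqueness, i.e. the comparison principle for \emph{constrained} viscosity solutions, where the subsolution inequality is imposed only on the interior $\inte{\cK}$. The standard doubling-of-variables argument produces a maximum point of $V_1(t,x)-V_2(s,y)-\tfrac{|t-s|^2+|x-y|^2}{\eps^2}$ that may sit on the boundary $\partial\cK$, where one cannot invoke the subsolution inequality for $V_1$. This is exactly where the IPQ condition \textbf{(HK1)} enters: following Soner's technique, I would add to the penalization a boundary-pushing perturbation built from the inward pointing field, using $\min_{u\in U}f(\cdot,y,u)\cdot\nabla g(y)<-\beta$, so that the maximizing pair is displaced into $\inte{\cK}$ by an amount compatible with the penalization scale; the uniform margin $\beta$ controls the extra error terms, and one recovers the usual contradiction as $\eps\to0$. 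I expect this perturbation bookkeeping---reconciling the boundary displacement with the penalization parameter and the Lipschitz/linear-growth constants---to be the only genuinely delicate step; the remaining passages are routine viscosity machinery, for which I would refer to \cite{Soner-1986,BarCap97}.
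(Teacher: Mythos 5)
Your outline follows exactly the route the paper intends: local boundedness from the Gronwall bound \eqref{eq:traj_gronwall} and the growth conditions, uniform continuity from the NFT property of Lemma \ref{remin}, the constrained sub/supersolution inequalities from the state-constrained DPP (with the subsolution test restricted to $\inte{\cK}$), and uniqueness via Soner's boundary-perturbed comparison argument under \textbf{(HK1)} --- which is precisely the content the paper delegates to \cite{Soner-1986}. The proposal is correct in approach and at least as detailed as the paper, which states the theorem without proof.
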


It should be noticed that the HJB equation in the above theorem provides only partial information on the boundary of $\cK$. Moreover, the function $\VV$ takes infinite values outside $\cK$ (i.e.,  $\VV(t,x)=+\infty$ for every $x\not\in\cK$). These two facts make the approximation of $\VV$ on $\cK$ very challenging and require some penalization techniques.

Finally,  let us mention that the IPQ condition { \bf $(HK1)$} can  be weakened a bit  and  generalized to the case with several constraints as in  \cite{Fran-Mazz2013}.  In \cite{CRK}, a ``higher order'' inward pointing condition involving Lie brackets of the dynamics' vector fields is also analyzed.  
\if{Nevertheless, the  condition  remains restrictive for many control problems. 
Besides, the stability results guaranteed by the IPQ condition  ensure the qualification of any optimal trajectory.  Therefore,  when this assumption is satisfied,  any optimal trajectory corresponds to a normal extremal (see previous section).}\fi

\subsubsection{Case of state constraints without controllability assumptions} \label{Aux_CO}

As we mentioned in the previous section, the Lipschitz regularity of the value function requires  an interplay between the dynamics $f$ and the set of constraints $\cK$. When the controllabity condition
{\bf (HK1)} is not satisfied, the value function may be discontinuous and its characterization by a HJB equation becomes very delicate. In this section, we introduce an alternative formulation of state-constrained control problems, in case the controllability assumption is not satisfied.

We set $G(y):=\max(g_1(y),\cdots,g_{m_g}(y))$ and $G_f(y):=\max(g_{f,1}(y),\cdots,g_{f,m_f}(y))$ for every $y\in \R^d$.
We introduce an auxiliary control problem and its associated value function $\WW$ defined by
\begin{eqnarray}\label{eq:wg}
  & &\WW(t,x,z):=
   \inf_{\by=(\bx,\zeta)\in {\cal S}_{[t,T]}(x,z)} \quad \bigg( \varphi(\bx(T)) - \zeta(T) \bigg)\ 
    \bigvee \ \max_{\mt\in(t,T)} G(\bx(\mt)) \bigvee \ G_f(\bx(T))  
\ee
for $x\in\R^d$, $z\in\R$, $t\in[0,T]$,  $a\vee b:=\max(a,b)$,  and where the set of trajectories ${\cal S}_{[t,T]}(x,z)$
is defined in Remark~\ref{rem.existence}.
In this auxiliary control problem, the term $\max_{\mt\in[t,T]}  g\big(\bx(\mt)\big)$ is an exact
penalization of the state constraints. Here, we shall use the problem \eqref{eq:wg} to characterize
the epigraph of the value function $\var$ without requiring any additional controllability assumption.
\begin{thm}\label{th:1}
Assume that \Hyp{0}-\Hyp{4} are satisfied.
Then, for any $t\in[0,T]$ and $(x,z)\in \R^d \times \R$,\\
$(i)$ 
$$
  \VV(t,x)-z\leq 0
  \quad \Equivalent \quad  \WW(t,x,z)\leq 0.
$$
$(ii)$ 
In addition, the function $\VV$ is characterized by $\WW$ through the relation
\be\label{eq:vsharp-wg}
  \VV(t,x) = \min\bigg\{z \in \R,\ \WW(t,x,z)\leq 0\bigg\}.
\ee
\end{thm}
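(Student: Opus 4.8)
The plan is to reduce everything to part $(i)$, from which $(ii)$ will follow at once, and to prove $(i)$ by unwinding the definition of $\WW$ back into the Bolza cost of \eqref{eq.Pb_CO}. First I would integrate the $\zeta$-component of the augmented system: since $\dot\zeta(s)=-\ell(s,\bx(s),\bu(s))$ with $\zeta(t)=z$, one has $\zeta(T)=z-\int_t^T\ell(s,\bx(s),\bu(s))\,ds$, so that for every admissible pair $(\bx,\bu)$,
\[
\varphi(\bx(T))-\zeta(T)=\varphi(\bx(T))+\int_t^T\ell(s,\bx(s),\bu(s))\,ds-z=J(t,x,\bu)-z,
\]
where $J(t,x,\bu):=\varphi(\bx(T))+\int_t^T\ell(s,\bx(s),\bu(s))\,ds$ is the Bolza cost. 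Hence
\[
\WW(t,x,z)=\inf_{\bu\in\cU}\Big[(J(t,x,\bu)-z)\bigvee\max_{\theta\in(t,T)}G(\bx(\theta))\bigvee G_f(\bx(T))\Big].
\]
I would also record the elementary fact that, by continuity of $\bx$ and of $G,G_f$, the condition $\max_{\theta\in(t,T)}G(\bx(\theta))\le 0$ is equivalent to $g(\bx(s))\le 0$ for all $s\in[t,T]$ (the open-interval bound extends to the endpoints by continuity), and $G_f(\bx(T))\le 0$ is equivalent to $g_f(\bx(T))\le 0$; these are exactly the feasibility requirements of \eqref{eq.Pb_CO}.

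For the implication $\VV(t,x)-z\le 0\Rightarrow\WW(t,x,z)\le 0$ in $(i)$, I would argue by $\varepsilon$-optimality and use no compactness. If $\VV(t,x)\le z$ (so in particular $\VV(t,x)<+\infty$), then for every $\varepsilon>0$ there is a feasible pair $(\bx^\varepsilon,\bu^\varepsilon)\in\XX_{[t,T]}(x)$ with $J(t,x,\bu^\varepsilon)\le\VV(t,x)+\varepsilon\le z+\varepsilon$. For this pair the two constraint terms are $\le 0$ while $J-z\le\varepsilon$, so the bracket is $\le\varepsilon$ and therefore $\WW(t,x,z)\le\varepsilon$; letting $\varepsilon\downarrow 0$ gives $\WW(t,x,z)\le 0$. (When $\VV(t,x)=+\infty$ the hypothesis $\VV-z\le 0$ fails for real $z$ and the implication is vacuous.)

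The converse $\WW(t,x,z)\le 0\Rightarrow\VV(t,x)-z\le 0$ is where \Hyp{4} really enters, and this is the step I expect to be the crux. Under \Hyp{0}--\Hyp{1b} together with \Hyp{4}, Remark~\ref{rem.existence} guarantees that $\cS_{[t,T]}(x,z)$ is compact in $W^{1,1}([t,T])$ for the $C^0$-topology. Since $\varphi,G,G_f$ are continuous and $(\bx,\zeta)\mapsto\zeta(T)$ is continuous for uniform convergence, the functional $(\bx,\zeta)\mapsto(\varphi(\bx(T))-\zeta(T))\bigvee\max_{\theta\in(t,T)}G(\bx(\theta))\bigvee G_f(\bx(T))$ is continuous, so the infimum defining $\WW(t,x,z)$ is attained at some $(\bx^\ast,\zeta^\ast)$ with control $\bu^\ast$. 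If $\WW(t,x,z)\le 0$, then the value of this functional at the minimizer is $\le 0$, hence each of the three terms is $\le 0$: by the continuity remark above $(\bx^\ast,\bu^\ast)$ is feasible for \eqref{eq.Pb_CO}, and $J(t,x,\bu^\ast)\le z$. Consequently $\VV(t,x)\le J(t,x,\bu^\ast)\le z$. The only delicate point is the attainment, which is precisely what the convex-epigraph assumption \Hyp{4} buys through the closure theorem cited in Remark~\ref{rem.existence}; without it one could not pass from the near-feasible $\varepsilon$-minimizers to an exactly feasible trajectory, and would instead have to invoke a neighbouring-feasible-trajectory argument in the spirit of Lemma~\ref{remin} (which would require the extra controllability hypothesis).

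Finally, $(ii)$ is immediate from $(i)$. Setting $A:=\{z\in\R:\WW(t,x,z)\le 0\}$, part $(i)$ says $z\in A\iff z\ge\VV(t,x)$, so $A=[\VV(t,x),+\infty)$ when $\VV(t,x)<+\infty$ and $A=\emptyset$ when $\VV(t,x)=+\infty$. In the finite case $\min A=\VV(t,x)$, the minimum being attained because applying $(i)$ with $z=\VV(t,x)$ yields $\WW(t,x,\VV(t,x))\le 0$; in the infinite case $\min\emptyset=+\infty=\VV(t,x)$ with the convention $\inf\emptyset=+\infty$. This establishes \eqref{eq:vsharp-wg}.
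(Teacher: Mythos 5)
Your proposal is correct and follows essentially the same route as the paper's proof: the forward implication via ($\varepsilon$-)minimizing feasible pairs, the converse via the compactness of $\cS_{[t,T]}(x,z)$ from Remark~\ref{rem.existence} (where \Hyp{4} enters) to get attainment of the infimum defining $\WW$ and hence an exactly feasible minimizer with cost at most $z$, and $(ii)$ as an immediate consequence. Your additional remarks (explicit integration of the $\zeta$-component, the open-versus-closed interval for the running constraint, and attainment of the minimum in \eqref{eq:vsharp-wg}) merely make explicit what the paper leaves implicit.
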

{\begin{proof}
$(i)$ Let us assume that $\VV(t,x)\leq z$. So there exists a sequence $(\bx_n,\bu_n)_{n\in\mathbb{N}}$ of admissible pairs in $\XX_{[t,T]}(x)$,
such that 
$$ \lim_{n\rightarrow+\infty} 
  \int_t^T \ell(s,\bx_n(s),\bu_n(s))\,ds + \varphi(\bx_n(T)) - z = \VV(t,x)-z\leq 0.$$
By admissibility, we have for each $n\geq 0$,  $\max_{\mt\in[t,T]} G(\bx_n(\mt)) \leq 0$ and 
$G_f(\bx_n(T))\leq 0$. 
Hence,
\beno
   \WW(t,x,z) \!\!\! & \!\!\! \leq \!\!\!& \!\! \liminf_{n\to \infty}\!\! \Bigg[
  \bigg( \int_t^T \ell(s,\bx_n(s),\bu_n(s))\,ds + \varphi(y_n(T)) - z\bigg) \bigvee \max_{\mt\in[t,T]} G(\bx_n(\mt)) \bigvee G_f(\bx_n(T)) \Bigg]\\
     \!\!
     & \!\! \leq \!\!& \!\!  0.
\eeno
Conversely, let us assume that $\WW(t,x,z) \leq 0$.
We know that ${\cal S}_{[t,T]}(\xi)$ is a compact set in $C^0([t,T])$,
therefore the infimum in $\WW(t,x,z)$ is achieved by some trajectory $\bx\in {\cal S}{[t,T]}((x,z))$
(with an associated control $\bu\in \cU$).
Moreover,
\beno
   0\geq \WW(t,x,z) & = &
  \bigg( \int_t^T \ell(s,\bx(s),\bu(s))\,ds + \varphi(\bx(T)) - z\bigg) \bigvee \max_{\mt\in[t,T]} G(y(\mt))  \bigvee G_f(\bx(T)).
\eeno
On the one hand, $\max_{\mt\in[t,T]} G(\bx(\mt)) \bigvee G_f(\bx(T)) \leq 0$ and $\bx$ satisfies the state constraints,
and on the other hand,
$$  \VV(t,x)-z \ \leq  \ \int_t^T \ell(s,\bx(s),\bu(s))\,ds + \varphi(\bx(T)) - z \ \leq \  0
$$
which is the desired result. 
Finally, statement $(ii)$ is an immediate consequence of $(i)$.
\end{proof}

\begin{rem}
Should the convexity assumption \Hyp{4} not be satisfied, the statements of the above theorem may not hold. Indeed, in general we have
$$ \inf\{z\in \R, \WW(t,x,z) \leq0\} 
\leq \VV(t,x) \leq \inf\{z\in \R, \WW(t,x,z) <0\}.$$
\end{rem}

The auxiliary control problem suggests  a reformulation of the state-constrained optimal control problem in an augmented state space.  In this new formulation, the constraints are integrated into the functional to be minimized.  The value function $\WW$ is Lipschitz continuous and it can be characterized by an HJB equation without any additional controllability assumption. 
\begin{thm}
Assume that \Hyp{0}-\Hyp{3} are satisfied. Then, the auxiliary value function is Lipschitz continuous and it is the unique viscosity solution of the HJB equation
\begin{eqnarray*}
& & \min\Big(-\partial_t\WW(t,x,z)+H(t,x,D_x\WW(t,x,z), D_z\WW(t,x,z)),  \WW(t,x,z)-G(x)\Big)=0,\\
& & \hspace*{10.7cm} \mbox{on } [0,T[\times\R^d\times \R,\\
& & \WW(T,x,z)=\bigg(\varphi(x)-z\bigg)\bigvee G(x) \bigvee G_f(x),\quad \hspace*{4.1cm}\mbox{on } \R^d\times\R,
\end{eqnarray*}
where the Hamiltonian $H$ is defined by 
$$H(t,x,p,q)= \max_{u\in U}\big(-f(t,x,u)\cdot p-\ell(t,x,u)q\big)$$
for every $(t,x,p,q)\in[0,T]\times\R^d\times\R^d\times\R$.
\end{thm}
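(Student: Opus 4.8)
The plan is to recognize $\WW$ as the value function of a minmax (obstacle) control problem posed in the augmented state space $\R^{d}\times\R$, and then to run the standard viscosity program---Lipschitz regularity, dynamic programming, and a comparison principle---for such problems (as sketched around \eqref{eq.HJB.minmax}, see also \cite{Barron}). Writing $\by=(\bx,\zeta)$ for the augmented state governed by \eqref{eq:augmented_state}, with velocity $\tilde f(s,(x,z),u):=(f(s,x,u),-\ell(s,x,u))$, the problem \eqref{eq:wg} is exactly of minmax type, with terminal cost $\Phi(x,z):=(\varphi(x)-z)\bigvee G_f(x)$ and running (obstacle) cost $\Psi(x,z):=G(x)$. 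The first task is to check that the augmented data inherit the standing hypotheses: since neither $f$ nor $\ell$ depends on $z$, assumptions \Hyp{1a}--\Hyp{1b} give that $\tilde f$ is continuous, locally Lipschitz in $(x,z)$, and of linear growth, while \Hyp{2}--\Hyp{3} make $\Phi$ and $\Psi$ locally Lipschitz with linear growth. Note that \Hyp{4} plays no role here, which is why the statement holds under \Hyp{0}--\Hyp{3} alone: the arguments below use near-optimal trajectories rather than the existence of optimal ones.

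For Lipschitz continuity I would argue directly. Given two data $(t,x,z)$, $(t,x',z')$ and any control $\bu$, the Gronwall estimate \eqref{eq:traj_gronwall} together with the local Lipschitz continuity of $\tilde f$ bounds $\|\by(s)-\by'(s)\|$, uniformly in $s\in[t,T]$ and in $\bu$, by a constant multiple of $\|(x,z)-(x',z')\|$. Composing with the Lipschitz maps $\Phi,\Psi$ and using that $a\bigvee b$ and the running-maximum operation are $1$-Lipschitz transfers this bound to the cost functional; passing to the infimum over $\bu$ yields local Lipschitz continuity of $\WW$ in $(x,z)$. Regularity in $t$ then follows from the dynamic programming principle below and the linear growth of $\tilde f$.

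Next I would establish, for all $h\in[0,T-t]$, the obstacle dynamic programming principle
\[
\WW(t,x,z)=\inf_{(\by,\bu)}\Big\{\ \max_{s\in[t,t+h]}G(\bx(s))\ \bigvee\ \WW\big(t+h,\bx(t+h),\zeta(t+h)\big)\Big\},
\]
by the usual splitting of trajectories and the semigroup property of the running maximum. The pointwise inequality $\WW(t,x,z)\geq G(x)$ is immediate from \eqref{eq:wg} by continuity as $\mt\downarrow t$. From the DPP one derives the two viscosity inequalities in the standard way: expanding $s\mapsto\phi(s,\by(s))$ to first order for a smooth test function $\phi$, and using near-optimal controls for one inequality and arbitrary controls for the other, one gets $-\PC_t\phi+H(t,x,D_x\phi,D_z\phi)\ge0$ at any local minimum of $\WW-\phi$, and $-\PC_t\phi+H(t,x,D_x\phi,D_z\phi)\le0$ at any local maximum of $\WW-\phi$ at which $\WW>G$, with $H$ the Hamiltonian associated with $\tilde f$ as in the statement. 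Combined with $\WW\ge G$ this gives precisely $\min(-\PC_t\WW+H,\WW-G)=0$ in the viscosity sense, and the terminal condition follows by continuity from \eqref{eq:wg}.

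The delicate point, and the main obstacle, is uniqueness, i.e.\ a comparison principle for this variational inequality on the \emph{unbounded} space $\R^d\times\R$. I would prove it by the doubling-of-variables technique adapted to the obstacle structure: for a subsolution $V_1$ and a supersolution $V_2$, penalize
\[
V_1(t,x,z)-V_2(s,x',z')-\tfrac{1}{\eps^2}\big(|t-s|^2+\|x-x'\|^2+|z-z'|^2\big),
\]
add a mild growth-controlling term so that the maximizing sequence stays confined (using the linear growth from \Hyp{1a}(iii)--\Hyp{1b}(iii)), and exploit the structural estimate $|H(t,x,p,q)-H(t,x',p,q)|\le\omega\big(\|x-x'\|(1+\|p\|+|q|)\big)$ coming from the local Lipschitz continuity of $f$ and $\ell$. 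The only genuinely new feature relative to the classical comparison argument recalled after Definition~\ref{sec5:DEFvs} is the bookkeeping of the $\min$ with the obstacle: at the doubling maximum one separates the case $V_1\le G$, in which the supersolution's obstacle inequality $V_2\ge G$ closes the comparison immediately, from the case $V_1>G$, in which the differential inequality is active and the usual estimate applies. Uniqueness of the Lipschitz viscosity solution, hence the identification with $\WW$, then follows.
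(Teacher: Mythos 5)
The paper does not actually prove this theorem: it is stated as a known result, with the relevant references (\cite{Alt-Bok-Zid-2013,ref5}) given in the surrounding discussion, so there is no in-paper proof to compare against. Your proposal follows the route taken in that literature and is fully consistent with the paper's own framework: you recast $\WW$ as the value function of a minmax (obstacle) problem for the augmented dynamics \eqref{eq:augmented_state}, with terminal cost $(\varphi(x)-z)\vee G_f(x)$ and obstacle $G(x)$, and then run the standard program (Gronwall-based Lipschitz bounds, obstacle-type dynamic programming principle, viscosity inequalities, doubling-of-variables comparison), exactly as the paper itself sketches for minmax problems around \eqref{eq.HJB.minmax}. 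The logical assembly is correct: $\WW\ge G$ together with the unconditional supersolution inequality and the subsolution inequality on the set where $\WW>G$ is precisely the viscosity formulation of $\min(-\partial_t\WW+H,\WW-G)=0$, and you rightly observe that \Hyp{4} is not needed since near-optimal controls suffice for every step (it is only used elsewhere, in Theorem~\ref{th:1}, to extract an actual minimizer). The one place where your sketch leaves genuine work undone is the comparison principle on the unbounded space $\R^d\times\R$: with only locally Lipschitz, linearly growing data, the structure estimate $|H(t,x,p,q)-H(t,x',p,q)|\le\omega\big(\|x-x'\|(1+\|p\|+|q|)\big)$ holds only on bounded sets, so the ``growth-controlling term'' must be chosen compatibly with the linear growth of $H$ in $(x,p,q)$ (for instance $\delta e^{\lambda(T-t)}(1+\|x\|^2+z^2)^{1/2}$ with $\lambda$ large) so that the penalized maximum is attained and the resulting error terms vanish; one should also make explicit the uniqueness class (continuous functions of linear growth, in which $\WW$ indeed lies). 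These are standard but not automatic refinements; modulo them, your argument is sound and is essentially the proof the cited references give.
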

We point out that any optimal trajectory for the original problem is also a solution of the auxiliary problem when $z=\bar z:=\cV(t,x)$.  Conversely,  any solution of the auxiliary problem with $z=\bar z$ is an optimal solution of the original state-constrained problem.  As a consequence, the  auxiliary problem provides  the value of the original state-constrained control problem and also allows to reconstruct the optimal trajectories (see \cite{Alt-Bok-Zid-2013,ref5}).
\subsection{Relationship between HJB and PMP}\label{sec-PMP-HJB}
To explain  the relationship between HJB and PMP, we first recall a classical result (see for example \cite{FleRis76}), valid under the very restrictive assumption that $\VV \in C^2$.

\begin{thm}
Consider the optimal control problem \eqref{eq.Pb_CO} with $g=0$ and $g_f=0$ (no state constraint), under the assumptions \Hyp{0}--\Hyp{1a}--\Hyp{1b}. Assume moreover that $f$ and $\ell$ are continuously differentiable with respect to the space variable, that $\cV\in C^2([0,T]\times \R^d)$, and that there exists an optimal pair $(\bx^*,\bu^*)$. Then, the vector $\bp(t)$ defined by
\begin{equation}\label{eq:adj_state_C2}
\bp(t) := -D\cV(t,\bx^*(t))
\end{equation}
satisfies the Pontryagin maximum principle.
\end{thm}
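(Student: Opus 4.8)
The plan is to exploit the hypothesis $\cV\in C^2$ to treat the value function as a \emph{classical} solution of the Hamilton--Jacobi--Bellman equation \eqref{sec5:HJB}, and then to extract the three assertions of the maximum principle (maximization condition, adjoint equation, transversality) by differentiating this equation along the optimal trajectory $\bx^*$. The bridge between the two Hamiltonians is elementary: with the normalization $p^0=-1$ and the definition $\bp=-D\cV$, the HJB Hamiltonian \eqref{eq:H} satisfies $H(t,x,D\cV(t,x))=\max_{u\in U}H(t,x,-D\cV(t,x),-1,u)$, where on the right $H$ is the Pontryagin Hamiltonian. Since $\bp(t)=-D\cV(t,\bx^*(t))$ is the composition of the $C^1$ map $D\cV$ with the absolutely continuous curve $\bx^*$, it is itself absolutely continuous, and the Hamiltonian equation $\dot\bx^*=f(t,\bx^*,\bu^*)=\frac{\partial H}{\partial p}(t,\bx^*,\bp,-1,\bu^*)$ holds by construction. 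Throughout, $D\cV$ denotes the (row) gradient of $\cV$ in $x$, $\frac{\partial f}{\partial x}$ its Jacobian, and $\frac{\partial^2\cV}{\partial x^2}$ the Hessian.

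First I would record the infinitesimal optimality relation along $\bx^*$. By the principle of optimality, the optimal pair turns the dynamic programming principle of Proposition~\ref{dpp} into an equality, namely $\cV(t,\bx^*(t))=\int_t^{t+h}\ell(s,\bx^*(s),\bu^*(s))\,ds+\cV(t+h,\bx^*(t+h))$ for small $h\geq0$. Dividing by $h$, letting $h\to0^+$, and using $\cV\in C^1$ together with Lebesgue differentiation at a.e.\ time $t$, I obtain
\begin{equation}\label{eq:hjb-opt}
\cV_t(t,\bx^*(t))+D\cV(t,\bx^*(t))\cdot f(t,\bx^*(t),\bu^*(t))+\ell(t,\bx^*(t),\bu^*(t))=0.
\end{equation}
Comparing \eqref{eq:hjb-opt} with the HJB equation $\cV_t=\sup_{u\in U}\{-D\cV\cdot f-\ell\}$ evaluated at $\bx^*(t)$ shows that the supremum is attained at $u=\bu^*(t)$; rewritten through $\bp=-D\cV$, this is exactly the maximization condition \eqref{contraintePMP} with $p^0=-1$.

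The key step is the adjoint equation, which I would derive by a minimality argument that avoids differentiating any optimal feedback. Fix $t$, freeze the value $w=\bu^*(t)$, and set
\begin{equation}\label{eq:psi-def}
\psi(t,x):=\cV_t(t,x)+D\cV(t,x)\cdot f(t,x,w)+\ell(t,x,w).
\end{equation}
Since the HJB equation reads $\cV_t(t,x)=\sup_{u\in U}\{-D\cV(t,x)\cdot f(t,x,u)-\ell(t,x,u)\}$, comparison with the $u=w$ term gives $\psi(t,\cdot)\geq0$ on $\R^d$, while \eqref{eq:hjb-opt} gives $\psi(t,\bx^*(t))=0$. Hence $x\mapsto\psi(t,x)$ is minimized at $\bx^*(t)$, so its gradient vanishes there; spelled out, this reads $\partial_t(D\cV)+D\cV\,\frac{\partial f}{\partial x}+\frac{\partial^2\cV}{\partial x^2}\,f+\frac{\partial\ell}{\partial x}=0$ at $(t,\bx^*(t),\bu^*(t))$. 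On the other hand, the chain rule gives $\dot\bp=-\partial_t(D\cV)-\frac{\partial^2\cV}{\partial x^2}\,f$. Substituting the first identity into the second, the Hessian terms $\frac{\partial^2\cV}{\partial x^2}\,f$ cancel and, using $D\cV=-\bp$, one obtains $\dot\bp=-\bp\,\frac{\partial f}{\partial x}+\frac{\partial\ell}{\partial x}=-\frac{\partial H}{\partial x}(t,\bx^*,\bp,-1,\bu^*)$, i.e.\ the adjoint equation. Finally, the terminal data $\cV(T,\cdot)=\varphi$ yield $\bp(T)=-D\cV(T,\bx^*(T))=-\nabla\varphi(\bx^*(T))$, which is the transversality condition for the free endpoint ($g_f\equiv0$) with $p^0=-1$.

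The main obstacle is the honest justification of \eqref{eq:hjb-opt} for a merely measurable optimal control $\bu^*$: the equality form of the dynamic programming principle along the optimal trajectory and the passage to the limit $h\to0^+$ must be argued at Lebesgue points of $s\mapsto\ell(s,\bx^*(s),\bu^*(s))$ where $\dot\bx^*$ exists, so the conclusions hold only a.e.\ in $t$ (which is precisely what the maximum principle asserts). The second delicate point is purely computational: one must arrange the differentiation of the HJB equation so that the Hessian $\frac{\partial^2\cV}{\partial x^2}$ disappears, and the device of minimizing $\psi$ in \eqref{eq:psi-def} is exactly what makes this envelope-type differentiation rigorous without any regularity assumption on the optimal feedback $x\mapsto u^*(t,x)$.
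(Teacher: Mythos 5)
Your proposal is correct and follows essentially the same route as the paper's own argument: evaluate the HJB equation along the optimal trajectory to get the pointwise identity $\cV_t+D\cV\cdot f+\ell=0$, observe that (with the control value frozen at $\bu^*(t)$) the left-hand side is a nonnegative function of $x$ vanishing at $\bx^*(t)$, and differentiate this minimality in $x$ so that the Hessian terms cancel against the chain-rule expression for $\dot\bp$, yielding the adjoint equation, with the maximization and transversality conditions read off directly from the HJB equation and the terminal data. Your write-up is if anything slightly more careful than the paper's (explicit Lebesgue-point justification of the a.e.\ identity, and the correct sign in $\frac{d}{dt}D\cV(t,\bx^*(t))=\cV_{xt}+\cV_{xx}f$).
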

\if{\begin{proof}
In this proof, for shortness of notation, we will denote partial differentiations by subscript.
First, we can differentiate \eqref{eq:adj_state_C2} with respect to time obtaining
\begin{eqnarray}\label{eq:p_dot}
\dot p(t)  = &-\frac{d}{d t} (D\cV)(t,\bx^*(t)) 
 =  -\cV_{xt}(t,\bx^*(t)) + \cV_{xx}(t,\bx^*(t)) f(t,\bx^*(t),\bu^*(t)).
\end{eqnarray}
From the Bellman equation, taking into account the existence of an optimal control $\bu^*$ we have, along the corresponding optimal trajectory $\bx^*$, 
\begin{equation}\label{eq:hjb_traj}
\cV_t(t,\bx^*(t)) + f(t,\bx^*(t),\bu^*(t)) \cdot D\cV(t,\bx^*(t)) + \ell(t,\bx^*(t),\bu^*(t)) = 0.
\end{equation}
The left-hand side of \eqref{eq:hjb_traj} is differentiable with respect to $x$, and attains its minimum at $\bx^*(t)$. It can therefore be differentiated giving
\begin{eqnarray*}
&& \cV_{tx}(t,\bx^*(t)) + \cV_{xx}(t,\bx^*(t)) f(t,\bx^*(t),\bu^*(t)) \\
&& \hspace{2cm} + f_x(t,\bx^*(t),\bu^*(t)) \cV_x(t,\bx^*(t)) + \ell_x(t,\bx^*(t),\bu^*(t)) = 0,
\end{eqnarray*}
where $f_x$ denotes the Jacobian matrix of $f$. Now, using \eqref{eq:p_dot} and changing the order of differentiation, we obtain
\begin{equation}
\dot p(t) + f_x(t,\bx^*(t),\bu^*(t)) \cV_x(t,\bx^*(t)) + \ell_x(t,\bx^*(t),\bu^*(t)) = 0,
\end{equation}
which coincides with the adjoint state equation. Finally, from the Bellman equation \eqref{sec5:HJB} one obtains \eqref{contraintePMP}.
\end{proof}}\fi 

While this relatively simple result works under unrealistically strong assumptions, more recent theory \cite{vin-88} justifies the PMP--HJB relationship (in a suitably weakened form) under general assumptions.
In the case of Mayer problems with a locally Lipschitz continuous cost, the sensitivity relations have also been studied  in \cite{cla-vin-87, vin-88}.  In these results, the value function is only required to be Lipschitz continuous in a neighborhood of the optimal trajectory.  The final cost is not differentiable and therefore, the costate function is not necessarily unique. In this context the  sensitivity relations assert that there exists 
 $\bp$ verifying the PMP, the terminal conditions
\begin{subequations}\label{PMP-HJB}
\begin{equation}\label{eq.p0pT}
-\bp(0)\in \partial_x\VV(0,\bx^*(0)), \qquad -\bp(T)\in \partial_x\VV(T,\bx^*(T)), 
\end{equation}
and both a \emph{partial sensitivity relation}
 \begin{equation}
 -\bp(t)\in \partial_x\VV(t,\bx^*(t)), \quad \mbox{ a.e. on $(0,T)$}, \label{eq:Link1-a}	
 \end{equation}
and a \emph{global sensitivity relation}
\begin{equation}
(\cH(t,\bx^*(t),\bp(t)),-\bp(t)) \in \partial \VV(t,\bx^*(t)), \quad \mbox{ for all  $t\in[0,T]$}.\label{eq:Link2}	
\end{equation}
\end{subequations}
These relations extend \eqref{eq:adj_state_C2} 
by using the  generalized gradient of the value function (which is well defined for locally Lipschitz continuous functions).  The set of
conditions \eqref{PMP-HJB} is in essence a strengthened necessary condition, asserting that it is possible
to choose a co-state trajectory to satisfy the sensitivity relations. 

The relation \eqref{eq.p0pT} can be simply derived by noting
that the optimal solution $(\bx^*,\bu^*)$ is also solution of the free initial state problem
$$
\min \Big\{\varphi(\bx(T))+\int_0^T\ell(t,\bx(t),\bu(t))\,dt -\cV(0,\bx(0)) \ \Big| \ \dot{\bx}(s)=f(s,\bx(s),\bu(s)) \mbox{ on } (0,T)
\Big\}.$$
Applying necessary optimality conditions to this problem yields a costate arc $\bp^0$. Relation \eqref{eq.p0pT}  turns out to be nothing else than the
transversality conditions at the endpoints.  
With the same reasoning, we can notice that $(\bx^*,\bu^*)$ is also solution of the free initial state problem on $[t,T]$ for every $0\leq t\leq T$
$$
\min \Big\{\varphi(\bx(T))+\int_0^T\ell(t,\bx(t),\bu(t))\,dt -\cV(t,\bx(t)) \ \Big| \  \dot{\bx}(s)=f(s,\bx(s),\bu(s)) \mbox{ on } (0,T)
\Big\}.$$
Here again, the optimality condition applied to the 
free intial point asserts the existence of an adjoint arc $\bp^t$ (which depends on the initial time $t$). The  left-endpoint transversality condition yields the relation 
$\bp^t (t)\in \partial\cV(t,\bx^*(t))$. When the final cost function is $C^1$-regular, the costate $\bp^0$
restricted to $[t, T ]$ is the unique solution to the costate equation on this interval satisfying
the right transversality condition. It follows that $\bp^0(t)$ coincides with $\bp^t (t)$; the
proof of \eqref{eq:Link1-a} is then completed. This analysis breaks down when  the final cost $\varphi$ is non-smooth. Indeed, in that case,
 co-state trajectory may not be unique.  
 An example is
 given in \cite[Section 4]{cla-vin-87} showing that, in some cases, there are a number
of possible choices of co-state trajectories  associated with the same optimal control problem, but not all of them satisfy the sensitivity relations.

The original proof of the sensitivity relations \eqref{PMP-HJB} is given in \cite{cla-vin-87,vin-88,Vinter2000}.
In the case when the control problem is in presence of state constraints, the sensitivity relations can be expressed in term of relations between the adjoint vector $(\bp,p^0)$  and the value function $\WW$ of the auxiliary control problem, defined in Section~\ref{Aux_CO}, see \cite{BokDesZid,HerZid}.  

\subsection{Numerical methods for HJB}

In order to present the general theory for the approximation of viscosity solutions of HJB equations, we refer to an abstract forward problem
\begin{equation}\label{PG}
\begin{cases}
v_t+H(t,x,Dv)=0 & (t,x)\in (0,T] \times \R^d, \\
v(0,x) = v_0(x) & x\in\R^d,
\end{cases}
\end{equation}
(for some continuous Hamiltonian $H$) and set ourselves in the usual finite difference scheme framework. Time is discretized with a (fixed) time step $\Delta t$, so that $t_k=k\Delta t$; space is discretized with a fixed space step $\Delta x=(\Delta x_1,\cdots, \Delta x_d)$. 
A generic node will be denoted by $x_j=j\Delta x$, for $j\in\mathbb{Z}^d$. We also define  $\Delta=(\Delta x, \Delta t)$. 
More general options can be considered, in particular variable time steps and unstructured space grids, but we will restrict here to the basic ideas. 
In the following, we denote by $V^n_{i}$ the desired  approximation of  $v(t_n,x_i)$, and by  $V^n$ the set of nodal values for the numerical solution $v(t_n,\cdot)$ at time $t_n$. A scheme may be written in compact form as
\begin{equation}\label{def:S}
V^{n+1}=S(\Delta; V^n),
\end{equation}
where $S$ may be defined in terms of its components $S_j$, for $j\in\mathbb{Z}^d$.  

\subsubsection{Monotone schemes}

The first and basic convergence theory aimed at approximating HJ equations of the form \eqref{PG} uses the concept of monotone scheme. Among the various results, we quote here the Barles--Souganidis theory \cite{BarSou91}, which applies to the widest class of schemes and models, including the possibility of treating second-order, degenerate and singular equations. Roughly speaking, this theory states that any monotone, stable and consistent scheme converges to the exact viscosity solution, provided there exists a comparison principle for the limiting equation.
Consider a scheme in the general form \eqref{def:S}. We recall the concepts of consistency, monotonicity and stability. 

\paragraph{Consistency.}
Let $\Delta_m=(\Delta x_m,\Delta t_m)$ be a generic sequence of discretization parameters, $(t_{j_m},x_{j_m})$ be a generic sequence of nodes in the space--time grid such that, for $m\to\infty$,
\begin{equation}\label{hpsuc}
(\Delta x_m,\Delta t_m) \to 0 \quad\mbox{and}\quad (t_{n_m},x_{j_m})\to (t,x).
\end{equation}
The scheme $S$ is said to be {\em consistent} if for any  $\phi \in C^\infty((0,T]\times\R^d)$, we have

%
\begin{eqnarray} \label{Consistence}
\displaystyle \lim_{m\to \infty} \frac{\phi(t_{n_m},x_{j_m})-S_{j_m}(\Delta_m;\phi(t_{n_m-1},\cdot)) }{\Delta t_m} = \phi_t(t,x)+ H(t, x, D\phi(t,x)),
\end{eqnarray}

\paragraph{Monotonicity.}
The scheme $S$ is said to be {\em monotone} if, for any couple of vectors $V$ and $W$ such that $V_j\geq W_j$:
\begin{equation}\label{Mon}
S_{j}(\Delta;V) \geq S_{j}(\Delta;W),
\end{equation}
for any $\Delta x$, $\Delta t$ satisfying suitable compatibility conditions, that  are typically in the form of the so-called {\em Courant--Friedrich--Levy (CFL) conditions}.

It is also possible to give a generalized form of definition of monotonicity, to treat some case of high-order scheme. We refer the reader to the discussion carried out in \cite{Bokaetal15}.
\medskip

Given a numerical solution $V^n$, we define its piecewise constant (in time) interpolation $V^{\Delta}$ as
\begin{equation}\label{eq:vdelta}
V^{\Delta}(t,x)=
\begin{cases}
I[V^n](x) &
{\mbox{if} }\;t\in \left[t_n,t_{n+1}\right),\\
v_0(x) & {\mbox{if} }\;t\in[0,\Delta t),
\end{cases}
\end{equation}
where $I[V^n](x)$ denotes an interpolation of the node values in $V^n$, computed at $x$. We remark that the interpolation operator has to satisfy itself a monotonicity property to obtain a monotone scheme (this holds, for example, for a piecewise linear reconstruction).

We can now state (in a slightly rephrased form) the convergence result given in \cite{BarSou91}:

\begin{thm} \label{Th:convergenceBS}
Assume that \eqref{PG} satisfies a comparison principle, and let $v(t,x)$ be the unique viscosity solution of \eqref{PG}. Assume that \eqref{Consistence} and \eqref{Mon} hold. Assume in addition that the family $v^{\Delta t}$ is uniformly bounded in $L^\infty$. Then, $V^{\Delta}(t,x)\rightarrow v(t,x)$ locally uniformly on $\R^d \times [0,T]$ as $\Delta \rightarrow 0$.
\end{thm}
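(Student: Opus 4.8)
The plan is to use the method of half-relaxed (Barles--Perthame) limits, the standard route to the Barles--Souganidis theorem. Since the family $v^{\Delta t}$ is uniformly bounded in $L^\infty$ (the stability hypothesis), I can define the upper and lower semilimits
\[
\bar v(t,x) := \limsup_{\substack{(s,y)\to(t,x)\\ \Delta\to 0}} V^\Delta(s,y),
\qquad
\underline v(t,x) := \liminf_{\substack{(s,y)\to(t,x)\\ \Delta\to 0}} V^\Delta(s,y),
\]
which are finite and satisfy $\underline v\leq \bar v$ by construction; moreover $\bar v$ is upper semicontinuous and $\underline v$ is lower semicontinuous. The goal is to prove that $\bar v$ is a viscosity subsolution and $\underline v$ a viscosity supersolution of \eqref{PG}. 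The comparison principle then forces $\bar v\leq \underline v$, whence $\bar v=\underline v=v$, and the coincidence of the two semilimits with a continuous function yields locally uniform convergence of $V^\Delta$ to $v$.

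The heart of the argument, and the step I expect to be the main obstacle, is establishing the subsolution property for $\bar v$ (the supersolution property for $\underline v$ being symmetric). Let $\phi\in C^\infty$ and let $(t_0,x_0)$ be a strict local maximum of $\bar v-\phi$. A standard localization argument shows there exist discretization parameters $\Delta_m\to 0$ and grid nodes $(t_{n_m},x_{j_m})\to (t_0,x_0)$ at which $V^{\Delta_m}-\phi$ attains a local maximum over the grid and such that $V^{\Delta_m}(t_{n_m},x_{j_m})\to \bar v(t_0,x_0)$. Writing $\xi_m:=V^{\Delta_m}(t_{n_m},x_{j_m})-\phi(t_{n_m},x_{j_m})$, maximality gives $V^{\Delta_m}(t_{n_m-1},\cdot)\leq \phi(t_{n_m-1},\cdot)+\xi_m$ on the stencil. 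Invoking the monotonicity \eqref{Mon} of $S$ together with the translation property $S_j(\Delta;W+c)=S_j(\Delta;W)+c$ (built into the scheme framework) and the recursion $V^{\Delta_m}(t_{n_m},x_{j_m})=S_{j_m}(\Delta_m;V^{\Delta_m}(t_{n_m-1},\cdot))$, the constants $\xi_m$ cancel and I obtain
\[
\phi(t_{n_m},x_{j_m})-S_{j_m}\bigl(\Delta_m;\phi(t_{n_m-1},\cdot)\bigr)\leq 0.
\]
Dividing by $\Delta t_m>0$ and passing to the limit via the consistency relation \eqref{Consistence} yields exactly $\phi_t(t_0,x_0)+H(t_0,x_0,D\phi(t_0,x_0))\leq 0$, the desired subsolution inequality.

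For the initial data I would use that $V^\Delta(\cdot,x)=v_0(x)$ on $[0,\Delta t)$ together with the continuity of $v_0$ to conclude $\bar v(0,\cdot)=\underline v(0,\cdot)=v_0$; this is what licenses the application of the comparison principle on the time slab. With $\bar v$ and $\underline v$ now sub- and supersolutions sharing the same initial trace, comparison gives $\bar v\leq \underline v$ everywhere, and combined with $\underline v\leq\bar v$ we get $\bar v=\underline v=:v$, the continuous viscosity solution. The final, purely topological step deduces locally uniform convergence from the equality of the semilimits: were $V^\Delta$ to fail to converge uniformly to $v$ on some compact set, one could extract a sequence realizing a limiting value strictly above $v$ (contradicting $\bar v=v$) or strictly below (contradicting $\underline v=v$). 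The technical care throughout lies in the extremum-localization argument and in the correct use of the monotonicity and translation structure; the consistency and comparison hypotheses then do the rest.
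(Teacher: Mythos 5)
The paper does not actually prove this theorem: it states it as a rephrasing of the Barles--Souganidis result and refers to \cite{BarSou91} for the proof. Your proposal reconstructs precisely the argument of that reference --- the method of half-relaxed (Barles--Perthame) limits: define $\bar v$ and $\underline v$, show they are respectively sub- and supersolutions using monotonicity plus consistency, invoke comparison to force $\bar v=\underline v$, and deduce locally uniform convergence from the equality of the semicontinuous envelopes. The outline and the key mechanism (maximality on the grid, cancellation of the constant $\xi_m$, division by $\Delta t_m$, passage to the limit via \eqref{Consistence}) are correct.

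Two points deserve flagging. First, the cancellation of $\xi_m$ relies on the translation property $S_j(\Delta;W+c)=S_j(\Delta;W)+c$, which you correctly identify but which is \emph{not} among the stated hypotheses of the theorem (the paper lists only \eqref{Consistence} and \eqref{Mon}); in the original formulation of \cite{BarSou91} this is absorbed into a consistency condition stated for $\phi+\xi$ with $\xi\to 0$, whereas \eqref{Consistence} here is stated for $\phi$ alone. Your proof is therefore complete only under this additional (standard, and satisfied by the schemes \eqref{eq:schemeUP} and \eqref{sl_1}) structural assumption, and it is worth making that explicit. Second, the identification $\bar v(0,\cdot)=\underline v(0,\cdot)=v_0$ is quicker than it should be: the relaxed limits at $t=0$ involve values $V^\Delta(s,y)$ with $s\geq \Delta t$, which are not the initial datum, so one needs either a barrier/consistency argument near $t=0$ or a comparison principle formulated with relaxed initial conditions. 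Neither issue is a wrong turn --- both are handled in the cited reference --- but as written they are the two genuine gaps in an otherwise faithful rendition of the standard proof.
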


This result directly applies to the most classical cases of monotone schemes, as in the examples below.

\paragraph{Finite difference schemes.}
Given a numerical Hamiltonian $\cH: [0,T]\times \R^d\times\R^d\times\R^d\converge\R $, we define an explicit scheme (see \cite{CL84}) as follows:
\begin{subequations}\label{eq:schemeUP}
\begin{eqnarray}
&&  V^{n+1}_{i} 
    = V^{n}_{i} - \Delta t\,\cH(t_n,x_i,\,D^- V^{n}_{i},\,D^+ V^{n}_{i}),\label{eq:scheme.a} \\
&& V_{i}^{0} = v_0(x_i). \label{eq:scheme.b}
\end{eqnarray}
\end{subequations}
Here, $i\in \Z^d$, and the terms $D^- V^{n}_{i}$ and $D^+ V^{n}_{i}$ represent respectively left and right finite difference approximations of the gradient at $x_i$, defined as
$D^{\pm} V^n_i=(D^\pm_k V^n_i)_{1\leq k\leq d}$
with
\beno
   D^\pm_{k} V^n_i:= \pm \frac{V^n_{i\pm e_k}-V^n_{i}}{\Delta x_k},
\eeno
and where $\{e_k\}_{k=1,\dots,d}$ is the canonical basis of $\R^d$ 
($(e_k)_k=1$ and $(e_k)_j=0$ if $j\neq k$).


For schemes of this form, and assuming that the numerical Hamiltonian $\cH$ 
is Lipschitz continuous with respect to all its arguments, consistency with $H$ comes down to the condition
$$
\cH(t,x,p,p)= H(t,x,p)
$$
and monotonicity is checked in the form
$$
\frac{\partial\cH}{ \partial p^-_k} (t,x,p^-,p^+) \geq 0, \quad
\frac{\partial\cH}{ \partial p^+_k} (t,x,p^-,p^+) \leq 0.
$$ 
These latter conditions typically require a CFL-type compatibility condition between $\Delta t$ and $\Delta x_i$.

Two classical choices for the numerical Hamiltonian are in Upwind and Lax--Friedrichs form:

\begin{itemize}

\item 
 If the Hamiltonian $H$ is defined by \eqref{eq:H}, then an upwind numerical Hamiltonian may be constructed in the form
\be \label{eq.HUP}
  \cH^{Up}(t,x,p^-,p^+)=\max_{u\in U} \left[\sum_{i=1}^d \left(\max(-f_i(t,x,u),0)\,p^-_i +\min(-f_i(t,x,u),0)\,p^+_i\right)  -\ell(t,x,u)\right].
\ee
This form fulfils consistency and monotonicity conditions for $\Delta t$ satisfying the CFL condition
\begin{equation}\label{CFL_Upwind}
\Delta t \left(\sum_{1\leq i \leq d} \frac{\max_{t,x,u}|f_i(t,x,u)|}{\Delta x_i}  \right)\leq 1. 
\end{equation}

\item
The Lax-Friedrichs scheme can be defined for a generic Hamiltonian $H$ as
\be \label{eq.HLF}
   \cH^{LF}\left(t,x,p^-,p^+\right):= H\left(t,x, \frac{p^-+p^+}2 \right) - \sum_{i=1}^d
   C_i\ \frac{p_i^+ - p_i^-}{2},
\ee
The numerical Hamiltonian $\cH^{\text{LF}}$ satisfies the monotonicity condition provided the 
constants $C_i$ are chosen such that
$C_i\geq \max_{u\in U}|f_i(t,x,u)| $
and $(\Delta t,\Delta x)$ satisfies \eqref{CFL_Upwind}.

\end{itemize}




\paragraph{Semi-Lagrangian schemes.}

The Semi-Lagrangian (SL) scheme is written here directly in the form suitable for the backward dynamic programming equation \eqref{sec5:HJB}--\eqref{eq:H}. In fact, the SL scheme can be derived by discretizing the Dynamic Programming Principle on a single time step:
\begin{equation}\label{sl_1}
\begin{cases}
\displaystyle v_j^{n-1}=\min_{\alpha\in U}\left\{\Delta t\ell(t,x,\alpha)+I_1[V^n](x_j+\Delta tf(t,x,\alpha)\right\}\\
v_j^N = \varphi(x_j),
\end{cases}
\end{equation}
in which $I_1[V](x)$ denotes the $\mathbb{P}_1$ (piecewise linear or multilinear) interpolate of the vector $V$ of node values, computed at the point $x$. The SL scheme is consistent, and the choice of a linear interpolation as $I_1$ implies also monotonicity of the scheme.

\subsubsection{High-order schemes}
While the framework of monotone schemes remains the most classical, in the last decades high-order numerical schemes for HJB equations have been developed and analyzed. Their convergence analysis relies typically on two theoretical tools:

\begin{itemize}

\item {\bf $\bm\varepsilon$-monotonicity}

The Barles--Souganidis theorem allows for an $o(\Delta t)$ monotonicity defect, making it possible to prove convergence for quasi-monotone schemes. This theory has been applied to high-order SL schemes and to filtered schemes, as in \cite{augoula00, FalFer13, Bokaetal15}.

\item {\bf Lin--Tadmor theory}

Lin--Tadmor convergence theory is inspired by the $Lip$'-stability theory for conservation laws. Here, a different concept of stability is singled out, i.e., {\em uniform semi-concavity} of numerical solutions, along with a suitable definition of consistency.
The convergence result, together with a practical application of this theory is presented in \cite{lin01}.

\end{itemize}

\paragraph{Higher-order FD schemes.}
The basic strategy for constructing high-order finite difference methods has been first proposed in \cite{osh-shu-91} and uses  a TVD Runge--Kutta method for the time discretization combined with high order approximations of the right/left derivatives $D_j^\pm[V]$ at the node $x_j$ (for example the  ENO approximation, see~\cite{osh-shu-91}).

\paragraph{Higher-order SL schemes.}

The SL scheme \eqref{sl_1} is easily extended to a higher consistency rate by replacing the $\mathbb{P}_1$ space interpolation $I_1$ with an interpolation of higher accuracy \cite{falcone02, carlini05}. In general, since characteristics are not straight lines, a more accurate method of characteristics tracking is also desirable \cite{Fal6}.
In some model cases convergence of high-order SL schemes, for both the evolutive and the stationary case, can be proved by showing their quasi-monotonicity (see \cite{ferretti02, FalFer13, Bokaetal15}).

\paragraph{Filtered schemes.}
 
The general idea of filtered schemes (which had previously appeared in the context of conservation laws as {\em flux-limiter schemes}) is to provide a clever coupling between a monotone and a high-order scheme.
Starting from a monotone scheme $S^M$, a high-order scheme $S^{HO}$ and  a bounded {\em filter function} $F:\R\rightarrow \R$, the filtered scheme $S^F$ is defined as
\begin{equation}\label{eq:FS}
  v^{n+1}_j = S^F_j(V^n) := S^{M}_j(V^n)+\epsilon\Delta t F\left(\frac{S^{HO}_j(V^n)-S^{M}_j(V^n)}{\epsilon\Delta t}\right),
\end{equation}
where $\eps=\eps(\Delta)>0$ is a parameter vanishing for $\Delta t,\Delta x\to 0$, which controls the monotonicity defect of the filtered scheme (more hints on the choice of $\eps$ and of the filter function can be found in \cite{froese13, boka16}).
In constructing the filter function, the basic idea is that ``large'' values of the ratio $\rho=(S^{HO}-S^M)/(\eps\Delta t)$ indicate a singularity (where the scheme needs to be monotone), while ``small'' values indicate a smooth region (in which the scheme can be high-order). It can be shown that, for a suitable choice of $\epsilon$, the filtered scheme converges to the viscosity solution by quasi-monotonicity.



\paragraph{Further comments.}
Several advances have been made  to improve the numerical schemes of approximations of HJB equations, in particular in high dimension. 
Let us mention the resolution techniques on sparse grids \cite{Bok-Gar-Gri-Klo,Gar-Kro}, on tree structures as in \cite{Alla-Fal}, or approximation by sophisticated model reduction techniques as in \cite{Alla-Fal-POD}.
We also mention another chapter \cite{Rozza} of the Volume 1 of this Handbook, dedicated to model reduction methods. 
All these methods aim at providing accurate numerical  approaches for solving efficiently  HJB equations with reasonable numerical efforts (measured by complexity of algorithms, CPU time and accuracy).


\subsubsection{Optimal trajectory reconstruction  from the value function}

From a control viewpoint, the approximation of the value function $\cV$ has a relatively lesser interest with respect to the construction of the (approximate) optimal control.  
 We propose  in this section some algorithms that lead, given an approximations of the value function, to construct a quasi-optimal controls in feedback form. The procedure does not depend on the specific scheme used to compute $\cV$. 
For simplicity, we consider the trajectory reconstruction on the time interval $[0,T]$, although all the results remain valid for a reconstruction on any sub-interval $[t,T]$.
For $n_h\in \N$ and $h=T/n_h$, consider a partition $s_0=0<s_1<\cdots<s_{n_h}=T$ of $[0,T]$, with $s_k=kh$.
Consider a numerical approximation $f_h$  of the dynamics $f$ such that, for every $R>0$, we have
\be\label{eq:assump_fh}
  |f_h(t,x,u) - f(t,x,u)|\leq C_R h, \qquad \forall t\in [0,T],\ |x|\leq R, u\in U,
\ee
where the constant $C_R$ is independent of $h\in(0,1]$.
An approximation scheme for the differential equation $\dot \bx(t) = f(t,\bx(t),\bu(t))$ (for a constant control $u$, discrete times $s_k$ and time step $h$) can be written as
\be\label{eq:scheme_equadiff} 
  y_{k+1}=y_k + h f_{h}(s_k,y_k,u), \quad k \geq 0.
\ee
Here and in the sequel we use the notation $y_k$ to denote a state at discrete times.
The case of the Euler forward scheme corresponds to the choice $f_h:=f.$
Higher order Runge-Kutta schemes can also be written as \eqref{eq:scheme_equadiff}
and with a function $f_h$ satisfying \eqref{eq:assump_fh}. 
For instance, the Heun scheme (with constant control) corresponds to the choice
$$
  f_h(t,y,u):= \frac{1}{2} (f(t,y,u) + f(t+h,y+ h f(t,y,u),u)).
$$

\paragraph{Bolza problems.}
Consider first, the case of Bolza unconstrained problems. Let $\cV$ the exact value function defined in \eqref{eq.Pb_CO}.  Let $\cV^h$ be an approximation of $\cV$, 
and define $E_h$ as a uniform bound on the error:
$$
  |\cV^h(t,x) - \cV(t,x) |\leq E_h, \quad \forall t\in[0,T], |x|\leq R,
$$ 
with $R>0$ large enough.
The approximate feedback is defined on the basis of the approximate value function with a discrete dynamic programming procedure.
\begin{algorithm*}[!hbtp]
\caption{{\bf (TR) - Trajectory reconstruction algorithm for Bolza problems}
}
\begin{algorithmic}[1]
\REQUIRE 
First we set $y^h_0:=x$. 
\STATE For $k=0,\dots,{n_h}-1$, knowing the state $y^h_k$ 
we define
\begin{itemize}
\item[$(i)$]
 an optimal control value $u^h_k\in U$ such that 
 \begin{eqnarray}\label{cont_feed}
   u^h_k \in \argmin_{u\in U}
   \Big[\cV^h\big(s_k,y^h_k +h_k\,f_{h}(s_k,y^h_k,u)\big)+h \ell(s_k,y^h_k,u) \Big]
 \end{eqnarray}
\item[$(ii)$]
  a new state position $y^h_{k+1}$ 
  \begin{eqnarray}\label{eq:def-ypu}
    y^h_{k+1} :=y^h_k + h_k f_{h}(s_k,y^h_k,u^h_k).
  \end{eqnarray} 
\end{itemize}
\RETURN 
a piecewise constant control $\bu^h(s):= u^h_k$  on $[s_k,s_{k+1}[$,
and a piecewise linear trajectory $\by^h$ such that 
$\by^h(s_k)=y^h_k$. 
\end{algorithmic}
\end{algorithm*}

Following some arguments introduced in \cite{row-vin-91},  it can be shown that  any cluster point of $({\by}^h)_{h>0}$ 
is an optimal trajectory that realizes a minimum in the definition of the original control problem. 

\begin{thm}\label{th:traj-algo1} Assume \Hyp{0}, \Hyp{1}, \Hyp{2} and \Hyp{4}.
Assume also that the approximation \eqref{eq:assump_fh} is valid
and the error estimate 
$E_h=o(h)$. 
Let $(y,z)$ be in $\R^d\times\R$ and 
let $(y^h_k)$ be the sequence generated by Algorithm (TR). 

$(i)$ The approximate trajectories $(y_k^h)_{k=0,\dots,n_h}$ constitute a minimizing sequence in the following sense:
\be\label{eq:res0-1}
  \cV(0,x)
    & = & \lim_{h\converge 0} \bigg(  \varphi(y_{n_h}^h)+h\sum_{k=0}^{n_h}\ell(s_k,y_k^h,u^h_k) \bigg).
\ee

$(ii)$ Moreover, the family $({\by^h})_{h>0}$ admits cluster points, for the $L^\infty$ norm, when $h\converge 0$.
Any such cluster point $\bf\bar y$ is an admissible trajectory
and ${\bf\by}$ is an optimal trajectory for $\cV(0,x)$.
\end{thm}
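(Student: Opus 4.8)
The plan is to combine a compactness argument, which handles the existence of cluster points and the admissibility of the limit in $(ii)$, with a telescoping estimate along the approximate value function, which delivers the cost convergence in $(i)$. First I would establish a priori bounds: since $f_h$ inherits the linear growth of $f$ up to an $O(h)$ term, a discrete Gronwall inequality shows that the nodes $y^h_k$ stay in a fixed ball $\BB(0,R)$ for all $h\le 1$ and that the piecewise-linear interpolants $\by^h$ are uniformly Lipschitz. By Arzel\`a--Ascoli, $(\by^h)_{h>0}$ is then precompact in $C^0([0,T])$, which already yields the first assertion of $(ii)$.

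\textbf{Upper bound via telescoping.} For the cost, I would track $b_k:=\cV^h(s_k,y^h_k)$. Because $y^h_{k+1}$ realizes the minimum defining $u^h_k$, one has exactly $b_{k+1}+h\ell(s_k,y^h_k,u^h_k)=\min_{u\in U}\big[\cV^h(s_{k+1},y^h_k+h f_h(s_k,y^h_k,u))+h\ell(s_k,y^h_k,u)\big]$, so summing over $k$ telescopes the $b_k$. Replacing $\cV^h$ by $\cV$ at the two endpoints costs $2E_h$ per step and, since $\cV$ is locally Lipschitz (by the Proposition on Lipschitz continuity of $\VV$) and $|f_h-f|\le C_R h$, replacing $f_h$ by $f$ costs only $O(h^2)$ per step. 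It then remains to bound the \emph{exact} one-step Bellman residual $\min_{u\in U}\big[\cV(s_{k+1},y+h f(s_k,y,u))+h\ell(s_k,y,u)\big]-\cV(s_k,y)$ from above by $h\,\omega(h)$ with $\omega(h)\to 0$ uniformly on $\BB(0,R)$. Granting this, summation over the $n_h=T/h$ steps together with $\cV(T,\cdot)=\varphi$ and $|\cV^h(T,\cdot)-\varphi|\le E_h$ gives $\varphi(y^h_{n_h})+h\sum_k\ell(s_k,y^h_k,u^h_k)\le \cV(0,x)+2n_h E_h+T\,\omega(h)+O(h)$; here the hypothesis $E_h=o(h)$ is precisely what forces $2n_h E_h=2T\,(E_h/h)\to 0$.

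\textbf{One-step consistency (main obstacle, via \Hyp{4}).} The hard part is the upper estimate on the one-step Bellman residual, and this is exactly where \Hyp{4} is indispensable. By the dynamic programming principle (Proposition~\ref{dpp}) there is, for each $\eps>0$, an admissible pair on $[s_k,s_{k+1}]$ starting at $y$ whose running cost plus $\cV(s_{k+1},\cdot)$ at its endpoint equals $\cV(s_k,y)+\eps$; its endpoint is $y+\int_{s_k}^{s_{k+1}}f\,ds$ and its running cost is $\int_{s_k}^{s_{k+1}}\ell\,ds$. I would show that the averaged pair $\big(\tfrac1h\int f,\,-\tfrac1h\int\ell\big)$ lies, up to $o(1)$, in the convex set of \Hyp{4} at $(s_k,y)$, so that a single constant control $u^\ast\in U$ produces $f(s_k,y,u^\ast)$ within $o(1)$ of the averaged drift and $\ell(s_k,y,u^\ast)\le \tfrac1h\int\ell+o(1)$; feeding $u^\ast$ into the discrete minimum and invoking the Lipschitz continuity of $\cV$ then yields the required bound with $\omega(h)\to0$. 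Uniformity of $\omega$ on $\BB(0,R)$ follows from the equicontinuity of $f$ and $\ell$ on the compact set $[0,T]\times\BB(0,R)\times U$. This convexity-based passage from the genuinely optimal (non-constant) control to a single admissible constant control is the crux of the argument.

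\textbf{Lower bound, admissibility, conclusion.} Finally, fix a cluster point $\bar y=\lim_j \by^{h_j}$ in $C^0$. Augmenting the scheme with $z^h_{k+1}=z^h_k-h\ell(s_k,y^h_k,u^h_k)$, the pairs $(\by^{h_j},\bz^{h_j})$ are discretizations of the augmented system \eqref{eq:augmented_state}; by Remark~\ref{rem.existence}, under \Hyp{4} the set $\cS_{[0,T]}(x,0)$ is compact for the $C^0$-topology, so the limit $(\bar y,\bar z)$ belongs to it, i.e.\ $\bar y$ is admissible for some $\bar u\in\cU$ with $\bar z(T)=-\int_0^T\ell(s,\bar y,\bar u)\,ds$. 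Hence the discrete cost converges, $\varphi(y^{h_j}_{n_{h_j}})+h_j\sum_k\ell=\varphi(\by^{h_j}(T))-\bz^{h_j}(T)\to \varphi(\bar y(T))+\int_0^T\ell(s,\bar y,\bar u)\,ds$, which by definition of $\cV$ is $\ge \cV(0,x)$. Combined with the upper bound of the second paragraph, the two inequalities pin the common limit to $\cV(0,x)$; since this holds for every convergent subsequence, the full limit exists, proving $(i)$, and the limiting cost equals $\cV(0,x)$, so $\bar y$ is an admissible optimal trajectory, which completes $(ii)$.
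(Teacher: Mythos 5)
Your proposal is essentially correct and follows the same route that the paper itself only gestures at: the paper gives no proof of Theorem~\ref{th:traj-algo1}, deferring to the arguments of \cite{row-vin-91}, and your reconstruction (a priori bounds plus Arzel\`a--Ascoli; a telescoping estimate on $b_k=\cV^h(s_k,y^h_k)$ in which $E_h=o(h)$ is exactly what kills the accumulated $2n_hE_h$ error; a one-step consistency bound obtained by averaging an $\eps$-optimal control over $[s_k,s_{k+1}]$ and using the convexity of the augmented velocity set in \Hyp{4} to realize the average by a single constant control; and a lower bound via compactness of the augmented trajectory set) is precisely that standard scheme. You also correctly identify the two places where the hypotheses bite: \Hyp{4} for the one-step Bellman residual and for the closure of the limit set, and $E_h=o(h)$ for the error bookkeeping.

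One point deserves a more careful formulation. In the last step you argue that since $\cS_{[0,T]}(x,0)$ is compact in $C^0$ (Remark~\ref{rem.existence}), the limit $(\bar y,\bar z)$ of the discrete pairs belongs to it. As stated this does not follow: the interpolants $(\by^{h},\bz^{h})$ are not elements of $\cS_{[0,T]}(x,0)$, only approximate trajectories whose derivatives lie a.e.\ in the augmented (convexified) velocity set inflated by a ball of radius $\delta_h\to 0$ (coming from $|f_h-f|\le C_Rh$, the freezing of $(s_k,y^h_k)$ over each step, and the piecewise-linear interpolation). What you need is the \emph{stability} of the trajectory set of the convexified differential inclusion under such vanishing perturbations --- which is exactly what the compactness-of-trajectories theorem cited in Remark~\ref{rem.existence} provides, together with a Filippov measurable selection to recover a control $\bar u\in\cU$; and since the limit satisfies $\dot{\bar z}=-\ell(s,\bar y,\bar u)+\eta$ with $\eta\le 0$, one gets $-\bar z(T)\ge\int_0^T\ell$, which is the correct inequality for the lower bound. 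With that (standard) precision added, your argument is complete. A cosmetic remark: the minimization in \eqref{cont_feed} is written with $\cV^h(s_k,\cdot)$ but must be read as $\cV^h(s_{k+1},\cdot)$, as you implicitly and correctly do in your telescoping identity.
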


Let us emphasize that the condition $E_h=o(h)$ indicates that the approximation $\cV^h$ should be provided with a given precision. Typically, a
numerical  scheme would provide an approximation $V^\Delta$ for $\Delta=(\Delta t,\Delta x)$. Under a CFL condition, the error estimate is of order 
$O(\sqrt{\Delta t})$. To ensure that $\|V^\Delta-\cV\|=o(h)$, it suffices to take $\sqrt{\Delta t}=o(h)$.

\paragraph{Minmax problems.}
The same idea as in Algorithm
(TR) can be adapted for minmax control problems (see section~\ref{Other_OCP}). 
Let  $\cV^{\#,h}$ be an approximation of the 
value function $\cV^\#$. Here, the  function $\cV^{\#,h}$
could be again a numerical approximation obtained by solving a discretized form of the HJB equation \eqref{eq.HJB.minmax}.

\medskip

\begin{algorithm*}[!hbtp]
\caption{{\bf (TRM) - Trajectory reconstruction for Minmax problems}
}
\begin{algorithmic}[1]
\REQUIRE Set $y^h_0=x$.
\STATE 
Define the positions  $(y^h_k)_{k=0,\dots,{n_h}}$, and control values $(u^h_k)_{k=0,\dots,{n_h}-1}$, by recursion as follows.
or $k=0,\dots,{n_h}-1$, knowing the state $y^h_k$ 
we define
\begin{itemize}
\item[$(i)$]
 an optimal control value $u^h_k\in U$ such that 
 \begin{eqnarray}\label{eq:feedcontr1}
   u^h_k \in \argmin_{u\in U}
   \cV^{\#,h}\big(s_k,y^h_k +h_k\,f_{h}(s_k,y^h_k,u), z \big) \bigvee  \Psi(s_k,y^h_k)
 \end{eqnarray}
\item[$(ii)$]
  a new state position $y^h_{k+1}$ 
  \begin{eqnarray*}\label{eq:def-ypu2}
    y^h_{k+1} :=y^h_k + h_k f_{h}(s_k,y^h_k,u^h_k).
  \end{eqnarray*} 
\end{itemize}
\RETURN 
a piecewise constant control $\bu^h(s):= u^h_k$  on $[s_k,s_{k+1}[$,
and a piecewise linear trajectory $\by^h$ such that 
$\by^h(s_k)=y^h_k$. 
\end{algorithmic}
\end{algorithm*}
Note that in~\eqref{eq:feedcontr1} the value of $u^h_k$ can also be defined as a minimizer 
of $ u \converge \cV^{\#,h} \big(s_k,y^h_k +h_k\,f_h(s_k,y^h_k,u), z \big)$,
since this will imply in turn to be a minimizer of~\eqref{eq:feedcontr1}

\begin{thm}\label{th:traj-1} Assume \Hyp{0}, \Hyp{1}, \Hyp{2} and \Hyp{4} hold true.
Assume also that the approximation \eqref{eq:assump_fh} is valid
and that $\cV^{\#,h}$ is an approximation of $\cV^\#$ with  error estimate
$\|\cV^\#-\cV^{\#,h}\|=o(h)$. 
let $(y^h_k)$ be the sequence generated by Algorithm~2. 

$(i)$ The approximate trajectories $(y_k^h)_{k=0,\dots,n_h}$ constitute a minimizing sequence in the following sense:
\beno
  \cV^\#(0,y)
    & = & \lim_{h\converge 0} \bigg( \max_{0\leq k\leq {n_h}} \Psi(s_k,y_k^h)\bigg) \bigvee \varphi(y_{n_h}^h).
\eeno

$(ii)$ Moreover, the family $({\by^h})_{h>0}$ admits cluster points, for the $L^\infty$ norm, when $h\converge 0$.
For any such cluster point $\bf\bar y$,  we have ${\bf\bar y}\in \cS_{[0,T]}(y)$ 
and ${\bf\bar y}$ is an optimal trajectory for $\cV^{\#}(0,x)$.
\end{thm}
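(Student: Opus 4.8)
The plan is to follow the same two–sided estimate used for the Bolza analog (Theorem~\ref{th:traj-algo1}), adapting the additive running cost to the max structure of the minmax problem. Throughout, I would fix $R>0$ so that, by the Gronwall estimate \eqref{eq:traj_gronwall}, every trajectory issued from $x$ stays in $\mathbb{B}(0;R)$ uniformly in $h$; this makes all local Lipschitz constants, the bound $C_R$ in \eqref{eq:assump_fh}, and the moduli of continuity of $\Psi$ and $\varphi$ uniform. I would also replace the piecewise-linear reconstruction $\by^h$ (built with $f_h$) by the genuine trajectory $\bx^h$ solving $\dot\bx^h=f(s,\bx^h,\bu^h)$ with the piecewise constant control $\bu^h$ returned by the algorithm; since $|f_h-f|\le C_R h$, a one-step-error/Gronwall argument gives $\|\bx^h-\by^h\|_\infty\to 0$, so the discrete cost $\max_k\Psi(s_k,y^h_k)\vee\varphi(y^h_{n_h})$ and the continuous cost $\max_{s}\Psi(s,\bx^h(s))\vee\varphi(\bx^h(T))$ share the same limit.

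For the lower bound in $(i)$, I would simply note that $(\bx^h,\bu^h)$ is admissible, hence its continuous cost is $\ge\cV^\#(0,x)$ by definition; passing to the discrete cost costs only a modulus term, so the discrete cost is $\ge\cV^\#(0,x)-o(1)$. The upper bound is the heart of the matter. Writing the dynamic programming principle for the minmax value in max form, $\cV^\#(s_k,y)=\min_u\big[\max_{s\in[s_k,s_{k+1}]}\Psi(s,\bx(s))\vee\cV^\#(s_{k+1},\bx(s_{k+1}))\big]$, and using that $\Psi(s_k,y^h_k)$ is dominated by the running max over $[s_k,s_{k+1}]$, I would establish the one-step inequality $\Psi(s_k,y^h_k)\vee\cV^\#(s_{k+1},y^h_{k+1})\le\cV^\#(s_k,y^h_k)+\epsilon_k$ with $\epsilon_k=o(h)$ uniformly. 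The error $\epsilon_k$ collects three contributions, each controlled on $\mathbb{B}(0;R)$: the $O(h^2)$ one-step discrepancy between $\bx(s_{k+1})$ and $y^h_k+hf_h$ (through the Lipschitz $\cV^\#$), and two $o(h)$ terms coming from replacing $\cV^\#$ by $\cV^{\#,h}$ in the definition of the near-optimal $u^h_k$, invoking $\|\cV^\#-\cV^{\#,h}\|=o(h)$.

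The key structural observation is that, although there are $n_h\sim T/h$ steps, the max does not make errors accumulate multiplicatively: using the elementary inequality $a\vee(b+\epsilon)\le(a\vee b)+\epsilon$ for $\epsilon\ge 0$, an induction on $k$ gives $\big(\max_{0\le j\le k-1}\Psi(s_j,y^h_j)\big)\vee\cV^\#(s_k,y^h_k)\le\cV^\#(0,x)+\sum_{j<k}\epsilon_j$. At $k=n_h$, the terminal condition $\cV^\#(T,\cdot)=\varphi\vee\Psi(T,\cdot)$ turns the left-hand side into the full discrete cost, while $\sum_{j<n_h}\epsilon_j\le n_h\cdot o(h)=o(1)$; this yields the matching upper bound and hence $(i)$. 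For $(ii)$, I would invoke Arzel\`a--Ascoli: the trajectories $\bx^h$ are uniformly bounded and equi-Lipschitz (bounded $f$ on $\mathbb{B}(0;R)$), so cluster points $\bar\by$ exist in $C^0([0,T])$. Assumption \Hyp{4} enters exactly here, through the compactness of the trajectory set recalled in Remark~\ref{rem.existence} (see \cite[Theorem~1.4.1]{CELAUB}), guaranteeing $\bar\by\in\cS_{[0,T]}(y)$. Finally, since $\bx\mapsto\varphi(\bx(T))\vee\max_s\Psi(s,\bx(s))$ is continuous for the uniform topology, passing to the limit along a converging subsequence in $(i)$ shows that $\bar\by$ realizes the value $\cV^\#(0,x)$, i.e.\ is optimal. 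The main obstacle I anticipate is the bookkeeping in the one-step estimate: certifying that the replacement of $\cV^\#$ by $\cV^{\#,h}$ at the argmin stage is genuinely $o(h)$ and not merely $O(h)$, since it is precisely this that keeps the telescoped error $o(1)$ after $1/h$ steps.
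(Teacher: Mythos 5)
The paper does not actually prove Theorem~\ref{th:traj-1}: it is stated without proof, and even for the Bolza analogue (Theorem~\ref{th:traj-algo1}) the text only points to the arguments of Rowland--Vinter. So there is no proof in the paper to compare against line by line. Your proposal is, in substance, a correct and complete realization of that standard argument adapted to the minmax structure: the two-sided estimate, the replacement of the $f_h$-reconstruction by the genuine trajectory driven by $\bu^h$, the one-step dynamic programming inequality with per-step error $\epsilon_k=o(h)$, and above all the observation that the max structure prevents multiplicative error accumulation via $a\vee(b+\epsilon)\le(a\vee b)+\epsilon$ --- which is exactly why the telescoped bound is $n_h\cdot o(h)=o(1)$ and why the hypothesis $\|\cV^\#-\cV^{\#,h}\|=o(h)$ (rather than $O(h)$) is the right one. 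Part $(ii)$ via Arzel\`a--Ascoli, the compactness of the trajectory set from Remark~\ref{rem.existence}, and the continuity of $\bx\mapsto\varphi(\bx(T))\vee\max_s\Psi(s,\bx(s))$ for the uniform topology is also the intended route.

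One point you should make explicit, because as written it is the only real gap: assumption \Hyp{4} is needed already in the upper bound of part $(i)$, not only for the compactness argument in part $(ii)$. The true dynamic programming principle for $\cV^\#$ takes the infimum over \emph{measurable} controls on $[s_k,s_{k+1}]$, whereas Algorithm~2 minimizes over \emph{constant} controls $u\in U$. For a measurable control the reached endpoint is, up to $O(h^2)$, of the form $y^h_k+h\bar f$ with $\bar f\in\mathrm{co}\,f(s_k,y^h_k,U)$; without convexity of the velocity set the distance from such a point to the constant-control reachable set can be of order $h$ times a fixed constant, and these errors would telescope to $O(1)$, destroying the upper bound. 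Under \Hyp{4} one picks $u\in U$ with $f(s_k,y^h_k,u)=\bar f$ and the constant-versus-measurable gap drops to $O(h^2)$ per step, which is what your bookkeeping implicitly assumes. With that clause inserted in the derivation of the one-step inequality, the argument is sound.
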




\subsection{Numerical examples}
\paragraph{Test 1 - Unconstrained Zermelo problem} In this example, we consider the same setting as in Section~\ref{sec.3.4}.  To compute the minimal time function, we use the level-set approach  described in Section 4.2. We consider 
a domain of computation large enough to contain the initial position $(0,0)$ and the final target $y_f=(20,1)$. More precisely, the computation will be performed on $\cD:=[-1,21]\times [-0.5,1.5]$. 
We use the finite difference scheme \eqref{eq:schemeUP} combined with ENO approximation  and the  Lax-Friedrichs numerical Hamiltonian  \eqref{eq.HLF}.
Figure~\ref{fig.HJB.Zermelo1} shows the optimal solution computed on a uniform grid with $500\times 100$ nodes. The optimal time to steer the system from the initial point $(0,0)$ to the final state $y_f=(20,1)$ is $4.94916$.

While the approximation of the optimal trajectory on Figure~\ref{fig.HJB.Zermelo1}(left) seems quite accurate, the optimal control law in Figure~\ref{fig.HJB.Zermelo1}(right)  presents some oscillations. 
This happens because the control law is constructed in a ``blind'' way: in fact, in the reconstruction algorithms, at each time step, the control value is computed to follow a minimal path. It may happen that several values of the control lead to the same position of the trajectory. The reconstruction process picks one of the optimal control values and this arbitrary choice may generate oscillations. Notice also that Theorems~\ref{th:traj-algo1} and \ref{th:traj-1} state the convergence of the reconstructed optimal trajectories but not the optimal control laws (convergence of control laws would require additional assumptions).

In this example, the minimum time function $\cT$ is continuous on its domain, as it can be seen on Figure~\ref{fig.HJB.Zermelo1_Tmin}, where some level-sets of the $\cT$ are presented.

\begin{figure}[!ht]
    \centering
    \includegraphics[width=0.9\textwidth,height=0.3\textheight]{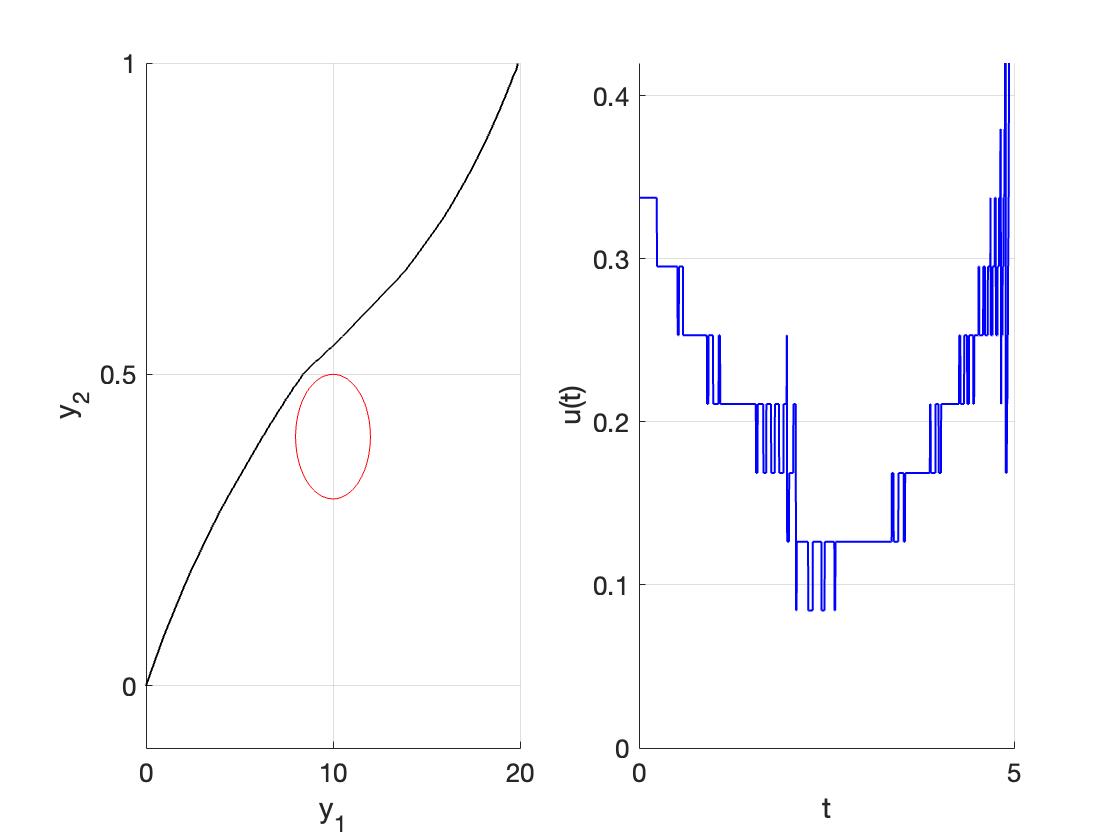}
    \caption{\small(Test 1) Approximation of the optimal trajectory for Zermelo problem with HJB approach.}
    \label{fig.HJB.Zermelo1}
\end{figure}

\begin{figure}[!ht]
    \centering
    \includegraphics[width=0.9\textwidth,height=0.3\textheight]{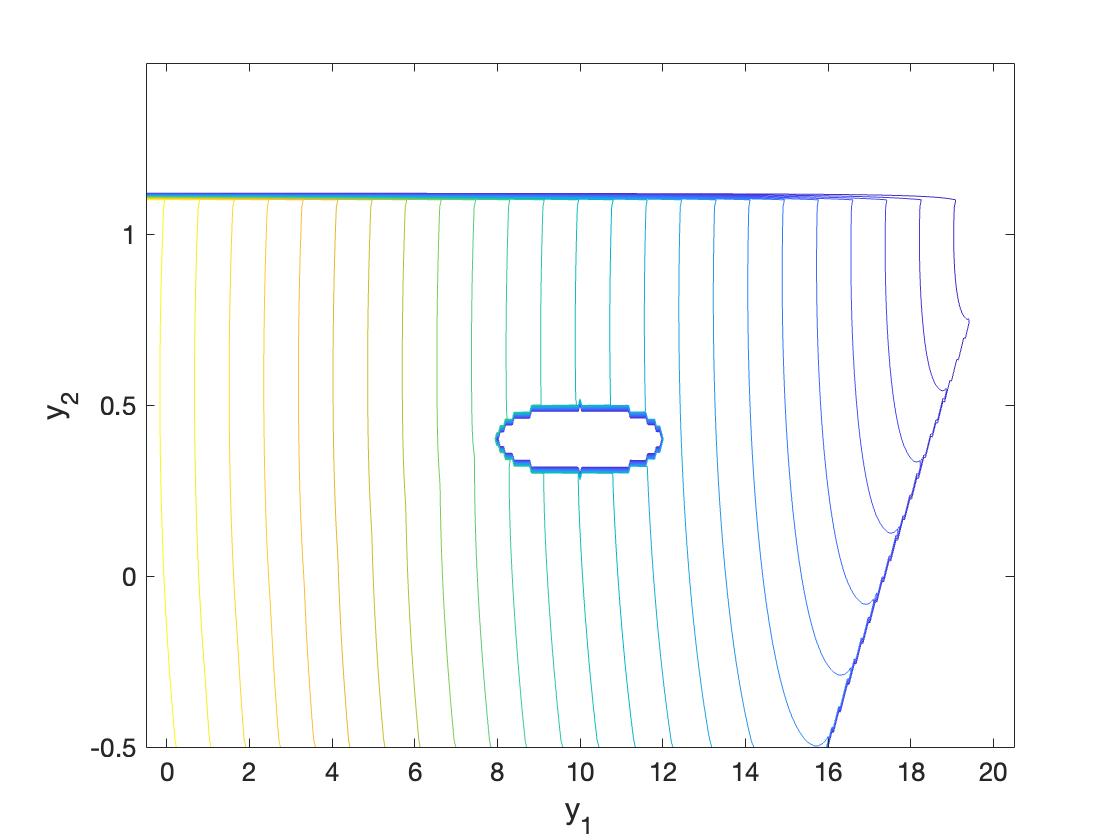}
    \caption{\small (Test 1) Level sets of the minimum time function on the domain $[-0.5, 20]\times [-0.5,1.1]$.}
    \label{fig.HJB.Zermelo1_Tmin}
\end{figure}
Recall that  for this version of Zermelo problem, the shooting method provides a globally optimal trajectory or a locally optimal trajectory depending on the initialization of the adjoint state (see Section 4). By using the HJB approach, the reconstructed trajectory is an approximation of the global optimum, even when performing the calculations on very coarse grids. Note also that an approximation of the derivative of the minimal time function at the initial position (0,0) can be computed. In our simulation, we obtain
$$D_x\cT^\Delta(0)= (-0.24585 ,  -0.09163)^{\sf T},
\qquad \mbox{and } \cT(0)=4.94916.$$
 An initialization of the shooting method with this vector allows the method to converge towards the global solution in very few iterations.
 
\paragraph{Test 2 - Unconstrained Zermelo problem -  Case with a strong current.}
Here, we consider a variant of the Zermelo problem where the dynamics is given by
\begin{subequations}\label{eq:dyn_zermelo_Unconstrained}
\begin{eqnarray}
  & & \dot \by_1(s) =  \bv(s) \cos(\bu(s)) + 2-\frac12\by_2^2(s),\\
  & & \dot \by_2 (s) =  \bv(s) \sin(\bu(s)),
\end{eqnarray}
\end{subequations}    
where the control inputs are
the speed $\bv(s)$  and the angle of orientation $\bu(s)$ of the boat. The set of control values is $U:= [0,1]\times [0,2\pi]$.
In this example, the  drift is strong in the middle of the channel $\R\times [-2,2]$ and is zero along the channel banks.  
The target is a ball centred at the origin and with radius $r=0.1$. The domain of computation is 
$\cD:=[-5,2]\times[-2,2]$. 
\medskip

- {\bf Test 2-1}:   The numerical simulations are
performed on a uniform grid with $N_{x_1}=N_{x_2}=100$ nodes on each axis. The numerical results of this test are displayed in Figure~\ref{fig:zermelo_unconstrained_N=100}. The left-hand side of  Figure~\ref{fig:zermelo_unconstrained_N=100} shows level sets of the minimum time function. Some  trajectories starting from different initial positions are given in the right-hand side of 
Figure~\ref{fig:zermelo_unconstrained_N=100}. 
\medskip

- {\bf Test 2-2 }:  Here, the numerical simulations are
performed on a uniform grid with $N_{x_1}=N_{x_2}=500$ nodes on each axis. The numerical results of this test are given in Figure~\ref{fig:zermelo_unconstrained_N=500}.

\begin{figure}[!ht]
    \centering
    \includegraphics[width=0.48\textwidth]{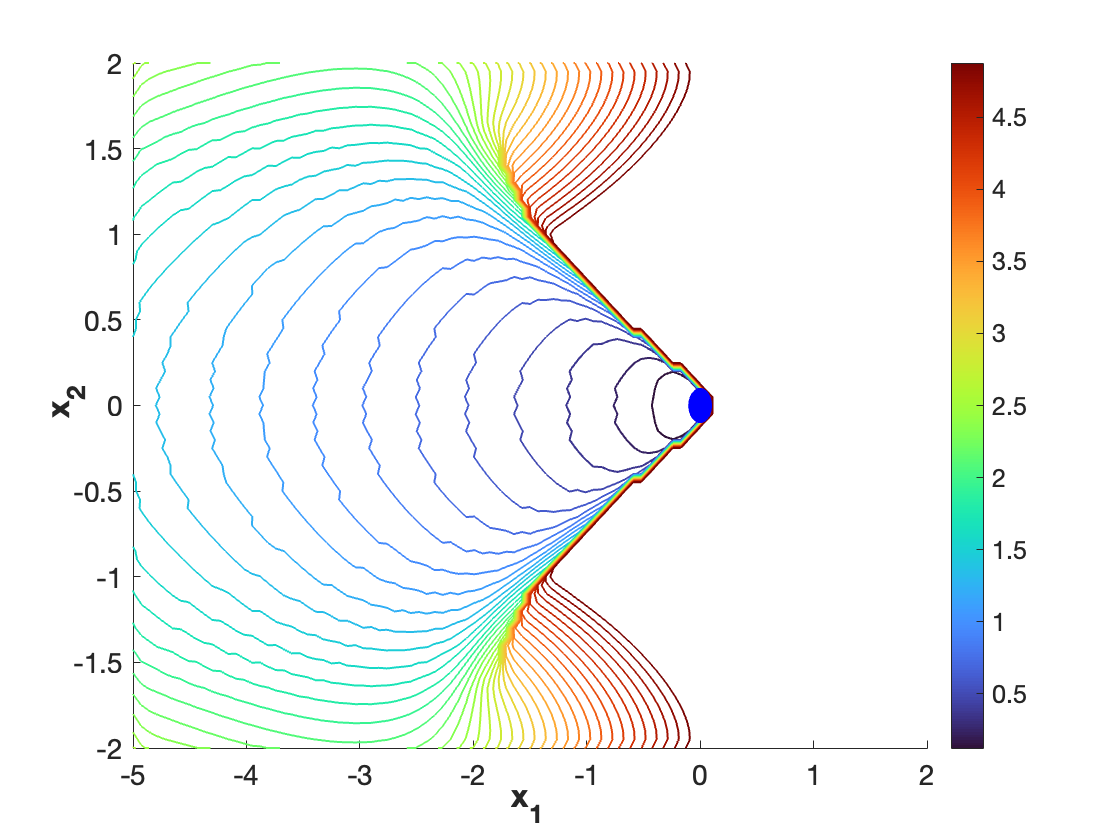}
    \includegraphics[width=0.48\textwidth]{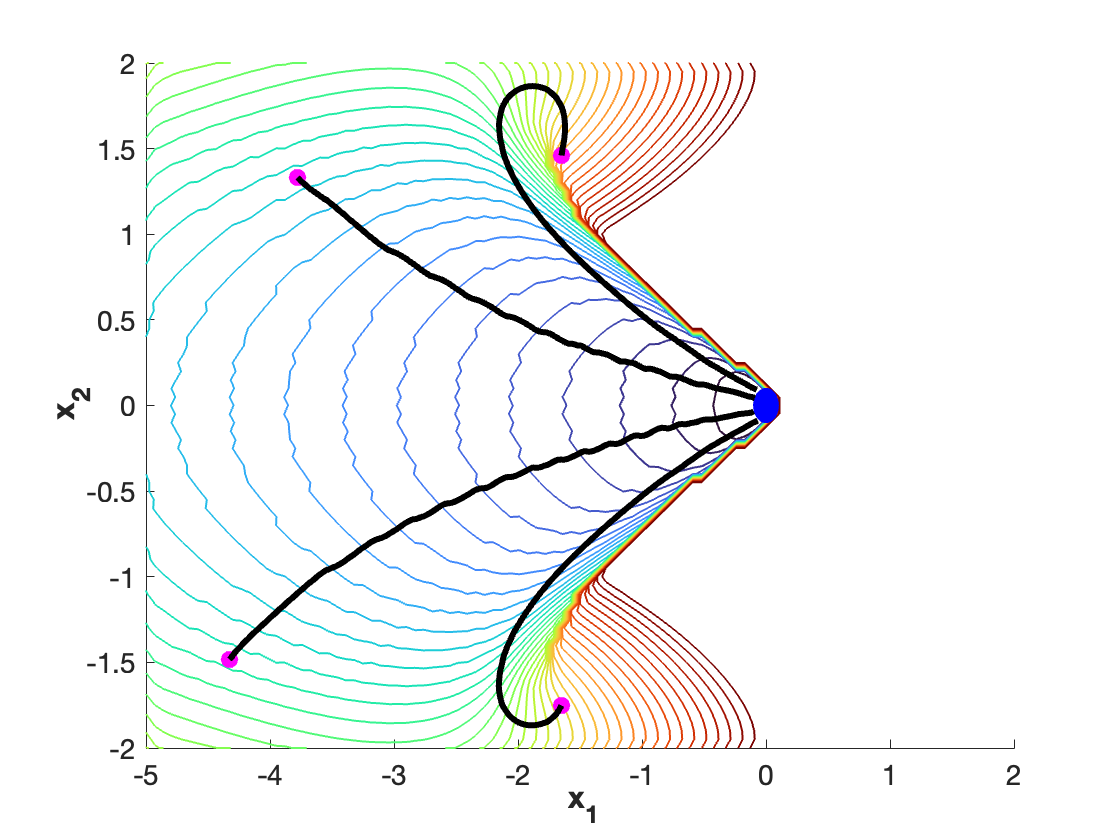}
    \caption{(Test 2-1) Zermelo problem. The figure on the left-hand side  shows some level-sets of the minimum time $\{x\mid \cT(x)=c\}$ for $c$  between $0$ and $5$.  The figure on the right displays some trajectories (the black curves) starting from different initial  positions. The simulations presented in this figure are performed on a grid with  $N_{x_1}=N_{x_2}=100$}\label{fig:zermelo_unconstrained_N=100}
\end{figure}

\begin{figure}[!ht]
    \centering
    \includegraphics[width=0.48\textwidth]{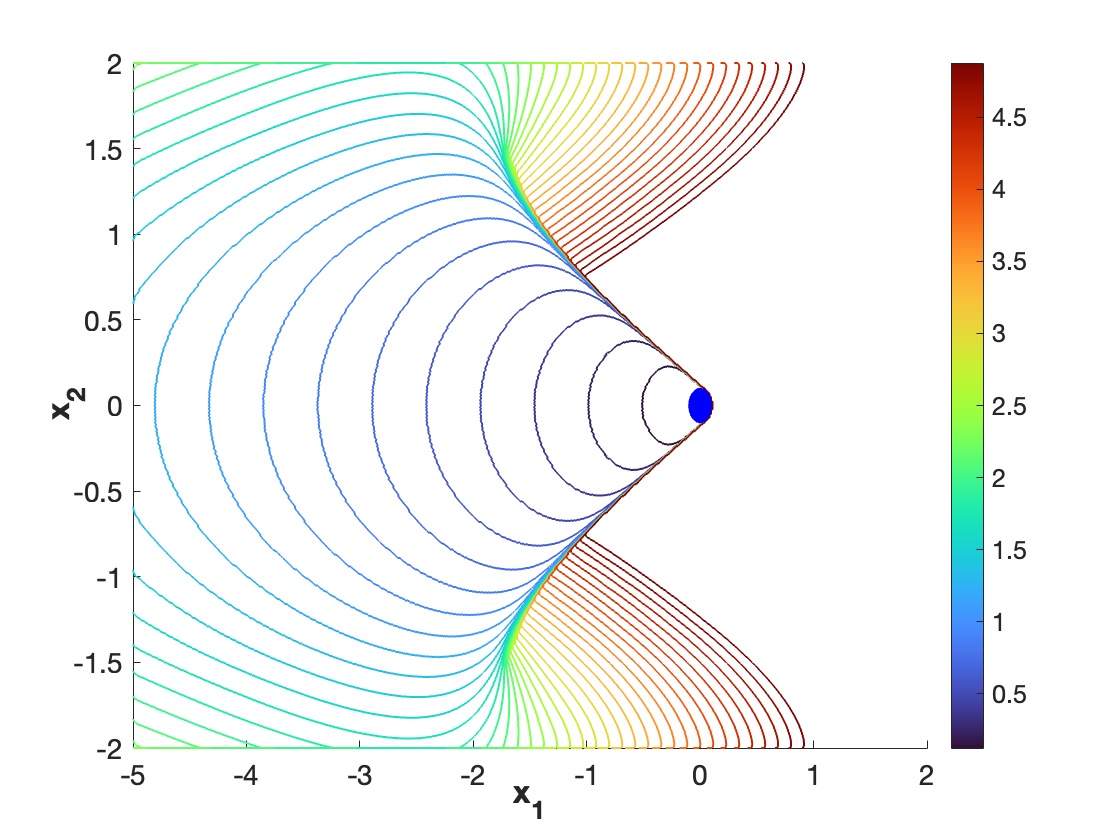}
    \includegraphics[width=0.48\textwidth]{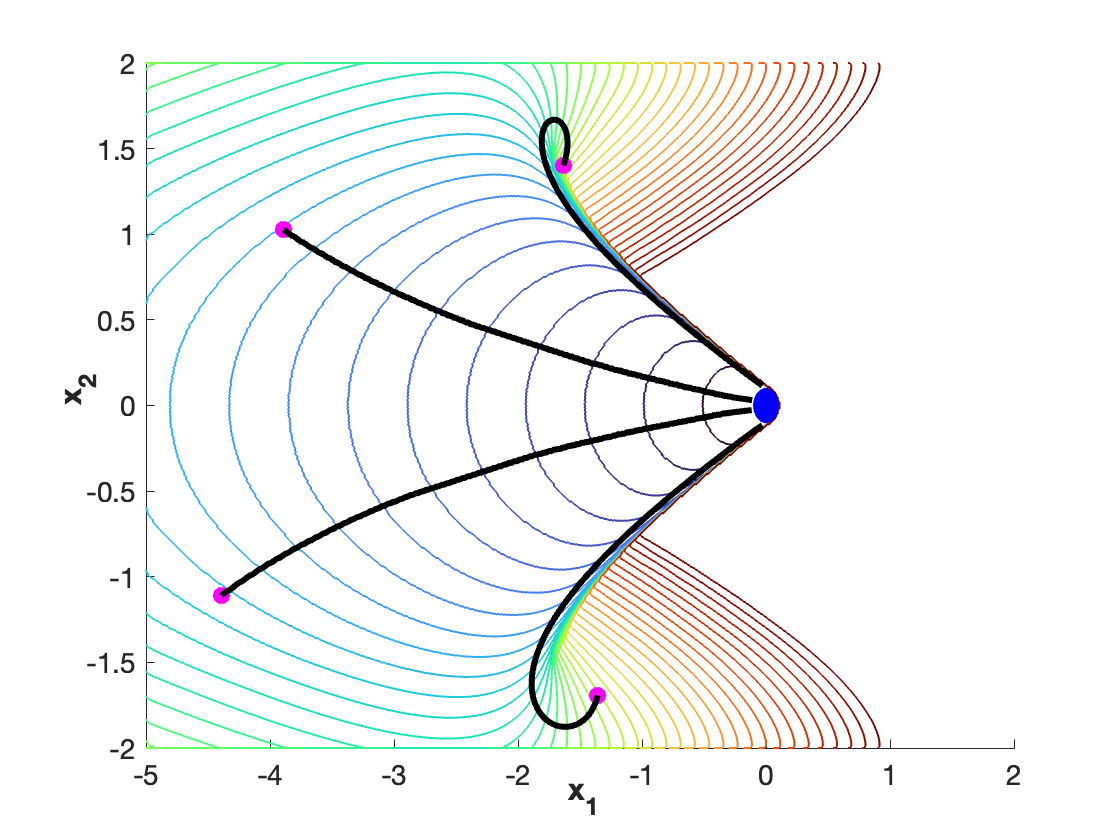}
    \caption{(Test 2-1) Zermelo problem. The figure on the left-hand side  shows some level-sets of the minimum time $\{x\mid \cT(x)=c\}$ for $c$  between $0$ and $5$.  The figure on the right displays some trajectories starting from different initial  positions. The simulations presented in this figure are performed on a grid with  $N_{x_1}=N_{x_2}=500$}\label{fig:zermelo_unconstrained_N=500}
\end{figure}
%

Numerical convergence can be observed when refining the size of the grid (i.e., at the increase of $N_{x_1}$ and $N_{x_2}$).

\paragraph{Test 3 - Constrained Zermelo problem.}
Consider again the same dynamics for Zermelo problem as in the previous paragraph. Now, 
 the state is required to avoid the rectangular obstacle  $[-2.5,-1.5]\times [-0.5,0.5]$. Figure~\ref{fig:zermelo_cons} shows the level sets of the minimum time function and some samples of optimal trajectories.  Figure~\ref{fig:zermelo_cons} (left) displays the results obtained with $250\times250$ nodes on the domain of computation, while  Figure~\ref{fig:zermelo_cons} (right) corresponds to computation on grid of 
 $500\times500$ nodes.  Notice that the optimal trajectories avoid the obstacle (in red) but they tend to get closer to the central section of the channel, in which the current  is stronger. Moreover, gradient singularities upstream of the obstacles indicate points at which the optimal trajectory is not unique (e.g., it can go either left or right of the obstacle). 
    
\begin{figure}[!ht]
    \centering
    \includegraphics[width=0.45\textwidth]{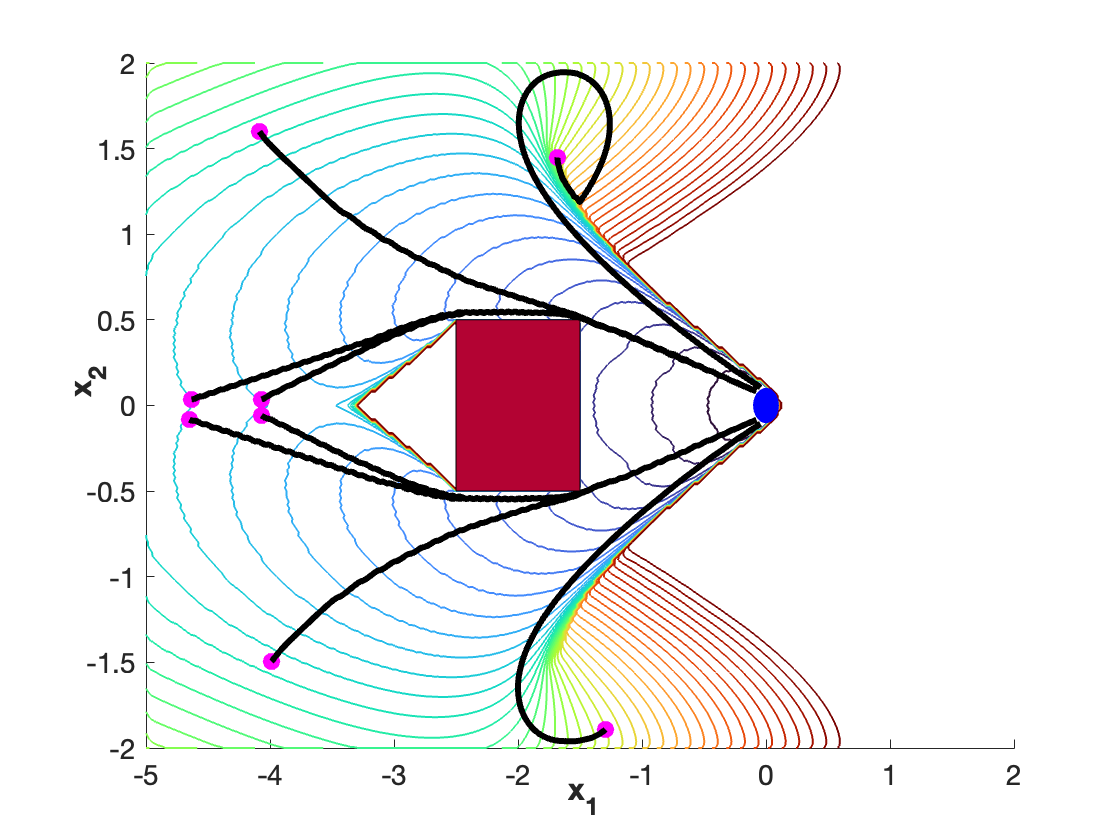}
    \includegraphics[width=0.45\textwidth]{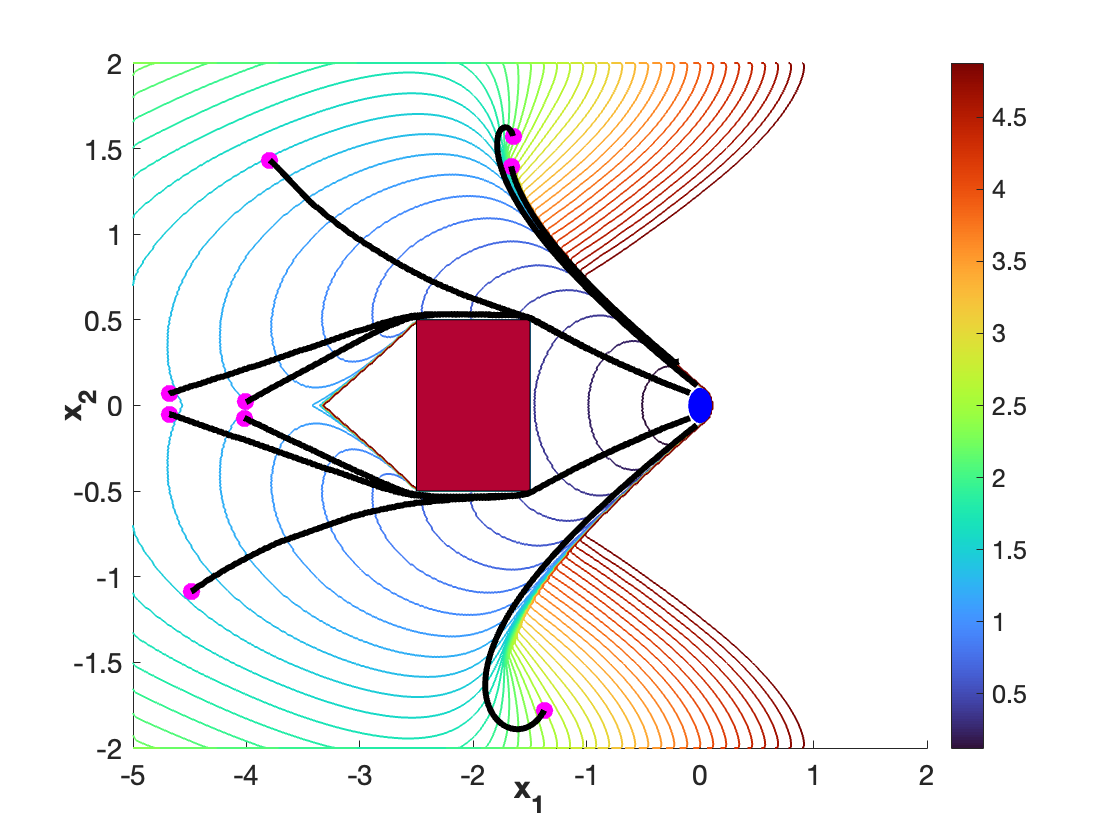}
    \caption{(Test 3) Zermelo problem with an obstacle. The two figures display level sets of the  minimum time function $\{x\mid {\cal T}(x)=c\}$ for some values of $c$ between $0$ and $5$, and some optimal trajectories starting from different initial positions. The figure on the left-hand side displays the numerical results obtained on a grid with $N_{x_1}=N_{x_2}=250$. The results on a grid with $N_{x_1}=N_{x_2}=500$ are given in the right-hand side of  the figure}\label{fig:zermelo_cons}
\end{figure}

\paragraph{Test 4 - Goddard Problem.}
Now, consider the Goddard problem as described in Sections~\ref{s32} and \ref{Section.Goddard_Tir}. 
The problem is with free final time and state constraints. Since the inward pointing condition is not satisfied, we reformulate the problem as described in Section~\ref{Aux_CO}.
We compute the auxiliary value function
(in dimension 4)
on  $\cD=[1.0,1.2]\times[0,0.12]\times[0,1]\times [0,1]$.  We use a uniform grid with 
 $N_x^4$ node points (i.e., $N_x$ points on each axis).
In the sequel, we will choose $N_x=20$
and $N_x=40$ so that the computation of the value function and the  optimal trajectory is performed in less than 1 minute.

\begin{figure}[!ht]
    \centering
    \includegraphics[width=1\textwidth]{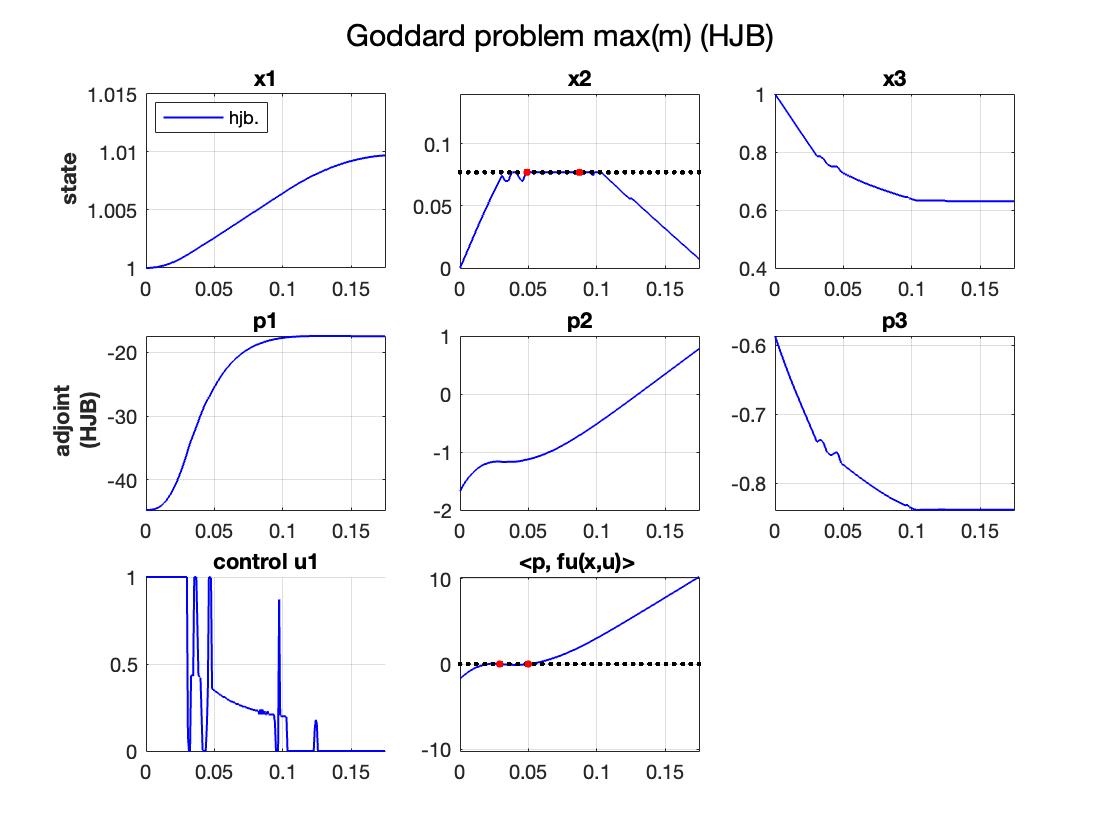}
    \caption{(Test 4) Goddard problem solved by HJB approach, with $N_x=20$}    \label{fig:Goddard20}
\end{figure}

Figure~\ref{fig:Goddard20} corresponds to a simulation with $Nx=20$, while Figure~\ref{fig:Goddard40} corresponds to a simulation with $Nx=40$. 
In both figures, we show the three state variables in the top line, and the adjoint states in the middle line. In the third line, the control variable is displayed on the left. We present also, in the middle of the third  line, an approximation  of the 
derivative of the Hamiltonian with respect to variable $u$. 

In Figures~\ref{fig:Goddard20}--\ref{fig:Goddard40}, 
we notice that  the derivative of the Hamiltonian w.r.t. the control variable $u$ vanishes identically on a time interval whose entry and exit times are indicated by red dots (middle of the third line). On this time interval,  the control law is singular. Moreover, we notice that the constraint on the velocity $\bx_2=\bv$ is saturated on another time interval (the entry and exit times are indicated by red dots in the figure situated in the middle of the first line).

\begin{figure}[!ht]
    \centering
    \includegraphics[width=1\textwidth]{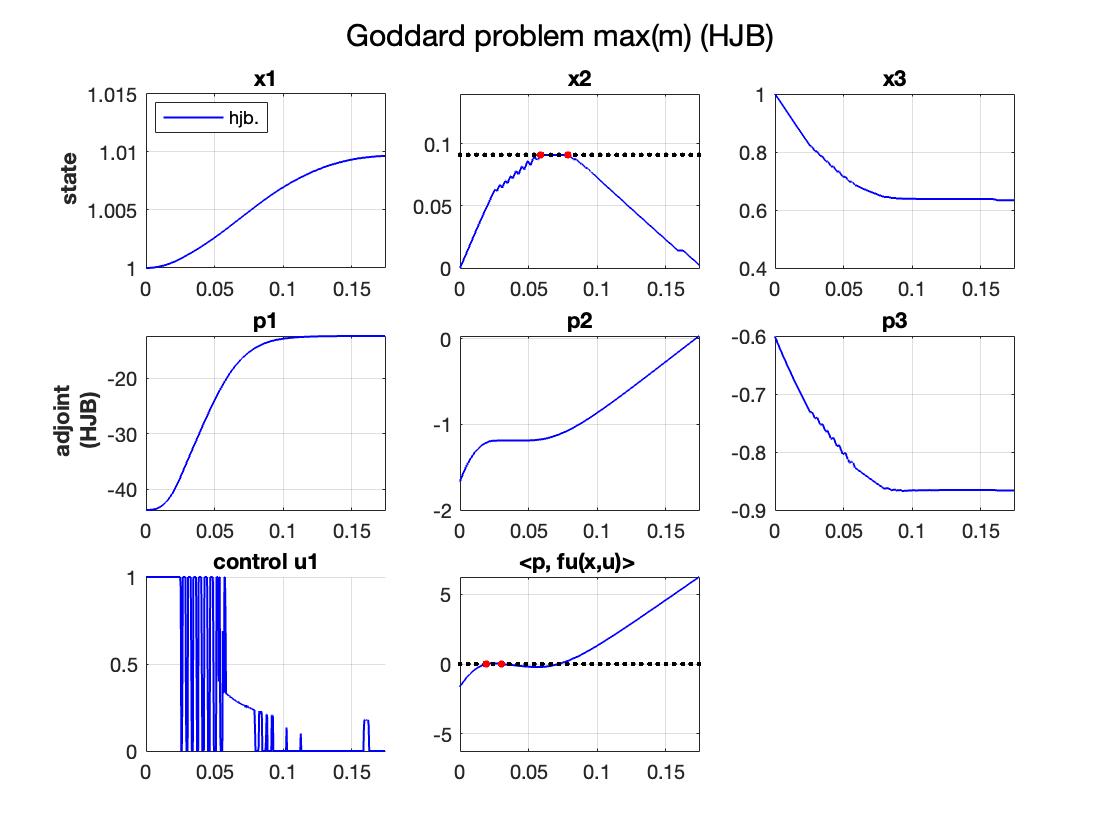}
    \caption{(Test 4) Goddard problem solved by HJB approach, with $N_x=40$}
    \label{fig:Goddard40}
\end{figure}

In this example, we observe again that the approximation of the control law presents high oscillations (for the same reasons we mentioned in Test 2). We notice that the oscillations persist even when $N_x$ increases. These oscillations occur mainly during the time interval when the trajectory is singular-constrained.
Besides, we observe  a numerical convergence of the trajectories  when refining the size of the grid (i.e., increasing $N_{x}$). 
At the increase of the number of nodes, CPU time and memory used also increase, but we notice that even a coarse grid calculation, by HJB approach, provides interesting results that can at least serve as an initialization for a more precise method like the shooting method. The computation of the value function and the reconstruction of the optimal trajectory give an approximation of 

- the co-state $\bp(0)=(\bp_1(0),\bp_2(0),\bp_3(0))$ (by using the sensitivity relations in Section~\ref{sec-PMP-HJB});

- the entry point of the singular arc $t_1$, the entry and exit points of the boundary arc $t_2$ and $t_3$, and the free final time $t_f$.

In  Table~\ref{Table:PMP-HJB}, the  second row  presents the numerical results obtained by HJB
simulations on a grid of $N_x=20$ points on each axis. The fourth row presents the numerical  results  obtained by the shooting method (initialized with the values obtained by  the HJB simulation). 
The convergence of the shooting method requires 43 iterations. 
The adjoint vector being defined up to a multiplicative constant, we give in the third row of Table~\ref{Table:PMP-HJB} the results of HJB with a re-normalization of the adjoint vector. This makes it easier to compare the results of the  HJB simulation and those of the shooting method. 
\begin{table}[!ht]
\centering
\begin{tabular}{|m{2.4cm}||m{1.4cm}|m{1.4cm}|m{1.4cm}||m{1.4cm}|m{1.4cm}|m{1.4cm}|m{1.4cm}|}
\hline
  & $\bp_1(0)$  & $\bp_2(0)$  & $\bp_3(0)$ & $t_1$ &   $t_2$ & $t_3$ & $t_f$ \\ \hline \hline 
 HJB simulation &    
    5.205e1 & 1.947e0 & 
    6.826e-1 & 2.912e-2 &
    4.980e-2 & 8.735e-2 & 
    1.747e-1   \\ \hline \hline
HJB simulation
with a re-normalization of $p(0)$&    
3.945e0 &  1.476e-1  &  5.174e-2
     & 2.912e-2 &
    4.980e-2 & 8.735e-2 & 
    1.747e-1   \\ \hline \hline    
Shooting method &    
    3.945e0& 
    1.504e-1 & 5.371e-2 & 2.351e-2 &
    5.974e-2 & 
    1.016e-1& 
    2.020e-1 \\ \hline   
\end{tabular} 
\caption{Godard problem. The second row  presents the results obtained by HJB
simulation on a grid with $N_x=20$ points on each axis. 
The third row presents the same results of row 2 with a re-normalization of the costate vector. 
The fourth row presents the  results  obtained by the shooting method (initialized by the HJB results given in row 2). \label{Table:PMP-HJB}}
\end{table}


\section{Optimistic planning algorithms}
\label{s42}

As mentioned previously, direct and indirect methods are quite simple to implement, and provide locally optimal solutions with high accuracy.  These methods depend  on the initialization -- especially the shooting method  which is particularly sensitive to the initialization and also requires an \emph{a priori} knowledge of the optimal trajectory structure (existence of bang and/or singular  and/or saturated arcs). On the other hand, the HJB approach always provides a global optimum, but, if ever feasible, it requires a greater computational effort because of the high dimension of the space in which the value function must be computed.

A further approach that we present here is a global approach based on a  discretization in the {\em space of controls}, combined with \emph{optimistic planning} (OP) algorithms \cite{ref52,ref51}
(without  requiring any discretization of the state space). This approach is interesting especially for applications where the control dimension $r$ is  lower compared to the state dimension $d$.   
On a given discretization of the time interval, our approach will seek to identify the best control strategy to apply on each time sub-interval. The OP methods perform the optimal control search by branch and bound on the control set, always refining the region with the best lower bound of the optimal value (this is what justifies  the term ``optimistic'').  An interesting feature of these algorithms is the close relationship between computational resources and quasi-optimality,  which exploits some  ideas of reinforcement learning \cite{mun-2014}. 
Indeed, for  given computational resources,  the OP approaches provide a sub-optimal strategy
whose performance is {\em close } to the optimal value (with the available resources). 

First, for $N\geq 2$, consider a uniform partition of $[0,T]$ with $N+1$ time steps: $t_k=k\Delta t$, $k=0,\dots,N$,
where $\Delta t=\frac{T}{N}$ is the step size. 
For a sequence of actions $\bu=(u_k)_{0\leq k \leq N-1}\in U^N$, we consider $(y^{x,\bu}_k)_{0\leq k \leq N}$ the trajectory solution of the discrete-time dynamical system
\begin{equation}
  \label{eq:discrete}
\begin{cases}
    y_0=x,\\
    y_{k+1}=F_k(y_k,u_k)\quad k=0,...,N-1,
    \end{cases}
\end{equation}
where $F_k(x,u)$ is an approximation of the solution to the system 
$\dot\bx(s)=f(s,\bx(s),u)$ on  $(t_k,t_{k+1})$,
 with the initial condition $\bx(t_k)=x$. More precisely, we assume that 
 $$\left\|F_k(x,u)-x-\int_{t_k}^{t_k+1}f(s,\bx(s),u)\,ds\right\| = \mathrm{O}(\Delta t) \quad \mbox{ for all } x\in \R^d, \ u\in U.$$
Consider also an instantaneous cost function $L_k$ 
that approximates the integral of $\ell$ over an interval $[t_k,t_{k+1}]$, for $k=0,...,N-1$:

$$\left\|L_k(x,u)-\int_{t_k}^{t_k+1}\ell(s,\bx(s),u)\,ds\right\| = \mathrm{O}(\Delta t) \quad \mbox{ for all } x\in \R^d, \ u\in U.$$

In this section, we assume that \Hyp{0}-\Hyp{4} are satisfied and that  $f$ and $\ell$ are Lipschitz continuous with respect to the control variable. The approximations $F_k$ and $L_k$ are also assumed to be  Lipschitz continuous:
\begin{eqnarray*}
& &  \|F_k(x,u)-F_k(x',u')\| \leq L_{F,x} \|x-x'\| + L_{F,u} \|u - u'\|,\\ 
& &  |L_k(x,u)-L_k(x',u')| \leq L_{L,x} \|x-x'\| + L_{L,u} \|u - u'\|,
\end{eqnarray*}
for every $x\in \R^d$ and $u\in U$. Moreover, we assume that the Lipschitz constants of $F_k$ and $L_k$ are related to the Lipschitz constants of $f$ and $\ell$ by the following relations
 \begin{subequations}\label{eq.LFxLFa}
 \begin{eqnarray}
 & &  L_{F,x}  := 1 + \Delta t L_{f,x}C \quad   L_{F,u}  := \Delta t L_{f,u} C \ \ \mbox{and} \\
 & &
  L_{L,x}  := \frac{\Delta t }{2}L_{\ell,x} C,  \quad  L_{L,a}  := \Delta t (L_{\ell,a}+C),
\end{eqnarray}
\end{subequations}
where the constant $C>0$ may depend on $\Delta t$ and the Lipschitz constants of $f$ and $\ell$.

Now, we define the  state-constrained optimal control problem
\begin{equation}
\label{422}
    V(x):=\underset{\bu=(u_k)_k\in U^N}{\inf}
     \Bigg\{ \sum_{k=0}^{N-1} L_k(\yxuk,u_k) + \varphi(\yxuN)   \hspace{0.1cm}
     \mid \hspace{0.1cm} g(\yxuk)\leq 0
     \hspace{0.2cm} \forall k=0,...,N, \hspace{0.2cm} g_f(y^{x,\bu}_N)\leq 0 \Bigg\}.
\end{equation}
Then, for the discrete auxiliary control problem,
we define the cost functional $J$ by 
\begin{equation}
\label{424}
    J(x,z,\bu) := \Big( \sum_{k=0}^{N-1} L_k(\yxuk,u_k) + \varphi(\yxuN) -z \Big)  \bigvee
    \Big(\underset{0\leq k\leq N}{\max}\hspace{0.1cm} g(\yxuk)\Big)\bigvee g_f(y^{x,\bu}_N)
\end{equation}
(for $(x,z,\bu=(u_k))\in \R^d\times\R\times U^N$)
and the corresponding auxiliary value is defined, for $(x,z)\in \R^d\times\R$, by
\begin{equation}\label{eq:pbaux}
    W(x,z) := \underset{\bu=(u_k)_k\in U^N}{\inf} J(x,z,\bu).
\end{equation}
Notice  that $W(x,z)$ converges to $\WW(0,x,z)$ as $N\to \infty$ (i.e., $\Delta t=\frac{T}{N} \to 0$), where the continuous value function $\WW$ is defined in \eqref{eq:wg}.
Under Assumptions \Hyp{0}-\Hyp{4},  the error estimate of $|W(x,z)-\WW(0,x,z)|$ is 
   bounded by $O(\frac{1}{N})$,  see \cite[Appendix B]{Bok-Gam-Zid-2022}. 
  Furthermore,    the sequence of discrete-time optimal trajectories  (for $N\in \N$) provide convergent 
  approximations of optimal trajectories of the continuous problem \eqref{eq:pbaux}, see \cite{ref5}.

\begin{rem}
The discrete dynamics $F$ and the discrete cost $L$  can be defined as  approximations  of the time-continuous function $f$ and $\ell$. 
It is worth mentioning that the algorithms that will be presented in this section can also handle situations where the dynamics $F$ and $L$ are obtained by some statistical models which can be enriched during the computational process. 
\end{rem}

With similar arguments as in the proof of  \eqref{eq:vsharp-wg}, we have
\begin{equation}\label{eq.V-W}
V(x) =\inf\{z \mid W(x,z)\leq 0\}.
\end{equation}
For the sake of simplicity and without loss of generality, we suppose that the control is of dimension $r=1$ and we denote by $D$ its maximal diameter ($\forall a,a'\in A, \|a-a'\|\leq D$), although the approach can be generalized to control variables in multiple dimensions. 

Planning algorithms are based on the principles of optimistic optimization. In order to minimize the objective function $J$ over the space $U^N$, we refine, in an iterative way the search space into smaller subsets. 
A search space, called node and denoted by $\U_i$ with $i\in \mathbb{N}$, is a Cartesian product of sub-intervals of $U$, i.e., $\U_i:=U_{i,0}\times U_{i,1}\times\cdots\times U_{i,N-1} \subseteq U^N$, where $U_{i,k}$  represents the control interval at time step $k$, for $k=0,\ldots,N-1$. 
The collection of nodes will be organized into a tree $\Upsilon$ that will be constructed progressively by expanding the tree nodes. Expanding a node $\U_i$, with $i\in \mathbb{N}$, consists in choosing an interval $U_{i,k}$, for $k=0,\ldots,N-1$, and splitting it uniformly to $M$ sub-intervals where $M>1$ is a parameter of the algorithm. 
The order of expanded nodes and the intervals that have to be split will be chosen in such a way to minimize the cost $J$. 
For now, we introduce some useful notations related to the tree $\Upsilon$:
\begin{itemize}[align=left, leftmargin=*, itemindent=0mm, nolistsep]
    \item We associate, for any node $\U_i \in \Upsilon$, a sample sequence of controls $\bu_i:=(u_{i,k})_{k=0}^{N-1}\in \U_i$ such that $u_{i,k}$ corresponds to the midpoint of the interval $U_{i,k}$ for any $k=0,\ldots,N-1$.
    \item Denote $d_{i,k}$, for $k=0,...,N-1$, the diameter of the interval $U_{i,k}$ of some node $\U_i\in \Upsilon$. 
    In particular, 
   $$ d_{i,k} = \frac{D}{M^{s_i(k)}}, $$
   where $s_i(k)$ indicates the number of splits needed to obtain the interval $U_{i,k}$ for $k=0,\ldots,N-1$.
    \item The depth $p_i$ of a node $\U_i$ is  the total number of splits done to obtain this node:
    \begin{equation}
    \label{444}
    p_i:=\sum_{k=0}^{N-1}s_i(k).   
    \end{equation}
    We denote by $\texttt{Depth}(\Upsilon)$ the maximal depth in the tree $\Upsilon$.
    \item A node $\U_i$ is a tree leaf if it has not been expanded. The set of tree leaves is denoted by $\Lambda$. 
    \item Finally, we denote  by $\Lambda_p:=\Big\{\U_i \in \Upsilon \quad s.t. \quad p_i=p\Big\}$ the set of leaves of $\Upsilon$ of depth $p\in \mathbb{N}$.
\end{itemize}

By selecting controls at the intervals centers and by taking $M$ odd, we guarantee that after expanding a node $\U_i$ we generate at least one node $\U_j$ with  $J(x,z,\bu_j) \leq J(x,z,\bu_i).$ Indeed, the middle child $\U_j$
contains the control sequence of $\U_i$.
\begin{prop}
\label{prop441}
By the tree construction, there exists at least a leaf node $\U_i\in \Lambda$  containing an optimal control sequence and satisfying
\begin{equation}
    \label{441}
    J(x,z,\bu_i) -\sigma_{i}  \leq W(x,z) \leq J(x,z,\bu_i),
 \end{equation}
 where $\bu_i$ is the sample control sequence in  $\U_i$ and where
  \begin{equation}
 \label{442}
     \sigma_{i}:= \Big(\sum_{k=0}^{N-1}\beta_k d_{i,k} \Big) \bigvee  \Big(\sum_{k=0}^{N-1} \gamma_k d_{i,k} \Big),
 \end{equation}
with $\beta_k$ and $\gamma_k$ positive constants only depending on the Lipschitz constants of $F, L, \Phi$ and of $\Psi$.
\end{prop}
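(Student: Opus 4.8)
The plan is to split the claim into a covering property of the tree and a Lipschitz sensitivity estimate for the functional $\bu\mapsto J(x,z,\bu)$ of \eqref{424}. First I would check that the infimum defining $W(x,z)$ in \eqref{eq:pbaux} is attained: by \Hyp{0} the set $U$, hence $U^N$, is compact, and $\bu\mapsto J(x,z,\bu)$ is continuous, since the discrete flow $\bu\mapsto(y_k^{x,\bu})_{0\le k\le N}$ of \eqref{eq:discrete} is continuous (each $F_k$ is Lipschitz) and $J$ is built from it by composition with the continuous maps $L_k,\varphi,g,g_f$ and by finite maxima. Thus there is $\bu^\ast=(u_k^\ast)_k\in U^N$ with $J(x,z,\bu^\ast)=W(x,z)$. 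Next I would note that the current leaves always cover the whole control space: the root is $U^N$, and expanding a node replaces it by $M$ children whose control intervals tile those of the parent, so $\bigcup_{\U_j\in\Lambda}\U_j=U^N$. Hence $\bu^\ast$ lies in some leaf $\U_i\in\Lambda$, which is the node asserted by the statement; for it the right inequality $W(x,z)\le J(x,z,\bu_i)$ is immediate because $W(x,z)$ is an infimum and $\bu_i\in U^N$.

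The content is the left inequality, obtained by comparing the sample sequence $\bu_i$ (the vector of interval midpoints, which lies in $\U_i$ together with $\bu^\ast$) with $\bu^\ast$, so that $\|u_{i,k}-u_k^\ast\|\le d_{i,k}$ for every $k$. Setting $e_k:=\|y_k^{x,\bu_i}-y_k^{x,\bu^\ast}\|$, the Lipschitz bound on $F_k$ gives $e_0=0$ and $e_{k+1}\le L_{F,x}e_k+L_{F,u}d_{i,k}$, whence by a discrete Gronwall argument $e_k\le L_{F,u}\sum_{j=0}^{k-1}L_{F,x}^{\,k-1-j}d_{i,j}$, a bound nondecreasing in $k$ since $L_{F,x}\ge1$ by \eqref{eq.LFxLFa}. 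I would then estimate the three arguments of the maximum in \eqref{424} separately, using $|\max_\alpha a_\alpha-\max_\alpha b_\alpha|\le\max_\alpha|a_\alpha-b_\alpha|$: the running-plus-terminal cost differs by at most $\sum_{k=0}^{N-1}(L_{L,x}e_k+L_{L,u}d_{i,k})+L_\varphi e_N$, while the two constraint terms of \eqref{424} differ by at most $L_\Psi e_N$ and $L_\Phi e_N$ respectively (using that the bound on $e_k$ is nondecreasing in $k$), where $L_\varphi,L_\Psi,L_\Phi$ are the Lipschitz constants of $\varphi$ and of the constraint maps $g,g_f$ (denoted $\Psi,\Phi$ in the statement). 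Substituting the Gronwall bound expresses the cost estimate as $\sum_k\beta_k d_{i,k}$ and the two constraint estimates as $\sum_k\gamma_k d_{i,k}$, with $\beta_k,\gamma_k>0$ depending only on the Lipschitz constants of $F,L,\varphi,g,g_f$ and on $N$, not on the node $\U_i$.

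Assembling the three pieces, $|J(x,z,\bu_i)-J(x,z,\bu^\ast)|\le\big(\sum_k\beta_kd_{i,k}\big)\bigvee\big(\sum_k\gamma_kd_{i,k}\big)=\sigma_i$; since $J(x,z,\bu^\ast)=W(x,z)$ and $J(x,z,\bu_i)\ge W(x,z)$, this is exactly $J(x,z,\bu_i)-\sigma_i\le W(x,z)$, which together with the right inequality completes \eqref{441}. I expect the main difficulty to be organisational rather than conceptual: one must funnel the two constraint contributions into the single $\gamma$-sum so that $\sigma_i$ retains the two-term form \eqref{442}, and carry the geometric factors $L_{F,x}^{\,k-1-j}$ from the discrete Gronwall estimate through the sums so that the resulting $\beta_k,\gamma_k$ are manifestly independent of $\U_i$ and of $(x,z)$.
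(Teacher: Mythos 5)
Your proposal is correct, and the paper itself gives no proof of this proposition (it defers, as for Proposition 6.2, to the reference on optimistic planning); your argument — attainment of the infimum by compactness of $U^N$, the covering of $U^N$ by the leaves so that an optimal sequence sits in some $\U_i$, and a discrete Gronwall propagation of the control perturbation $\|u_{i,k}-u^\ast_k\|\le d_{i,k}$ through $F_k$, $L_k$, $\varphi$, $g$, $g_f$ combined with $|\max_\alpha a_\alpha-\max_\alpha b_\alpha|\le\max_\alpha|a_\alpha-b_\alpha|$ — is exactly the standard route and matches the structure of the bound \eqref{442}. The only cosmetic remarks are that the midpoint choice actually gives $\|u_{i,k}-u^\ast_k\|\le d_{i,k}/2$ (your weaker bound $d_{i,k}$ is of course sufficient), and that the constants $\beta_k$ inevitably also involve the Lipschitz constant of $\varphi$, which the statement's wording implicitly folds into that of $\Phi$.
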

In the optimistic planning algorithms,  at each iteration, one or several  optimistic nodes are chosen and split to get  from each node $M$ children ($M > 1$ is a fixed parameter of the algorithm). 
To expand a node $\U_i$, we choose an interval from $U_{i,0}\times U_{i,1}\times\cdots\times U_{i,N-1}$ and we partition it uniformly to $M$ sub-intervals. If we choose to split the interval $U_{i,k}$, for some $k=0,\ldots,N-1$, then $M$ nodes  will be generated and then the new error term $\sigma^+_i(k)$ is 
\begin{equation*}
    \sigma^+_i(k):= \Big(\sum_{j=0, j\neq k}^{N-1} \beta_j d_{i,j} + \beta_k \frac{d_{i,k}}{M}\Big)\bigvee \Big(\sum_{j=0, j\neq k }^{N-1} \gamma_j d_{i,j} + \gamma_k \frac{d_{i,k}}{M}\Big).
\end{equation*}
\\[-4mm]
Henceforth, in order to minimize the error $\sigma^+_i(k)$, the best choice of the interval to split, $k^*_i$, is given by:
\begin{equation}
    \label{443}
    k^*_i \in \underset{0\leq k \leq N-1 }{\text{argmin}}  \hspace{0.1cm} \sigma^+_i(k).
\end{equation}

The following result gives 
an upper bound on the error term $\sigma_i$, of any node $\U_i\in \Upsilon$.

\begin{prop}
\label{th441}
Assume that the number of split $M>L_{F,x}>1$. 
Consider a node $\U_i$ at some depth $p_i=p$. For $p$ large enough, the error $\sigma_i$ (defined in \eqref{442}) is bounded as follows:  
\begin{equation}
    \label{eq:445}
    \sigma_{i}\leq  \delta_p:=\ c_1(N)\,\Delta t\, M^{-\frac{p}{N}},
\end{equation}
where $c_1(N)>0$ is bounded independently of $N$.
\end{prop}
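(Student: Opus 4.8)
The plan is to exploit the explicit form of the constants $\beta_k,\gamma_k$ produced in the proof of Proposition~\ref{prop441}. There they arise by propagating a perturbation of the control at stage $k$ forward through the $N-1-k$ remaining steps of the discrete flow \eqref{eq:discrete}: Lipschitz-estimating the cost $J$ in \eqref{424} with respect to $u_k$ produces a factor $L_{F,u}=\mathrm{O}(\Delta t)$ for the initial shift and a factor $L_{F,x}^{\,N-1-k}$ for its transport to the terminal time, so that $\beta_k,\gamma_k\le \kappa\,\Delta t\,L_{F,x}^{\,N-1-k}$ for a constant $\kappa$ gathering the Lipschitz constants of $F,L,\varphi,g,g_f$. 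First I would record these bounds, observe that $\sigma_i$ in \eqref{442} is the maximum of two sums of exactly the same shape, and treat the $\beta$-sum (the $\gamma$-sum being identical up to renaming $\kappa$). The two structural facts driving everything are the geometric growth in $N-1-k$ and the standing hypothesis $M>L_{F,x}>1$.

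Next I would turn the splitting rule \eqref{443} into a balancing statement. Writing $w_k:=\beta_k d_{i,k}=D\,\beta_k\,M^{-s_i(k)}$, a split of interval $k$ multiplies $w_k$ by $1/M$, so minimising $\sigma^+_i(k)$ is a water-filling process that repeatedly refines the coordinate carrying the largest $w_k$. The key lemma is that, once $p$ is large enough for every coordinate to have been refined, this process keeps the weights balanced, $\max_k w_k\le M\min_k w_k$ along the expanded branch. Together with the exact identity
\[
\prod_{k=0}^{N-1}w_k=D^{N}\Big(\prod_{k=0}^{N-1}\beta_k\Big)M^{-p},
\]
which follows from $\sum_k s_i(k)=p_i=p$, the balance yields $\max_k w_k\le M\big(\prod_k\beta_k\big)^{1/N}M^{-p/N}$ and hence $\sum_{k=0}^{N-1}w_k\le N\,M\,\big(\prod_k\beta_k\big)^{1/N}M^{-p/N}$.

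It then remains to estimate the geometric mean. From $\beta_k\le\kappa\,\Delta t\,L_{F,x}^{\,N-1-k}$ one gets $\big(\prod_{k=0}^{N-1}\beta_k\big)^{1/N}\le\kappa\,\Delta t\,L_{F,x}^{(N-1)/2}$, and here the hypotheses are spent: by \eqref{eq.LFxLFa}, $L_{F,x}=1+\Delta t\,L_{f,x}C$ with $\Delta t=T/N$, so the Gronwall-type bound $L_{F,x}^{(N-1)/2}\le e^{\,TL_{f,x}C/2}$ holds uniformly in $N$, while $M>L_{F,x}$ guarantees that $M^{-p/N}$ is the correct $N$-uniform decay rate. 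Collecting the estimates and using $N\,\Delta t=T$ to absorb the combinatorial factor $N$ coming from the sum, the quantity $\sum_k w_k$ — and therefore $\sigma_i$, after taking the larger of the $\beta$- and $\gamma$-constants — is bounded by a multiple of $L_{F,x}^{(N-1)/2}M^{-p/N}$; rewriting this prefactor in the form $c_1(N)\,\Delta t$ and invoking the uniform bound on $L_{F,x}^{N}$ gives exactly \eqref{eq:445}, with $c_1(N)$ controlled independently of $N$.

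The main obstacle is the balancing lemma of the second step: proving that the \emph{myopic} rule \eqref{443}, which minimises only the one-step error $\sigma^+_i$, nevertheless drives the weighted diameters $w_k$ to within a factor $M$ of one another, and quantifying how large $p$ must be for this to hold given the geometric spread of the $\beta_k$ (this is the meaning of ``$p$ large enough''). The secondary, but still delicate, point is the uniform-in-$N$ bookkeeping: one must check that the geometric growth $L_{F,x}^{\,N-1-k}$ of the sensitivities is exactly compensated by the geometric decay $M^{-s_i(k)}$ — possible precisely because $M>L_{F,x}$ — and that the cancellation $N\,\Delta t=T$ together with the Gronwall estimate $L_{F,x}^{N}\le e^{\,TL_{f,x}C}$ prevents the final constant from blowing up as the time grid is refined.
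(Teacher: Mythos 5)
The paper does not actually prove this proposition: it states the bound and defers both the proof and the exact expression of $c_1(N)$ to \cite{Bok-Gam-Zid-2022}, so there is no in-paper argument to compare yours against. Your overall strategy (explicit sensitivity constants $\beta_k,\gamma_k\le\kappa\,\Delta t\,L_{F,x}^{N-1-k}$, a balancing property of the greedy splitting rule \eqref{443}, the product identity $\prod_k d_{i,k}=D^N M^{-p}$, and the Gronwall-type bound $L_{F,x}^{N}\le e^{TL_{f,x}C}$) is the natural one and is consistent with the optimistic-planning literature the paper points to.

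There is, however, a genuine gap at the final step: your chain of estimates yields $\sigma_i\le\sum_k w_k\le N\,M\,D\,(\prod_k\beta_k)^{1/N}M^{-p/N}\le M D\kappa\,(N\Delta t)\,L_{F,x}^{(N-1)/2}M^{-p/N}$, i.e.\ a bound of the form $\mathrm{const}\cdot M^{-p/N}$ with the constant independent of $N$ --- but \emph{not} of the form $c_1(N)\,\Delta t\,M^{-p/N}$ with $c_1(N)$ bounded independently of $N$. Using $N\Delta t=T$ to ``absorb'' the factor $N$ removes the $\Delta t$ rather than preserving it: written in the statement's normalization your constant is $c_1(N)=O(N)$, which contradicts the claimed uniform boundedness. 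Since for the balanced allocation all $N$ terms $w_k$ are of comparable size $\Theta(\Delta t\,M^{-p/N})$, the loss in $\sum_k w_k\le N\max_k w_k$ is not an artifact of crude bookkeeping; to recover the stated prefactor one would need an additional mechanism (e.g.\ a sharper structure of the $\beta_k$ or of the split allocation than what you derive), and your proposal does not supply one. Separately, the balancing lemma as you state it ($\max_k w_k\le M\min_k w_k$) is not quite what the greedy rule gives: the correct invariant is $\max_j w_j\le M\,w_k$ for every coordinate $k$ that has been split at least once, with never-split coordinates handled separately; for $p$ large enough every coordinate is eventually split, so this is repairable, but you flag it as the main obstacle and do not prove it, and the geometric-mean step $\min_k w_k\le(\prod_k w_k)^{1/N}$ silently relies on the minimum being taken over all coordinates. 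Finally, the hypothesis $M>L_{F,x}$ is never actually used in a load-bearing way in your argument, which suggests the intended proof proceeds along a somewhat different accounting of the error than the one you propose.
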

We refer to \cite{Bok-Gam-Zid-2022} for the proof of this result and for the exact expression of the constant
$c_1(N)$. 

%
Now, we will present the rules for refining the search of an optimal control strategy. In the first algorithm, at each iteration,  the node $\U_{i^*}$ minimizing the lower bound ($J(x,z,u_i)-\sigma_i$) will be selected and split to $M$ children. More precisely, we identify an interval  $U_{i^*,k^*_{i^*}}$ whose partition in $M$ sub-intervals will produce the lowest error  $\sigma_{i^*}(k^*_{i^*})$.

\begin{algorithm*}[!hbtp]
\caption{{\bf Optimistic Planning (OP)}
  \label{alg:OP}
}
\begin{algorithmic}[1]
\REQUIRE 
 The number of intervals $N$, the split factor $M$, the maximal number of expanded nodes $I_{\max}$
\STATE 
 Initialize $\Upsilon$ with a root $\U_0:=U^N$ and $n=0$ ($n:=$ number of expanded nodes).
\WHILE{$n < I_{\max}$}
\STATE 
 Select an optimistic node to expand: $\U_{i^*}\in \underset{\U_i\in \Lambda}{\text{argmin}} \hspace{0.1cm} (J(x,z,\bu_i)-\sigma_i)$. 
\STATE 
Select $k^*_{i^*}$, defined in \eqref{443}, the interval to split for the node $\U_{i^*}$.
\STATE 
Update $\Upsilon$ by expanding $\U_{i^*}$ along $k^*_{i^*}$ and adding its $M$ children.
\STATE 
Update $n=n+1$.
\ENDWHILE
\RETURN 
Control sequence $\bu_{i^*}=(u_{i^*,k})_k\in  U^N$ of the node $\U_{i^*} \in \underset{\U_i\in \Lambda}{\text{argmin}}  \hspace{0.1cm}  J(x,z,\bu_i)$.
\end{algorithmic}
\end{algorithm*}

\begin{thm}
\label{th442}
Assume that $M>L_{F,x}>1$.
Let $\bu_{i^*}$ and $J(x,z,\bu_{i^*})$ be the output of the OP\ algorithm, 
and let $n\geq 1$ be the corresponding number of expanded nodes.
We have
  \begin{equation} \label{eq:err-bound-OP}
  0 \leq J(x,z,\bu_{i^*})-W(x,z) 
\longrightarrow 0, \mbox{ when } n\to+\infty.  
\end{equation}
\end{thm}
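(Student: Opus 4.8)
The plan is to sandwich the suboptimality of the returned control $\bu_{i^*}$ between $0$ and the error term $\sigma$ of the node expanded at the final iteration, and then to force that error term to vanish by showing that the expanded nodes are driven to arbitrarily large depth. The lower inequality is immediate: since $W(x,z)=\inf_{\bu\in U^N}J(x,z,\bu)$ and $\bu_{i^*}\in U^N$, we have $W(x,z)\leq J(x,z,\bu_{i^*})$, which gives the left-hand side of \eqref{eq:err-bound-OP}.

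For the upper bound, let $\U_{i_n}$ denote the node expanded at the $n$-th iteration, that is, the leaf minimizing the lower bound $J(x,z,\bu_i)-\sigma_i$. By Proposition~\ref{prop441}, at every stage of the construction there is always a leaf $\U_\star\in\Lambda$ whose box contains an optimal control sequence, and which therefore satisfies $J(x,z,\bu_\star)-\sigma_\star\leq W(x,z)$. Because $\U_{i_n}$ minimizes the lower bound over all current leaves, I would deduce $J(x,z,\bu_{i_n})-\sigma_{i_n}\leq J(x,z,\bu_\star)-\sigma_\star\leq W(x,z)$. Moreover, since $M$ is odd and the sample controls are taken at the interval midpoints, the middle child created when $\U_{i_n}$ is split carries exactly the sample sequence $\bu_{i_n}$; hence the returned value obeys $J(x,z,\bu_{i^*})\leq J(x,z,\bu_{i_n})$. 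Combining these two facts yields
\[
0\leq J(x,z,\bu_{i^*})-W(x,z)\leq \sigma_{i_n}.
\]

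The crux is then to prove $\sigma_{i_n}\to 0$ as $n\to\infty$, which I would do by showing that the depth $p_{i_n}$ of the expanded node diverges. Fix $P\in\N$. The number of distinct nodes of depth at most $P$ in the tree $\Upsilon$ is finite, since each such node is specified by finitely many split choices distributed among the $N$ coordinates; and each node is expanded at most once during the run. Consequently only finitely many iterations can expand a node of depth $\leq P$, so there is an index beyond which every expanded node has depth strictly larger than $P$. As $P$ is arbitrary, $p_{i_n}\to\infty$. Invoking Proposition~\ref{th441} (applicable because $M>L_{F,x}>1$), for $n$ large we obtain $\sigma_{i_n}\leq\delta_{p_{i_n}}=c_1(N)\,\Delta t\,M^{-p_{i_n}/N}\to 0$, and the sandwich above gives \eqref{eq:err-bound-OP}.

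The main obstacle is this last step: rigorously establishing that the successive expanded nodes have depths tending to infinity. The delicate points are to confirm that each node of the tree is expanded at most once and that the depth-$\leq P$ portion of $\Upsilon$ is genuinely finite, so that the ``finitely many shallow expansions'' argument is valid, and to verify that the midpoint/odd-$M$ mechanism really does transfer the optimistic lower bound of the expanded node $\U_{i_n}$ to the returned control $\bu_{i^*}$. The quantitative decay rate $\delta_p$ itself is supplied by Proposition~\ref{th441}, so no new estimate needs to be derived there; the work lies entirely in the combinatorial/topological control of the tree's growth.
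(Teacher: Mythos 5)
The paper states Theorem~\ref{th442} without proof (the supporting estimates, and the full argument, are deferred to the cited reference), so there is no in-text proof to compare against. Your argument is correct and is the standard optimistic-planning convergence proof: the sandwich $0\leq J(x,z,\bu_{i^*})-W(x,z)\leq \sigma_{i_n}$ obtained from Proposition~\ref{prop441}, the optimistic selection rule, and the midpoint/odd-$M$ middle-child property, followed by the observation that each node is expanded at most once and the tree has at most $(M^{P+1}-1)/(M-1)$ nodes of depth $\leq P$, so expanded depths diverge and Proposition~\ref{th441} forces $\sigma_{i_n}\leq \delta_{p_{i_n}}\to 0$. The steps you flag as delicate are all sound as you state them.
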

In Algorithm (OP), the number $I_{\max}$ represents a maximal available computational resource. The number of expanded nodes corresponds to the number of iterations,
since at each iteration only one node is expanded. 
Other  optimistic planning methods can be considered.
For instance, the {\em simultaneous optimistic planning} (SOP)  algorithm or {\em simultaneous optimistic planning with multiple steps} (SOPMS) algorithm that 
expand at each iteration several  nodes at every iteration, see \cite{Bok-Gam-Zid-2022}.

\paragraph{Test 5.} To show the relevance of this approach, we consider a variant of Zermelo problem where a boat targets the set $\mathcal{C}:=\mathbb{B}(0;0.1)$, 
at time $T=1$,  
with minimal fuel consumption. The dynamics is similar to the one considered in \eqref{eq:dyn_zermelo_Unconstrained}.
We consider also two rectangular obstacles with  horizontal and vertical half lengths 
$(r_x,r_y)$. The  first obstacle is centered at $(-2.0,0.5)$   with 
$(r_x,r_y)=(0.4,0.4)$,  and the second obstacle is centered at  $(-2.5,-1)$  with 
        $(r_x,r_y)=(0.2,1)$.
To take into account   the pointwise and final state constraints, we define the functions $g$ and $g_f$ by
\begin{equation*}
  g(x):= \Big( 0.4 - \|x-(-2,0.5)\|_{\infty} \Big) \bigvee \min\big( 0.2 - |x_1 +2.5|, 1 - |x_2+1|\big) 
   \quad \mbox{and} \quad g_f(x) := \|x\|_\infty - 0.1. 
\end{equation*}
For a given $N$,  the discrete control problem  becomes:
\begin{equation*}
  \begin{array}{l}
  V(x)={\inf} \Big\{ 
      \frac1N\sum_{k=0}^{N-1} u_{1,k}     \quad \mbox{with} \quad \bu=((u_{1,k},u_{2,k}))_k \in U^N, \\
      \hspace*{3cm}
       g(y^{x,\bu}_k)\leq 0\ \ \ \mbox{for } k=0,\cdots,N, \quad \mbox{and } \ \  g_f(y^{x,\bu}_N)\leq 0\Big\},   
 \end{array}
\end{equation*}
where $(y^{x,\bu}_k)_k$ is the discrete state variable, corresponding to the control policy 
$\bu \in U^N$,  and starting at the initial position $x$, while the discrete auxiliary value function is defined as 
\begin{equation*}
  W(x,z) = \underset{\bu \in U^N}{\inf} \Big\{ \Big(\frac1N\sum_{k=0}^{N-1}u_{1,k}  -z \Big)  \bigvee
    \Big(\underset{0\leq k\leq N}{\max}\hspace{0.1cm} g(y^{x,\bu}_k)\Big)\bigvee g_f(y^{x,\bu}_N) \Big\}.
\end{equation*}

Figure~\ref{fig:zermelo_unconstrained_OP_traj} displays optimal  trajectories obtained  from three different initial positions.
A simultaneous optimistic planning algorithm is used for this simulation with $I_{\max}=3200$ and $N=40$. The optimal controls are displayed on Figure~\ref{fig:zermelo_unconstrained_OP_Co}.
\begin{figure}[!htbp]
    \centering
    \includegraphics[width=0.7\textwidth]{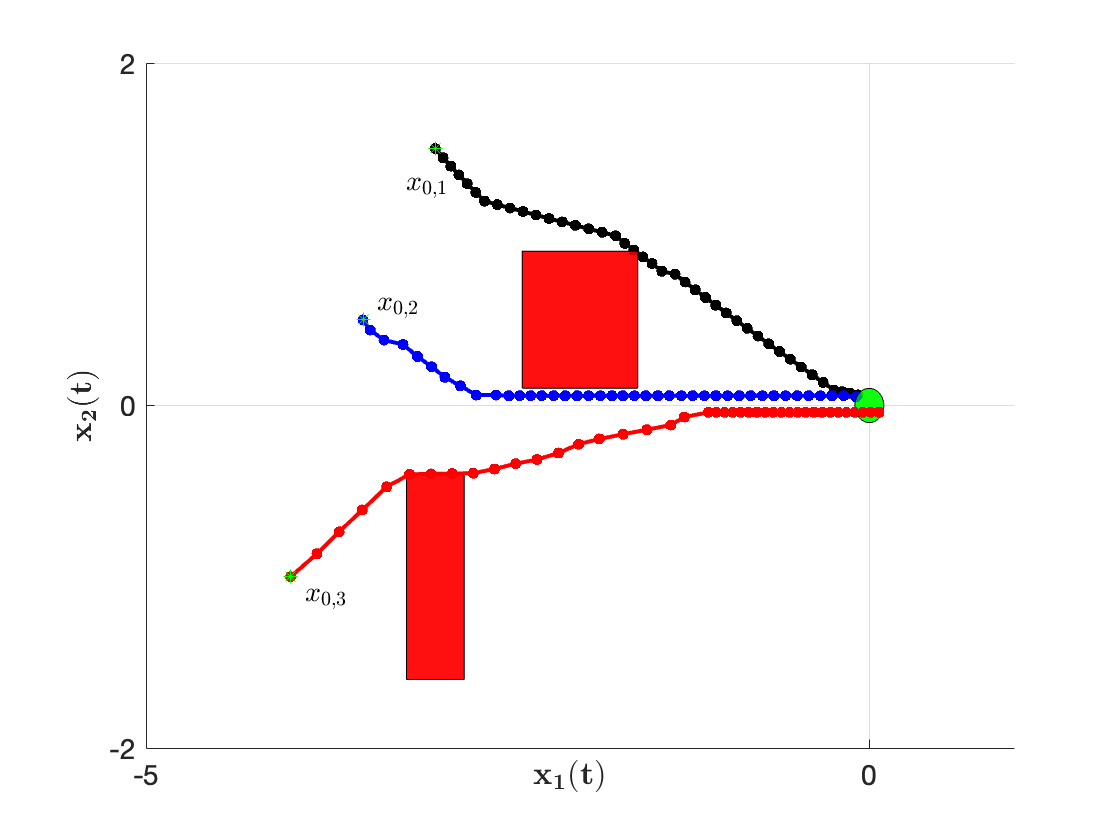} 
     \caption{(Test 5) An optimistic planning approach with  $N=40$. Optimal trajectories corresponding to three different initial data. The trajectory in black corresponds to the initial position $x_{0,1}=(-3,1.5)$, the trajectory in blue corresponds to $x_{0,2}=(-3.5,0.5)$, and the trajectory in red corresponds to $x_{0,3}=(-4,-1)$ }\label{fig:zermelo_unconstrained_OP_traj}
\end{figure}
\begin{figure}[!hbpt]
    \centering
    \includegraphics[width=0.7\textwidth]{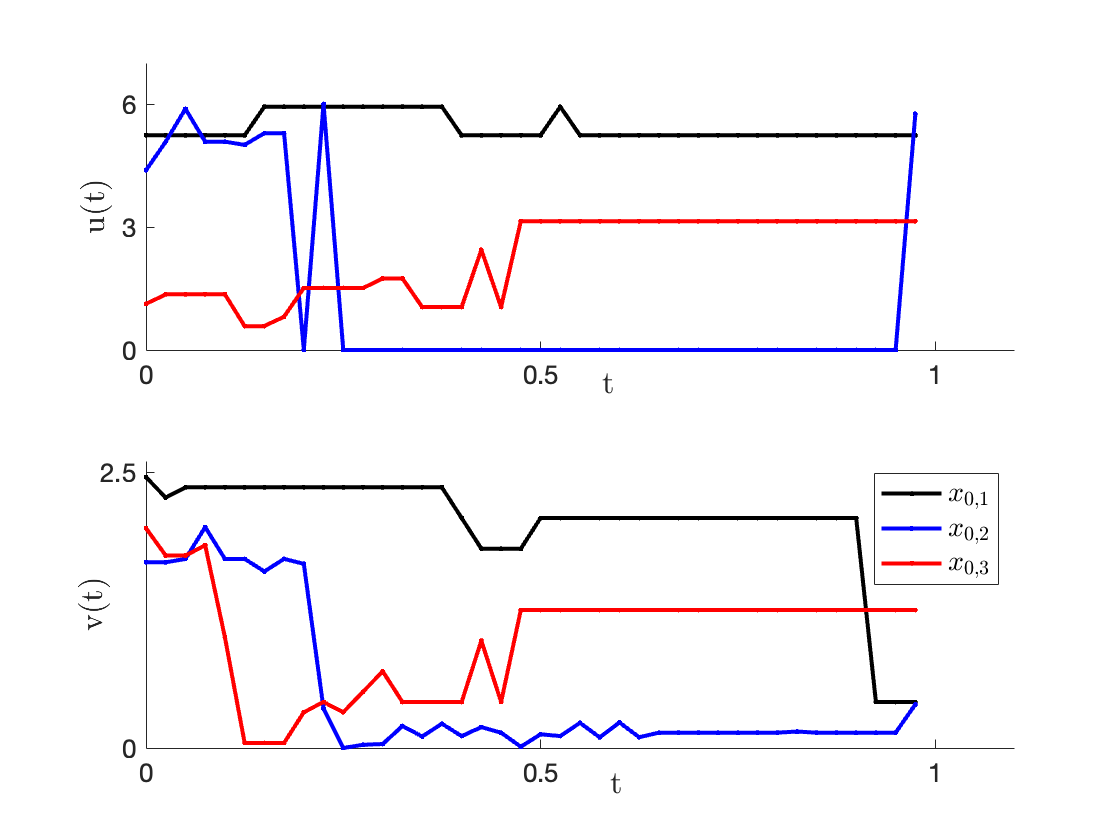}
     \caption{(Test 5)  An optimistic planning approach with  $N=40$. Control laws corresponding to three different initial positions. The control law in black corresponds to the initial position $x_{0,1}=(-3,1.5)$, the control laws in blue corresponds to $x_{0,2}=(-3.5,0.5)$, and the control laws in red corresponds to $x_{0,3}=(-4,-1)$ }\label{fig:zermelo_unconstrained_OP_Co}
\end{figure}
In this example, we can see that the  optimal trajectories, computed by an OP approach, reach the target and avoid the obstacles. The main feature of the OP approaches is the fact that they give an approximation of the global solution. For a fixed value of $N$, the complexity of these (global) approaches depends on the dimension of the control and not on the dimension of the space variable. The complexity increases also when the discretization is refined (i.e., when $N$ increases).  As pointed out in the literature \cite{ref51,ref52,Bok-Gam-Zid-2022}, the accuracy of the numerical solutions, obtained by OP methods, depends on the available numerical resources $I_{\max}$. The convergence results derived in the literature give some hints on how to choose $I_{\max}$ to obtain a given precision, but this question deserves further analysis.





\small
\bibliographystyle{abbrv}
\bibliography{bibmain,bibcomp,Biblio}


\end{document}